\numberwithin{equation}{section}
\theoremstyle{plain}\newtheorem{thm}{Theorem}[section]
\theoremstyle{plain}\newtheorem{cor}[thm]{Corollary}
\theoremstyle{plain}\newtheorem{lem}[thm]{Lemma}
\theoremstyle{plain}\newtheorem{defn}[thm]{Definition}
\theoremstyle{plain}\newtheorem{prop}[thm]{Proposition}
\theoremstyle{plain}\newtheorem*{thm*}{Theorem}
\theoremstyle{remark}\newtheorem{rmk}[thm]{Remark}
\newcommand{\spinc}{\text{spin}^c}
\newcommand{\Spinc}{\text{Spin}^c}
\newcommand{\Hfrom}{\widehat{\mathit{HM}}}
\newcommand{\Hto}{\widecheck{\mathit{HM}}}
\newcommand{\Harrow}{\overrightarrow{HM}}
\newcommand{\Hred}{\mathit{HM}}
\newcommand{\id}{\textrm{id}}
\newcommand{\frs}{\mathfrak{s}}
\newcommand{\frt}{\mathfrak{t}}
\newcommand{\CSD}{\mathcal{L}}
\newcommand{\CSDpert}{\rlap{$-$}\CSD}
\newcommand{\moduli}[1]{\mathcal{C}_{#1}(X,\frs)}
\newcommand{\bC}{\mathbb{C}}
\newcommand{\bS}{\mathbb{S}}
\DeclareMathOperator{\inj}{inj}
\DeclareMathOperator{\Ric}{Ric}
\DeclareMathOperator{\supp}{supp}
\DeclareMathOperator{\Spin}{Spin}
\DeclareMathOperator{\SO}{SO}
\DeclareMathOperator{\SU}{SU}
\DeclareMathOperator{\UU}{U}
\DeclareMathOperator{\Hom}{Hom}
\DeclareMathOperator{\End}{End}
\DeclareMathOperator{\Vol}{Vol}
\title[Monopoles and foliations]{Monopoles and foliations without holonomy-invariant transverse measure}
\author{Boyu Zhang}
\begin{document}

\begin{abstract}
This article proves a uniform exponential decay estimate for Seiberg-Witten equations on non-compact 4-manifolds with exact symplectic ends of bounded geometry. This is an extension of the analysis for asymptotically flat almost K\"ahler (AFAK) structures by Kronheimer and Mrowka \cite{kronheimer1997monopoles}. As an application, we construct an invariant for smooth foliations without holonomy-invariant transverse measure, which takes value in the boundary-stable version of the monopole Floer homology group introduced by Kronheimer and Mrowka \cite{kronheimer2007monopoles}, without invoking the Eliashberg-Thurston perturbation.
\end{abstract}
\maketitle
\section{Introduction}

\subsection{Seiberg-Witten equations}
Let $(M,g)$ be an oriented Riemannian 4-manifold possibly with boundary. The manifold $M$ is allowed to be empty.
We say that $M$ is \emph{cylindrical}, if 
$M$ is isometric to $(-\infty,0]\times Y$
 for a closed, oriented Riemannian 3-manifold $Y$. We say that $M$ is endowed with an \emph{exact symplectic structure with bounded geometry} (or an \emph{ESBG structure}), if there is an exact symplectic form $\omega=d\theta$ on $M$, such that the following conditions hold:
 \begin{enumerate}
\item $\omega$ is compatible with $g$. In other words, $\omega$ is a self-dual 2-form with norm $\sqrt{2}$ under the metric $g$;
\item $\partial M$ is compact, $(M,g)$ is complete as a metric space;
\item The injectivity radius of $M$ has a positive lower bound on $M-N(\partial M)$, where $N(\partial M)$ is a tubular neighborhood of $\partial M$ with compact closure;
\item Let $R$ be the Riemann curvature tensor of $M$, let $\nabla$ be the Levi-Civita connection, then $\sup_M|\nabla^k R|<+\infty$ for every $k\ge 0$;
\item $\sup_M|\nabla^k \theta|<+\infty$ for every $k\ge 0$.
\end{enumerate}
If $M$ is empty, then by definition, $M$ is both cylindrical and has an (empty) ESBG structure.

Suppose $(M,g)$ is endowed with an $ESBG$ structure by the symplectic form $\omega=d\theta$, then $(\omega,g)$ induces an almost complex structure on $M$, and there is a \emph{canonical $\spinc$ structure} over $M$ such that
\begin{align*}
\mathbb{S}^+ &=T^{0,0}\, M \oplus T^{0,2}\, M, \\
\mathbb{S}^- &= T^{0,1}\, M.
\end{align*}
We will denote the canonical $\spinc$ structure by $\frs_{M,\omega}$.
Let $\Phi_0$ be the section of $\mathbb{S}^+$ given by $1\in\Gamma(M,T^{0,0}\,M)$, let $D$ be the Dirac operator, then there exists a unique $\spinc$ connection $A_0$ on $\frs_{M,\omega}$ such that $D_{A_0} \Phi_0=0$. We call $A_0$ the \emph{canonical $\spinc$ connection} on $\frs_{M,\omega}$. For more details on the canonical $\spinc$ structure and the canonical $\spinc$ connection, see, for example,  \cite[Sections 4.2-4.3]{hutchings1997introduction}.

\begin{defn}\label{def_cylindrical_ESBG_end}
Suppose $X$ is a complete oriented Riemannian 4-manifold. We say that \emph{$X$ has cylindrical and ESBG ends}, if there are two 4-dimensional submanifolds with boundary $M_c, M_s\subset X$ and an exact symplectic form $\omega=d\theta$ on $M_s$, such that the following holds. 
\begin{enumerate}
\item $M_s$ and $M_c$ are disjoint closed subsets of $X$, and the closure of $X-M_c-M_s$ is compact. $M_c$ and $M_s$ are allowed to be empty.
\item $M_c$ is cylindrical, and $\omega=d\theta$ is an ESBG structure on $M_s$.
\item If $M_s$ is nonempty, then the ESBG structure on $M_s$ can be extended to a neighborhood of $M_s$.
\item If $M_c$ is nonempty, then the cylindrical structure on $M_c$ can be extended to a neighborhood of $M_c$. Namely, there exists a neighborhood $N(M_c)$ of $M_c$ and a closed oriented Riemannian 3-manifold $Y$, such that $N(M_c)$ is isometric to $(-\infty,\epsilon]\times Y$ for $\epsilon>0$, where $M_c\subset N(M_c)$ is mapped to $(-\infty,0]\times Y$.
\end{enumerate}
\end{defn}

\begin{defn}\label{def_admissible_spinc_structure}
Let $X$ be a Riemannian 4-manifold with cylindrical and ESBG ends, where $M_c$ is the cylindrical end, and $(M_s,\omega=d\theta)$ is the ESBG end.
We say that a $\spinc$ structure $\mathfrak{s}$ on $X$ is \emph{admissible}, if there is an isomorphism from $\frs|_{M_s}$ to the canonical $\spinc$ structure $\frs_{M_s,\omega}$.
\end{defn}
\begin{rmk}
To simplify  notation, if $\mathfrak{s}$ is an admissible $\spinc$ structure on $X$, we will always assume that there is a fixed isomorphism from $\mathfrak{s}|_{M_s}$ to $\frs_{M_s,\omega}$, and we will identify the positive and negative spinor bundles of $\mathfrak{s}|_{M_s}$ with $T^{0,0}M_s \oplus T^{0,2}M_s$ and $T^{1,1}M_s$ respectively.
\end{rmk}

Let $\frs$ be an admissible $\spinc$ structure on $X$, let $\mathbb{S}=\mathbb{S}^+\oplus \mathbb{S}^-$ be the corresponding spinor bundle, and let $\rho$ be the Clifford multiplications.
Let $r>0$ be a constant; later we will require $r$ to be sufficiently large. 
This article studies a system of perturbed Seiberg-Witten equations on $X$ given by 
\begin{equation}\label{SW_intro}
\begin{cases}
D_A\phi = \eta_1,\\
F_A^+ = (\phi \phi^*)_0+\eta_2,
\end{cases}
\end{equation}
where $(\eta_1,\eta_2)= (0,-ir\omega/4 + F_{A_0}^+)$ on $M_s$, and $(\eta_1,\eta_2)$ is given by a tame perturbation introduced by \cite[Section 10]{kronheimer2007monopoles} on $M_c$. The precise definition of \eqref{SW_intro} will be given
in Section \ref{sec: SW equations}.

Let $A$ be $\spinc$ connection of $\frs$, and let $\phi$ be a section of $\bS^+$. Recall that we have chosen a fixed isomorphism from $\frs|_{M_s}$ to $\frs_{M_s,\omega}$ and use it to identify their spinor bundles, so we have $\mathbb{S}^+|_{M_s}= T^{0,0}M_s \oplus T^{0,2}M_s$, where the almost complex structure on $M_s$ is induced by $\omega$. Decompose $\phi|_{M_s}$ as
$$
\phi|_{M_s}= \sqrt{r} (\alpha+\beta),
$$
where $\alpha\in T^{0,0}M_s$, $\beta\in T^{0,2}M_s$.
 Let  $\nabla_A'$ be the projection of $\nabla_A|_{M_s}$ to $T^{0,2}M_s$. More precisely, $\nabla_A'$ is a connection of $T^{0,2}M_s$ such that for every section $s\in \Gamma(M_s,T^{0,2}M_s)$, the section $\nabla_A's$ is equal to the projection of $\nabla_A s$ to $T^{0,2}M_s$. Let $A_0$ be the canonical $\spinc$ connection on $\frs|_{M_s}$, then there exists a unique function $a$ on $M_s$ which takes values in $i\mathbb{R}$ such that $A|_{M_s}-A_0 = a \cdot \id_{\bS^+\oplus \bS^-}$. To simplify  notation, we will write $a=A|_{M_s}-A_0$ for the rest of the article. Notice that $a$ defines a unitary connection on the trivial bundle $T^{0,0}M_s$. We define the \emph{energy density function} of $(A,\phi)$ on $M_s$ as:
\begin{equation}
\label{eqn_def_Er(A,phi)}
E_r(A,\phi)=|1-|\alpha|^2-|\beta|^2|^2+|\beta|^2+|\nabla_a \alpha|^2+|\nabla_A'\beta|^2 + |F_a|^2.
\end{equation}

Define a function $d$ on $M_s$ as follows.
For each connected component $M_s^{(k)}$ of $M_s$, if $\partial M_s^{(k)}$ is nonempty, let $d$ on $M_s^{(k)}$ be the distance function to $\partial M_s^{(k)}$. Otherwise, fix a point $x^{(k)}\in M_s^{(k)}$, and let $d$ on $M_s^{(k)}$ be the distance function to $x^{(k)}$.
The main analytic result of this article is the following estimate.
\begin{thm} \label{thm_uniform_exponential_decay}
Let $X$ be a Riemannian 4-manifold with cylindrical and ESBG ends, where $M_c$ is the cylindrical end, and $(M_s,\omega=d\theta)$ is the ESBG end.
Then there exists a constant $r_0>0$ and a constant $z$, such that for every admissible $\spinc$ structure $\frs$ and every $r>r_0$, there exists a constant $C$ depending on $r$, with the following significance. If $(A,\phi)$ is a solution to \eqref{SW_intro} on $X$ such that 
$$\int_{M_s} E_r(A,\phi) <+\infty,$$
 then
\begin{equation}\label{eqn_uniform_exp_decay_intro}
 E_r(A,\phi)< C \,e^{-\sqrt{r}\cdot d/z}
\end{equation}
pointwise on $M_s$.
\end{thm}

Theorem \ref{thm_uniform_exponential_decay}  will be re-stated and proved  in Section \ref{thm: uniform decay}. The theorem is an extension of the analysis on asymptotically flat almost K\"ahler (AFAK) structures by Kronheimer and Mrowka \cite[Section 3(iii)]{kronheimer1997monopoles}. This estimate implies that the zero-dimensional component of a relevant moduli space of solutions to Seiberg-Witten equations on $X$ is compact, therefore one can define topological invariants for $X$ by counting the solutions of the Seiberg-Witten equations. In Section \ref{sec_Floer_chains}, we will follow the strategies of \cite{kronheimer2007monopolesAndLens} and construct an invariant for $X$ which takes value in the monopole Floer homology group.

\subsection{Taut foliations}
\label{subsec_taut_foliations}
Theorem \ref{thm_uniform_exponential_decay} and the construction in Section \ref{sec_Floer_chains} can be applied to the study of taut foliations.

Let $Y$ be a smooth, closed, oriented three-manifold, let $\mathcal{F}$ be an oriented foliation on $Y$. By definition, $\mathcal{F}$ is called \emph{taut} if the following condition is satisfied: for every point $p \in Y$, there exists an embedded circle in $Y$ that contains $p$ and is transverse to $\mathcal{F}$.

One of the fundamental problems in the study of taut foliations is their existence on a given 3-manifold. By the Roussarie-Thurston theorem, if $Y$ supports a taut foliation, then every embedded sphere in $Y$ is either nullhomotopic or is isotopic to a leaf. Reeb's stability theorem then implies that if $Y$ supports a taut foliation, then $Y$ is either irreducible, or is homeomorphic to $S^2\times S^1$ with the product foliation. Gabai \cite{gabai1983foliations} proved that every irreducible, closed, oriented three-manifold with $b_1\ge 1$ supports a taut foliation. The existence problem for taut foliations on irreducible manifolds with $b_1=0$ is still unsolved. It was proved in \cite{kronheimer2007monopolesAndLens} that if $Y$ is a rational homology sphere supporting a smooth taut foliation, then the reduced monopole Floer homology group $\Hred_{\bullet}(Y)$ must be nontrivial. This implies, for example, that the lens spaces do not support any smooth taut foliations except for $S^1\times S^2$.  The theorem was generalized to $C^{0}$-taut foliations by Bowden \cite{bowden2015approximating}. 

The flexibility of taut foliations has also been studied for years.  Eynard-Bontemps \cite{eynard2016connectedness} proved that if two taut foliations can be homotoped to each other via plane fields, then they can be homotoped to each other via foliations. On the other hand, Vogel \cite{vogel2013uniqueness} and Bowden \cite{bowden2016contact} have constructed examples of taut foliations that are homotopic as plane fields but cannot be homotoped to each other via taut foliations.

The proofs of the non-vanishing and non-flexibility results above rely on the following perturbation theorem, which is due to Eliashberg and Thurston for $C^2$ foliations and is generalized by Bowden to the $C^{0}$ case:

\begin{thm*}[Eliashberg-Thurston \cite{eliashberg1998confoliations}, Bowden \cite{bowden2015approximating}] \label{Thm:EliashThurs}
Let $\mathcal{F}$ be an orientable $C^{0}$ foliation on an oriented 3-manifold $Y$, and assume $(Y,\mathcal{F})$ is not homeomorphic to the product foliation on $S^2\times S^1$. Then $\mathcal{F}$ can be $C^0$ approximated both by a sequence of positive contact structures and a sequence
of negative contact structures. If $\mathcal{F}$ is taut, then the positive contact approximations are weakly semi-fillable.
\end{thm*}

If $Y$ supports taut foliations, then the non-vanishing of $\Hred_{\bullet}(Y)$ was proved by the theorem above and the non-vanishing property of semi-fillable contact structures \cite[Section 6.4]{kronheimer2007monopolesAndLens}. The examples in \cite{vogel2013uniqueness, bowden2016contact} were proved by first showing that the perturbed contact structures are unique up to isotopy, and then showing that the isotopy classes of the corresponding contact structures are different.\\

As an application of Theorem \ref{thm_uniform_exponential_decay}, if $\mathcal{F}$ is a smooth foliation on $Y$ that does not admit holonomy-invariant transverse measures,
we will construct two invariants $c_+(\mathcal{F})\in\Hto_\bullet(Y)$ and $c_-(\mathcal{F})\in\Hto_\bullet(-Y)$ without invoking the Eliashberg-Thurston perturbation. Here, $\Hto_\bullet(\cdot)$ is the boundary-stable version of the monopole Floer homology introduced by \cite{kronheimer2007monopoles}. We will then apply the invariants to the study of the existence and flexibility of taut foliations.

Notice that $\mathcal{F}$  is taut if and only if there exists a \emph{closed} 2-form $\hat\omega$ on $Y$, such that $\hat\omega$ is everywhere positive on the tangent plane field of $\mathcal{F}$ \cite[Proposition 10.4.1]{candel2000foliations}.
On the other hand, $\mathcal{F}$ has no holonomy-invariant transverse measure if and only if there exists an \emph{exact} 2-form $\hat \omega$ on $Y$, such that $\hat\omega$ is everywhere positive on the tangent plane field of $\mathcal{F}$ \cite[Theorem II.2]{sullivan1976cycles}. Therefore, if $Y$ is a rational homology sphere, then a foliation $\mathcal{F}$ is taut if and only if it has no holonomy-invariant transverse measure. 

In Section \ref{sec: monopole floer theory}, we will construct the invariants $c_+(\mathcal{F})$ and $c_-(\mathcal{F})$,  and show that the gradings of  $c_+(\mathcal{F})\in \Hto(Y)$ and $c_-(\mathcal{F})\in \Hto(-Y)$ are given by the homotopy classes of $\mathcal{F}$ as plane fields on $Y$ and $-Y$ respectively. We will also show that $c_\pm(\mathcal{F})$ have nonzero images in the reduced monopole Floer homology groups under the map $j_*$ defined by \cite[Proposition 22.2.1]{kronheimer2007monopoles}. Therefore, the existence of $c_+(\mathcal{F})$ gives an alternative proof for the nonvanishing theorem of $\Hred_{[\mathcal{F}]}(Y)$ for smooth taut foliations on rational homology spheres \cite[Theorem 2.1]{kronheimer2007monopolesAndLens}. 

The invariants $c_\pm(\mathcal{F})$ can also be used to study the flexibility of foliations. In Section \ref{sec:applications}, we will construct smooth foliations without holonomy-invariant transverse measure that are homotopic as plane fields but have different invariants $c_+$. Since $c_\pm(\mathcal{F})$ are invariant under smooth deformations, this gives examples of smooth foliations that are homotopic as plane fields but cannot be smoothly deformed to each other via foliations without holonomy-invariant transverse measure.

It should be pointed out that if $\mathcal{F}$ is a smooth foliation without holonomy-invariant transverse measure, then there exist linear deformations of $\mathcal{F}$ to both positive and negative contact structures \cite[Theorem 2.1.2]{eliashberg1998confoliations}. It is straightforward to verify that the space of all positive (negative) linear deformations is convex, so the contact structures obtained by linear deformations are unique up to isotopy. Therefore, the contact elements of the linearly deformed contact structures also give two invariants for $\mathcal{F}$ in the monopole Floer homology groups. The relation between $c_{\pm}(\mathcal{F})$ and the corresponding contact invariants is not clear to the author.

\subsection{Acknowledgements}
I would like to express my most sincere gratitude to my Ph.D. advisor, Clifford Taubes, for his patient guidance and encouragement. I would like to thank Peter Kronheimer and Tomasz Mrowka for helping me understand their work. I would like to thank Jonathan Bowden, Dan Cristofaro-Gardiner, Amitesh Datta, Mariano Echeverria, Chris Gerig, Jianfeng Lin, Cheuk Yu Mak, Jiajun Wang, and Yi Xie for many helpful discussions. Finally, I want to thank the anonymous referee for reading the manuscript carefully and giving numerous valuable suggestions.

\section{The Seiberg-Witten equations}\label{sec: SW equations}
This section briefly reviews the definition of Seiberg-Witten equations, and introduces a perturbation  on manifolds with cylindrical and ESBG ends. We will follow the notations from \cite{kronheimer2007monopoles}. The reader may refer to \cite{morgan1996seiberg, hutchings1997introduction} for more details.
\subsection{$\Spinc$ Structures}
For $n\ge 2$, let $\Spin(n)$ be the connected double cover of $\SO(n)$. Let  $\Spinc(n) = \big(\UU(1)\times \Spin(n)\big)/\{\pm1\}$, where $1\in \UU(1)\times \Spin(n)$ is the unit element, and the two coordinates of $-1\in \UU(1)\times \Spin(n)$ are given by $-1\in \UU(1)$ and the non-trivial element in the preimage of $1\in \SO(n)$. Let $X$ be an oriented Riemannian 4-manifold. By definition, a $\spinc$ structure $\frs$ on $X$ is a principal $\Spinc(4)$--bundle
which is a lift of the oriented orthonormal frame bundle via the surjection
$$
\Spinc(4) =\big( \UU(1)\times \Spin(4)\big)/\{\pm1\}\to \Spin(4)/\{\pm1\} \cong \SO(4).
$$

 The group $\Spinc(4)$ has a standard unitary representation on $\mathbb{C}^4$. Suppose $\frs$ is a $\spinc$ structure on $X$, then the spinor bundle of $\frs$ is defined as $\bS=\frs\times_{\Spinc(4)}\bC^4$. There is a Clifford multiplication $\rho:T^*X\to\Hom (\bS,\bS)$ which satisfies $\rho(v)^2=-\|v\|^2$ for all $v\in T^*X$, and the action $\rho$ extends to $\wedge^* T^*M$. 
 Let $d\Vol$ be the volume form of $X$, then $\rho(d\Vol)^2=\id|_{\bS}$. 
 Let $\bS^+$ and $\bS^-$ be the eigenspaces of $\rho(d\Vol)$ with eigenvalues $-1$ and $1$ respectively, then both $\bS^+$ and $\bS^-$ have rank $2$. Let $\Lambda^+(X)$ be the vector bundle of self-dual 2-forms on $X$, and let $\End_0(\bS^+)$ be the traceless endomorphisms of $\bS$, then $\rho$ maps $\Lambda^+(X)\otimes\mathbb{C}$ isomorphically to $\End_0(\bS^+)$.

A unitary connection $A$ on ${\bS}$ is called a \emph{$\spinc$ connection} if $\nabla_A \rho=0$, where $\nabla_A$ is the coupled connection of $A$ and the Levi-Civita connection on $TX\otimes \Hom(\bS,\bS)$.
Every $\spinc$ connection decomposes as two unitary connections on $\bS^+$ and $\bS^-$, and the connection on $\bS^+$ induces a connection on $\det(\bS^+)$. We use $A^t$ to denote the connection on $\det(\bS^+)$ induced by $A$, and use $D_A$ to denote the Dirac operator defined by $A$.

The definition of $\spinc$ structures on 3-manifolds is similar. 
A $\spinc$ structure on an oriented Riemannian 3-manifold $Y$ is a principal $\Spinc(3)$--bundle which is a lift of the oriented orthonormal frame bundle. Notice that $\Spinc(3) = \SU(2)\times\UU(1)/\{\pm1\} \cong \UU(2)$.  If $\frt$ is a $\Spinc$ structure on a 3-manifold $Y$, then the spinor bundle of $\frt$ is defined as $\bS=\mathfrak{t}\times_{\UU(2)}\mathbb{C}^2$, and there is a Clifford multiplication $\rho:T^*M\to\Hom (\bS,\bS)$. A unitary connection $B$ on the spinor bundle $\bS$ is called a \emph{$\spinc$ connection} if $\nabla_B \rho=0$.

\subsection{Configuration spaces}
For a smooth vector bundle $V$ over a smooth manifold $M$, we say that a section $s$ of $V$ is \emph{locally $L_k^p$}, if for every $p\in M$ there exists a neighborhood $U$ of $p$ and a (smooth) trivialization of $V|_U$, such that $s|_U$ is $L_k^p$ under this trivialization. We say that a connection $A$ of $V$ is \emph{locally $L_k^p$}, if there exists a smooth connection $\hat A$, such that $A-\hat A$ is a locally $L_k^p$ section of $T^*M\otimes V$.

We recall the following definitions of configuration spaces  from \cite{kronheimer2007monopoles}. 
\begin{defn}
Let $\mathfrak{t}$ be a $\spinc$ structure on a closed 3-manifold $Y$, let $\bS$ be the spinor bundle. Define $\mathcal{C}_{k}(Y,\mathfrak{t})$ to be the set of pairs $(B,\psi)$, where $B$ is a locally $L_k^2$ $\spinc$ connection of $\mathfrak{t}$, and $\psi$ is a locally $L_k^2$ section of $\bS$. Define $\mathcal{C}(Y,\mathfrak{t}) = \cap_{k\ge 1} \mathcal{C}_{k}(Y,\mathfrak{t})$.
\end{defn}
\begin{defn}
Let $\frs$ be a $\spinc$ structure on a compact 4-manifold $X$ possibly with boundary, let $\bS=\bS^+\oplus\bS^-$ be the spinor bundle. Define $\mathcal{C}_{k}(X,\frs)$ to be the set of pairs $(A,\phi)$ such that $A$ is a locally $L_k^2$ $\spinc$ connection of $\frs$, and $\phi$ is a locally $L_k^2$ section of $\bS^+$. Define $\mathcal{C}(X,\frs) = \cap_{k\ge 1} \mathcal{C}_{k}(X,\frs)$.
\end{defn}

Now let $X$ be a Riemannian 4-manifold with cylindrical and ESBG ends as given by Definition  \ref{def_cylindrical_ESBG_end}. Suppose $M_c$ is the cylindrical end, and $(M_s,\omega=d\theta)$ is the ESBG end.
Let $\frs$ be an admissible $\spinc$ structure on $X$ as in  Definition  \ref{def_admissible_spinc_structure}. We define the configuration space for $(X,\frs)$ as follows. Recall that for an oriented closed 3-manifold $Y$, the $\spinc$ structures on $Y$ are in one-to-one correspondence with the $\spinc$ structures on $(-\infty,0]\times Y$ (\cite[Section 4.3]{kronheimer2007monopoles}).

\begin{defn}\label{def: configuration space}
Let $X, M_c,M_s,\omega=d\theta,\frs$ be as above, let $r>0$ be a constant. For $k\ge 1$, define $\mathcal{C}_{k}(X,\frs)$ to be the set of pairs $(A,\phi)$ such that:
\begin{enumerate}
\item $A$ is a locally $L_k^2$ $\Spinc$-connection of $\frs$, and $\phi$ is a locally $L_{k}^2$ section of $\bS^+$;
\item $\displaystyle \int_{M_s} E_r(A,\phi)<+\infty$, where $E_r(A,\phi)$ is defined by \eqref{eqn_def_Er(A,phi)};
\item On the cylindrical end $M_c=(-\infty,0]\times Y$, let $\frt$ be the $\spinc$ structure on $Y$ induced by $\frs|_{M_c}$.  Then the restriction of $(A,\phi)$ on $M_c$ gives a path $(-\infty,0]\to \mathcal{C}_{k-1}(Y,\mathfrak{t})$ that is convergent at $-\infty$, in the $L_{k-1}^2$ topology of $\mathcal{C}_{k-1}(Y,\mathfrak{t})$.
\end{enumerate}
Define 
$\mathcal{C}(X,\frs) = \cap_{k\ge 1} \mathcal{C}_{k}(X,\frs).$
\end{defn}

\subsection{Strongly tame perturbations} \label{subsection: Strongly tame perturbations}

Let $Y$ be an oriented closed three-manifold, let $\frt$ be a $\spinc$ structure on $Y$, and let $B_0$ be a smooth $\spinc$ connection of $\frt$. Let $\CSD$ be the Chern-Simons-Dirac functional on $\mathcal{C}(Y, \mathfrak{t})$ defined by \cite[Definition 4.1.1]{kronheimer2007monopoles} with respect to $B_0$. A Banach space of ``tame'' perturbations of $\CSD$
was introduced and studied in \cite[Sections 10, 11]{kronheimer2007monopoles}. For the purpose of this article, we need to introduce a stronger condition on the perturbations. 

Recall that if $\mathfrak{q}$ is a perturbation of the Chern-Simons-Dirac functional, then the formal gradient of $\mathfrak{q}$ defines a perturbation $(\hat{\mathfrak{q}}^0,\hat{\mathfrak{q}}^1)$ for the Seiberg-Witten equations on the cylinder $[0,1] \times Y$ \cite[Secion 10.1]{kronheimer2007monopoles}.

\begin{defn}\label{def_strongly_tame}
Let $\frs$ be the 
$\spinc$ structure on $[0,1] \times Y$ induced by $\mathfrak{t}$.
A perturbation $\mathfrak{q}$ of $\CSD$ is called \emph{strongly tame} if
\begin{enumerate}
\item It is a tame perturbation as defined by  \cite[Definition 10.5.1]{kronheimer2007monopoles}.
\item There is a constant $m_0$ such that 
$$
\|\hat{\mathfrak{q}}^0 (A,\phi)\|_{C^0} \le m_0(\|\phi\|_{C^0}+1)
$$
for all $(A,\phi)\in\mathcal{C}([0,1] \times Y,\frs)$.
\item There is a constant $m_1$ such that 
$$
\|\hat{\mathfrak{q}}^1 (A,\phi)\|_{C^0} \le m_1
$$
for all $(A,\phi)\in\mathcal{C}([0,1] \times Y,\frs)$.
\end{enumerate}
\end{defn}

Using the calculations in \cite[p.~176]{kronheimer2007monopoles}, it is straight forward to verify that the cylindrical functions constructed in  \cite[Section 11.1]{kronheimer2007monopoles} are strongly tame.  The proof of \cite[Theorem 11.6.1]{kronheimer2007monopoles} defined a norm $\|\cdot\|_{\mathcal{P}}$ on the linear space generated by a seqeuence cylindrical functions, and proved that the completion with respect to $\|\cdot\|_{\mathcal{P}}$ gives a Banach space of tame perturbations. We consider a modified norm defined by
\begin{equation}
\label{eqn_def_norm_strongly_tame_perturbations}
\|\mathfrak{q}\|_{\hat{\mathcal{P}}} = \|\mathfrak{q}\|_{\mathcal{P}} + \sup_{(A,\phi)\in\mathcal{C}([0,1] \times Y,\frs)}  \Big(  \frac{\|\hat{\mathfrak{q}}^0 (A,\phi)\|_{C^0}}{\|\phi\|_{C^0}+1} + \|\hat{\mathfrak{q}}^1 (A,\phi)\|_{C^0} \Big).
\end{equation}
By the same argument as in \cite[Theorem 11.6.1]{kronheimer2007monopoles}, 
the completion with respect to $\|\cdot\|_{\hat{\mathcal{P}}}$ gives a Banach space of strongly tame perturbations that contains the given sequence of cylindrical functions. As a consequence, the transversality property \cite[Theorem 15.1.1]{kronheimer2007monopoles} still holds with respect to strongly tame perturbations.

\subsection{Perturbed Seiberg-Witten equations} \label{subsection: perturbation}
Let $X$ be a Riemannian 4-manifold with cylindrical and ESBG ends, where the cylindrical end is $M_c$ and the ESBG end is $(M_s,\omega=d\theta)$. Let $\frs$ be an admissible $\spinc$ structure on $X$, let $\bS=\bS^+\oplus\bS^-$ be the spinor bundle. Let $r>0$ be a constant. This section introduces a family of perturbations of Seiberg-Witten equations on $(X,\frs)$ parametrized by the constant $r$. Similar perturbations were used in \cite{taubes1996sw,kronheimer1997monopoles} and many other related works.

For $ (A,\phi) \in \moduli{k}$ with $k\ge 2$, define 
$$\mathfrak{F}(A,\phi)=(\rho(F_{A^t}^+)-(\phi\phi^*)_0, D_{A}\Phi), $$
where $(\phi\phi^*)_0$ is the traceless part of $\phi\phi^*$. By definition, $\mathfrak{F}(A,\phi)$ is a section of $i\mathfrak{su}({\bS}^+)\oplus{\bS}^-$. 

By Conditions (3), (4) of Definition \ref{def_cylindrical_ESBG_end}, the cylindrical and ESBG structures extend to neighborhoods of $M_c$ and $M_s$. Let ${M}_c'$ and $({M}_s',\omega'=d\theta')$ be the respective neighborhoods of $M_c$ and $M_s$ on which the cylindrical and ESBG structures extend. If $M_c=\emptyset$, then we take ${M}_c'=\emptyset$. By shrinking the neighborhoods, we may assume that the closures of ${M}_c'$ and ${M}_s'$ are disjoint. We also assume that ${M}_s'$ deformation retracts to $M_s$, therefore the isomorphism from $\frs|_{M_s}$ to $\frs_{M,\omega}$ extends to an isomorphism from $\frs|_{{M}_s'}$ to $\frs_{{M}_s',\omega'}$. In the following, we will fix such an isomorphism from $\frs|_{{M}_s'}$ to $\frs_{{M}_s',\tilde\omega}$. To simplify notation, we will use the fixed isomorphism to identify the spinor bundles of $\frs|_{{M}_s'}$ and $\frs_{{M}_s',\omega'}$.

Suppose $M_c$ is isometric to $(-\infty,0]\times Y$,
let $\frt$ be the $\spinc$ structure on $Y$ induced by $\frs|_{M_c}$. Let $\mathfrak{q}$ be a strongly tame perturbation on $(Y,\frt)$, then the flow line equation of the perturbed Chern-Simons-Dirac functional $\CSD+\mathfrak{q}$ can be written as $\mathfrak{F}(A,\phi) = \hat {\mathfrak{q}}(A,\phi)$, where $\hat {\mathfrak{q}}$ is the formal gradient of $\mathfrak{q}$. For the rest of this article, we will assume $\mathfrak{q}$ is strongly tame in the sense of Definition \ref{def_strongly_tame} and is admissible in the sense of \cite[Definition 22.1.1]{kronheimer2007monopoles}. Moreover, assume that $\|\mathfrak{q}\|_{\hat{\mathcal{P}}}\le 1$, where $\|\cdot\|_{\hat{\mathcal{P}}}$ is the norm defined by \eqref{eqn_def_norm_strongly_tame_perturbations}.

Recall that we have fixed an isomorphism from $\frs|_{{M}'_s}$ to $\frs_{{M}'_s,\omega'}$ and use it to identify the spinor bundles of $\frs|_{{M}'_s}$ and $\frs_{{M}'_s,\omega'}$.
There is a canonical section $\Phi_0$ of $\bS^+|_{{M}'_s}$ given by $1\in \Gamma({M}'_s,T^{0,0}{M}'_s)$, and a canonical $\spinc$ connection $A_0$ on $\frs|_{{M}'_s}$ characterized by $D_{A_0}\Phi_0=0$. Define a section $\hat{u}\in C^{\infty}(M_s', i\mathfrak{su}({\bS}^+)\oplus{\bS}^-)$ on ${M_s'}$ by
\begin{align}
\hat{u} &= (-r(\Phi_0\Phi_0^*)_0+\rho(F_{A_0^t}^+),0) \nonumber\\
          &= \big(-\frac{ir}{4}\rho( \omega')+\rho(F_{A_0^t}^+),0\big). 
              \label{eqn: perturbation on symplectic ends}
\end{align}

Let $\hat\tau\in C_0^\infty (Z-{M_c}'-{M_s}', i\mathfrak{su}({\bS}^+))$.
 Let $\eta\ge 0$ be a smooth cut-off function on $X$ such that $\supp\eta \subset {M_c}'\cup {M_s}'$, and $\eta=1$ on $M_c \cup M_s $.  Define
\begin{equation}\label{eqn: definition of perturbation}
\hat{\mu}=\eta\hat{\mathfrak{q}}+\eta\hat{u}+(\hat\tau,0).
\end{equation}
The Seiberg-Witten equation that will be studied in this article is the equation for $(A,\phi)\in\moduli{k}$ given by:
\begin{equation} \label{SW}
\mathfrak{F}(A,\phi)=\hat{\mu}(A,\phi).
\end{equation}

\subsection{Convergence on different manifolds}

This subsection defines a version of convergence for a sequence of connections and spinors on different manifolds, and gives a sufficient condition for the existence of a convergent subsequence. For a Riemannian manifold $X$, a point $p\in X$, and $d>0$, we use $B_p(d)$ to denote the set of points $x\in X$ such that the distance from $x$ to $p$ is no greater than $d$.

\begin{defn} \label{defn of bounded geometry}
A sequence of pointed Riemannian manifolds possibly with boundary
$$\{(X_n, g_n, p_n)\}_{n\ge 1}$$
is said to have \emph{uniformly bounded geometry}, if there exists a sequence of positive real numbers $\{r_n\}_{n\ge 1}$ such that the following conditions hold:
\begin{enumerate}
\item $\displaystyle \lim_{n\to\infty}r_n=+\infty;$
\item The exponential map of $X_n$ at $p_n$ is defined on the closed ball of radius $r_n$ for each $n$;
\item There exists $\epsilon_0>0$, such that for all $n$, the injectivity radius of $X_n$ is greater than $\epsilon_0$ for every point in $B_{p_n}(r_n)$;
\item For every integer $n\ge 0$, let $R^{(n)}$ be the Riemann curvature tensor of $X_n$, then the sequence
$$
\Big\{\sup_{B_{p_n}(r_n)}|\nabla^k R^{(n)}|\Big\}_{n\ge 1}
$$
is bounded for each $k$.
\end{enumerate}
\end{defn}

\begin{rmk}
\label{rmk_bounded_geometry}
	Suppose $\{(X_n, g_n, p_n)\}_{n\ge 1}$ is a sequence of pointed Riemannian manifolds with uniformly bounded geometry, then for each $N>0$, there exists a constant $C_N>0$ with the following property. For every $n$, suppose $x\in B_{p_n}(r_n)$, let $\varphi_x:B(\epsilon_0)\to X_n$ be the normal coordinate of $(X_n,g_n)$ centered at $x$ with radius $\epsilon_0$. Then $\|\varphi_x^*\, g_n\|_{C^N(B(\epsilon_0))}\le C_N$. As a consequence, for each $k\in \mathbb{Z}^+,\alpha\in(0,1),r>0$, there exists a constant $Q$ such that $\|(B_{p_n}(r_n),g_n)\|_{k+\alpha,r}\le Q$ for all $n$, where $\|\cdot\|_{k+\alpha,r}$ is the norm defined in \cite[Section 2]{petersen1997convergence}. This observation will be used in the proof of Proposition \ref{prop: properness}.
\end{rmk}

\begin{defn} \label{defn of convergence}
Suppose $\{(X_n, g_n, p_n)\}_{n\ge 1}$ is a sequence of oriented pointed Riemannian 4-manifolds with uniformly bounded geometry. For each $n$,  let $\frs_n$ be a $\spinc$ structure on $X_n$, let ${\bS}_n=\bS_n^+\oplus \bS_n^-$ be the corresponding spinor bundle, and let $\rho_n: T^*X_n\to \Hom( {\bS}_n,\,{\bS}_n)$ the Clifford multiplications. 
Let $A_n$ be a locally $L_k^2$ $\spinc$ connection of $\frs_n$, let $\phi_n$ be a locally $L_k^2$ section of $\bS_n^+$. 

The sequence $\{(X_n, g_n, p_n, \frs_n, A_n, \phi_n)\}_{n\ge 1}$ is said to be convergent to 
$$(X,g,p,\frs, A, \phi)$$ up to gauge transformations, if there exists a sequence 
$\{(d_n, U_n, V_n, \varphi_n, \tilde{\varphi}_n, u_n)\}_{n\ge 1}$
 such that the following conditions hold:

\begin{enumerate}
\item $(X,g)$ is a connected complete Riemannian 4-manifold, and $p\in X$.
 $\{d_n\}_{n\ge 1}$ is a sequence of positive real numbers such that $ \lim_{n\to\infty} d_n =+\infty$. The element $V_n$ is an open neighborhood of $p_n$ in $X_n$, and $U_n$ is an open neighborhood of $p$ in $X$. Both $V_n$ and $U_n$ have compact closures in $X_n$ and $X$ respectively.
 \item  The exponential map of $X_n$ at $p_n$ is defined on the closed ball of radius $d_n$ for each $n$, and 
$B_{p_n}(d_n)\subset V_n$ in $X_n$, $B_p(d_n)\subset U_n$ in $X$. The element $\varphi_n$ is a diffeomorphism from $U_n$ to $V_n$ mapping $p$ to $p_n$. Moreover, for every compact subset $K$ of $X$, we have
$$\lim_{n\to\infty}\|\varphi_n^*(g_n)- g\|_{C^m(K\cap U_n)}=0,\,\,\,\text{for all } m\in\mathbb{N}.$$
\item Let $\bS$ be the spinor bundle of $\frs$ and let $\rho:T^*X\to \Hom(\bS,\bS)$ be the Clifford multiplication.
The element $\tilde{\varphi}_n$ is a smooth unitary isomorphism
from ${\bS}_n|_{U_n}$ to ${\bS}|_{V_n}$ lifting $\varphi_n$. Let $\tilde{\varphi}_n^*(\rho_n):T^*X\to \Hom(\bS,\bS)$ be the pull-back of $\rho_n$ via $\tilde{\varphi}$ and the tangent map of $\varphi_n$.
For every compact subset $K$ of $X$, we have
$$\lim_{n\to\infty}\|\tilde{\varphi}_n^*(\rho_n) - \rho\|_{C^m(K\cap U_n)}=0,\,\,\,\text{for all } m\in\mathbb{N}.$$
\item The element $u_n$ is a gauge transformation of $\frs_n$ on $V_n$, such that for every compact subset $K$ of $X$, we have
$$\lim_{n\to\infty}\|\tilde{\varphi}_n^*(u_n(A_n,\phi_n)) - (A,\phi)\|_{C^m(K\cap U_n)}=0,\,\,\text{for all } m\in\mathbb{N}.$$
\end{enumerate}
\end{defn}

\begin{rmk}
By our definition, when $X_n$'s are not connected, the convergence of $$\{(X_n, g_n, p_n, \frs_n, A_n, \phi_n)\}_{n\ge 1}$$ only depends on the connected components containing $p_n$.
\end{rmk}

\begin{prop} \label{prop: properness}
Let $\{(X_n, g_n, p_n)\}_{n\ge 1}$ be a sequence of pointed oriented Riemannian 4-manifolds with uniformly bounded geometry, let $\epsilon_0$, $\{r_n\}_{n\ge 1}$ be the constants given by Definition \ref{defn of bounded geometry}. For each $n$, let $\frs_n$ be a $\spinc$ structure on $X_n$, let $\bS_n$ be the spinor bundle, let $A_n$ be a locally $L_1^2$ $\Spinc$-connection for $\frs_n$, and let $\phi_n$ be a locally $L_1^2$ section of $\bS_n^+$. Assume that there exists a constant $C>0$ such that for every $n$ and every point $x\in B_{p_n}(r_n)$, 
\begin{align}
\int_{B_x(\epsilon_0)}|F_{A_n}|^2 &< C, \label{assumption: local curvature L2 bound} \\
 |\phi_n(x)| &<C,  \label{assumption: uniform C0 bound}
\end{align}
and
\begin{equation} \label{assumption: dirac harmonic}
D_{A_n}(\phi_n)=0.
\end{equation}
 Moreover, assume that $\mathfrak{F}(A_n,\phi_n)$ is smooth for each $n$, and 
\begin{equation} \label{assumption: Ck bound of SW}
\sup_{n\ge 1}\|\mathfrak{F}(A_n,\phi_n)\|_{C^k}<+\infty, \,\,\text{ for all } k\ge 1.
\end{equation}
Then there exists a subsequence of $\{(X_n, g_n, p_n, \frs_n, A_n, \phi_n)\}_{n\ge 1}$ and a configuration $(X,g,p,\frs, A, \phi)$, such that the subsequence  converges to $(X,g,p,\frs, A, \phi)$ in the sense of Definition \ref{defn of convergence}.
\end{prop}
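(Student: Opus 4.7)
The plan is to combine three classical ingredients: Cheeger-Gromov compactness for the underlying Riemannian structures, local Uhlenbeck gauge fixing together with elliptic bootstrapping for the Seiberg-Witten pair, and a diagonal patching argument to globalize the gauges.

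First I would apply the Cheeger-Gromov-Taylor compactness theorem (which the paper has already invoked) to the pointed sequence $\{(M_n,g_n,p_n)\}$. The uniform bounded geometry hypothesis gives, after extracting a subsequence, a pointed complete limit $(M,g,p)$, an exhaustion of $M$ by open sets $U_n \supset B_M(p,d_n)$ with $d_n\to\infty$, and diffeomorphisms $\varphi_n : U_n \to V_n \subset M_n$ with $\varphi_n(p)=p_n$ and $\varphi_n^* g_n \to g$ in $C^\infty_{\mathrm{loc}}$. This produces the data $(d_n,U_n,V_n,\varphi_n)$ required by Definition \ref{defn of convergence}.

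Next I would construct the bundle lifts $\tilde\varphi_n$. Pulling back $\frs_n$ through $\varphi_n$, one obtains Spin$^c$ structures on the contractible pieces of $U_n$; since such structures are determined up to isomorphism by a class in $H^2(\,\cdot\,,\mathbb{Z})$ and since the sets $U_n$ can be exhausted by balls of small geodesic radius (in particular simply connected), the pullbacks are isomorphic to a fixed limit Spin$^c$ structure $\frs$ on $M$. Choosing the isomorphisms compatibly with the $C^\infty_{\mathrm{loc}}$ convergence of metrics, one can arrange that $\tilde\varphi_n^*\rho_n \to \rho$ in $C^\infty_{\mathrm{loc}}$ and use a standard diagonal argument to extend this choice over all of $M$.

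The analytic heart of the proof is to produce the gauges $u_n$. From the curvature equation $\rho(F_{A_n^t}^+)=(\phi_n\phi_n^*)_0 + \rho^{-1}(\mathfrak{F}(A_n,\phi_n)^+)$, the pointwise bound $|\phi_n|<C$, and the hypothesis \eqref{assumption: Ck bound of SW}, I get uniform local $L^2$ bounds on $F_{A_n}^+$, which together with \eqref{assumption: local curvature L2 bound} controls $F_{A_n}$ locally in $L^2$. On geodesic balls of radius smaller than $\epsilon_0$ and smaller than Uhlenbeck's threshold, this lets me apply Uhlenbeck's theorem to produce local Coulomb gauges with uniform $L^2_1$ bounds on $A_n$. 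In Coulomb gauge the full Seiberg-Witten pair satisfies an elliptic system: $D_{A_n}\phi_n=0$ controls $\phi_n$ away from the boundary, while the $C^k$ bound on $\mathfrak{F}$ combined with $d^*a=0$ drives a standard interior bootstrap giving uniform $C^k$ bounds on $(A_n,\phi_n)$ in the chosen gauge on every compact subset of $M$.

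The step I expect to be the main obstacle is patching the local Coulomb gauges into the single global gauge transformations $u_n$ demanded by Definition \ref{defn of convergence}. My plan is to follow the Uhlenbeck-Taubes-Donaldson-Kronheimer procedure: cover $M$ by a countable locally finite collection of small geodesic balls, extract local gauges together with $C^\infty_{\mathrm{loc}}$-convergent subsequences on each ball by a Cantor diagonal extraction, and on each overlap observe that the transition functions between two Coulomb gauges are $\U(1)$-valued, uniformly bounded in $C^k$, hence subconvergent. Proceeding inductively through the cover with a partition of unity and $\U(1)$-valued corrections, one builds gauge transformations $u_n$ on $V_n$ for which $\tilde\varphi_n^*(u_n(A_n,\phi_n)) \to (A,\phi)$ in $C^\infty_{\mathrm{loc}}$, completing the proof.
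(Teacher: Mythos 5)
Your overall strategy—Cheeger-Gromov compactness for the metrics, stabilization of the pulled-back $\Spinc$ structures, and Uhlenbeck-type gauge fixing with elliptic bootstrap to produce the gauges—matches the paper's high-level structure. However, the second step contains a genuine gap.

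You claim that because the $U_n$ are exhausted by small geodesic balls, the pullbacks $\varphi_n^*\frs_n$ are isomorphic to a fixed limit $\Spinc$ structure on $M$. This does not follow: the $U_n$ are \emph{large} open sets (they contain $B_M(p,d_n)$ with $d_n\to\infty$), so they are not contractible, and the limit manifold $M$ can have nontrivial $H^2$. On a fixed compact $K\subset M$, the isomorphism class of $\varphi_n^*\frs_n|_K$ is determined by $c_1(\varphi_n^*\frs_n)|_K \in H^2(K;\mathbb{Z})$, and a priori this sequence of classes could escape to infinity. The paper closes this gap precisely by using the curvature hypothesis \eqref{assumption: local curvature L2 bound}: the local $L^2$ bound on $F_{A_n}$ controls $\big|\int_\Sigma c_1(\varphi_n^*\frs_n)\big|$ uniformly for any compact surface $\Sigma\subset M$, by pairing $\varphi_n^*(F_{A_n})$ against a compactly supported closed $2$-form Poincar\'e dual to $\Sigma$; this yields boundedness of the Chern classes on compact sets and lets one pass to a subsequence on which they stabilize. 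Your argument never invokes the curvature bound for this purpose, so this step as written would fail.

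For the gauge-fixing step, your approach (local Uhlenbeck gauges plus interior bootstrap plus patching of $U(1)$-valued transitions via a diagonal extraction) is a from-scratch version of what the paper obtains by invoking the compactness theorem 5.2.1 of \cite{kronheimer2007monopoles}. The paper's route is to cut off $\phi_n$ near the boundary of the exhaustion, bound the \emph{topological} energy by the curvature $L^2$ bound, and add $\|\mathfrak{F}\|_{L^2}^2$ (controlled by the $C^0$ bounds on $\phi_n$ and $\mathfrak{F}$) to bound the \emph{analytic} energy, which is exactly the hypothesis the cited compactness theorem needs. Both arguments should work; the paper's is shorter because it reuses the existing black box, while yours re-derives it.
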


\begin{proof}
Since $\{(X_n,g_n,p_n)\}_{n\ge 1}$ have uniformly bounded geometry, it follows from \cite[Theorem 2.2]{petersen1997convergence} and Remark \ref{rmk_bounded_geometry} that after taking a subsequence, there exists a complete, connected, pointed Riemannian manifold $(X,g,p)$ and a sequence $\{(d_n, U_n, V_n, \varphi_n)\}_{n\ge 1}$, such that Conditions (1), (2) of Definition \ref{defn of convergence} are satisfied. Although \cite[Theorem 2.2]{petersen1997convergence} requires $(X_n,g_n,p_n)$ to be complete, the proof also works for non-complete manifolds as long as Conditions (1), (2) of Definition \ref{defn of bounded geometry} holds. By taking a further subsequence, we may assume that $U_n\subset U_m\subset X$ for all $n\le m$.

Now we construct a $\spinc$ structure $\frs$ on $X$.
By \eqref{assumption: local curvature L2 bound}, the sequence $$\|\varphi_n^*(F_{A_n})\|_{L^2(U_n\cap K)}$$ is bounded for every compact subset $K$ of $X$. Take an embedded closed oriented surface $\Sigma$ in $X$, let $N(\Sigma)$ be a tubular neighborhood of $\Sigma$. Then 
$$\sup_{\{n|N(\Sigma)\subset U_n\}} \int_{N(\Sigma)}|\varphi_n^*(F_{A_n})|<+\infty,$$   As a consequence,  
$$\sup_{\{n|N(\Sigma)\subset U_n\}}  |\langle 
\varphi_n^*(c_1(\frs_n)), [\Sigma] \rangle|<+\infty $$
 Let $\Spinc(U_n)$ be the set of isomorphism classes of $\spinc$ structures on $U_n$. 
Since the first Chern class determines the $\spinc$ structure up to torsion, there exists a finite set $\Lambda_n\subset \Spinc(U_n)$, such that $\varphi_m^*(\frs_m)|_{U_n}\in \Lambda_n$ for all $m\ge n$.
Therefore, after taking a further subsequence, we may assume that $\varphi_m^*(\frs_m)|_{U_n}$ is isomorphic to $\varphi_n^*(\frs_n)$ for all $m\ge n$.  For each $n$, let $\zeta_{n}: \varphi_{n}^*(\frs_n)\to \varphi_{n+1}^*(\frs_{n+1})|_{U_{n}}$ be an isomorphism of $\spinc$ structures on $U_n$, let $\iota_n:U_n\to U_{n+1}$ be the inclusion map, then $\{(\varphi_n^*(\frs_n),U_n,\zeta_n,\iota_n)\}_{n\ge 1}$ generates a direct system. Taking the limit of this direct system yields a $\spinc$ structure $\frs$ on $X$ and isomorphisms $\tilde{\varphi}_n:\frs|_{U_n} \to \frs_n$ that are lifts of $\varphi_n$.

The only thing remaining to prove is the existence of $(A,\phi)$ and the gauge transformations $u_n$ satisfying Condition (4) of Definition \ref{defn of convergence}. Without loss of generality, we may assume that the closures of $V_n\subset X_n$ and $U_n\subset X$ are compact manifolds with boundary.
 For a pair of positive integers $n\ge m$, let $\mathcal{E}^{an} (A_n|_{V_m},\phi_n|_{V_m})$ be the analytic energy of $(A_n,\phi_n)$ on $V_m$ as defined by \cite[Definition 4.5.4]{kronheimer2007monopoles}. We will show that for every $m$,
 $$\sup_{n\ge m+1}\mathcal{E}^{an} (A_n|_{V_m},\phi_n|_{V_m})< +\infty.$$ 

Since the closure of $U_m$ in $X$ is compact for every $m$, by taking a further subsequence if necessary, we may assume that $\overline{U_{m-1}}\subset U_m$ for all $m$. Moreover, since $X$ is complete, we may take a further subsequence, such that for each $m\ge 1$, there is a cut-off function $\chi_m\ge 0$ on $X$ such that $\supp \chi_m \subset U_{m+1}$, $\chi_m|_{U_{m}}=1$, and $|\nabla\chi_m|\le 1$ for all $m$. 

 For $n\ge m+1$, let $\phi_n^{(m)}=(\chi_m\circ \varphi_n^{-1})\cdot\phi_n$ be a spinor on $V_n$.
By \eqref{assumption: uniform C0 bound}, \eqref{assumption: Ck bound of SW}, and Condition (2) of Definition \ref{defn of convergence}, we have
\begin{align} 
\|\mathfrak{F}(A_n,\phi_n^{(m)})\|_{L^2(V_{m+1})} &\le C_1\,(\|\mathfrak{F}(A_n,\phi_n)\|_{L^2(V_{m+1})} + \|\phi_n\|_{C^0} + \|\phi_n\|_{C^0}^2) \nonumber
\\
& \le C_2
\label{L2 bound of mathcal(F)}
\end{align}
for constants $C_1, C_2$ depending on $m$.

Let $\mathcal{E}^{top} (A_n|_{V_{m+1}},\phi_n^{(m)}|_{V_{m+1}})$ be the topological energy of $(A_n,\phi_n^{(m)})$ on $V_{m+1}$ as defined by \cite[Definition 4.5.4]{kronheimer2007monopoles}.
Since $\phi_n^{(m)}$ is compactly supported on $V_{m+1}$, we have
\begin{equation} \label{eqn_topological energy on Vm}
\mathcal{E}^{top} (A_n|_{V_{m+1}},\phi_n^{(m)}|_{V_{m+1}})
 = \frac{1}{4}\int_{V_{m+1}}F_{A_n^t}\wedge F_{A_n^t},
\end{equation}
which is bounded by a constant depending on $m$ because of \eqref{assumption: local curvature L2 bound}.

By \cite[(4.16)]{kronheimer2007monopoles}, $$\mathcal{E}^{an}(A_n|_{V_{m+1}},\phi_n^{(m)}|_{V_{m+1}})=\mathcal{E}^{top}(A_n|_{V_{m+1}},\phi_n^{(m)}|_{V_{m+1}})+\|\mathfrak{F}(A_n,\phi_n^{(m)})\|_{L^2(V_{m+1})}^2,$$
therefore by \eqref{L2 bound of mathcal(F)} and \eqref{eqn_topological energy on Vm}, $\mathcal{E}^{an} (A_n|_{V_{m+1}},\phi_n^{(m)}|_{V_{m+1}})$ is bounded by a constant depending on $m$. Since 
$$\mathcal{E}^{an} (A_n|_{V_{m+1}},\phi_n^{(m)}|_{V_{m+1}}) \ge \mathcal{E}^{an} (A_n|_{V_{m}},\phi_n|_{V_{m}})-C_3,$$ 
for a constant $C_3$ depending on $m$, we conclude that  $$\sup_{n\ge m+1}\mathcal{E}^{an} (A_n|_{V_m},\phi_n|_{V_m})< +\infty.$$ 

By Condition (2) of Definition \ref{defn of convergence}, the statement above implies that
$$\{\mathcal{E}^{an} (\tilde{\varphi}^*(A_n)|_{U_m},\tilde{\varphi}^*(\phi_n)|_{U_m})\}_{n\ge m+1}$$
 is bounded for every $m$.
Therefore, by a diagonal argument, the existence of $(A,\phi)$ and $u_n$ satisfying Condition (4) of Definition \ref{defn of convergence} follows from \cite[Theorem 5.2.1]{kronheimer2007monopoles}. 
\end{proof}

\section{Exponential decay of $E_r(A,\phi)$}\label{sec: exponential decay}
This section proves a weak version of Theorem \ref{thm_uniform_exponential_decay}, which will be stated as Proposition \ref{prop: exponential decay on symplectic ends}.

Let $X$ be a Riemannian 4-manifold with cylindrical and ESBG ends, and suppose the cylindrical end is $M_c$ and the ESBG end is $(M_s,\omega=d\theta)$. Let $\frs$ be an admissible $\spinc$ structure on $X$, let $\bS=\bS^+\oplus\bS^-$ be the spinor bundle of $\frs$, let $\Phi_0$ be the canonical section of $\bS^+|{M_s}$, and let $A_0$ be the $\spinc$ connection on $\frs|_{M_s}$ such that $D_{A_0}\Phi_0=0$.

 Let
$(A,\phi)\in\moduli{k}$
be a solution to \eqref{SW}. By the standard elliptic regularity arguments, 
 $(A,\phi)$ is locally $C^\infty$ on $X$ after suitable gauge transformations. Since the perturbation $\mathfrak{q}$ in \eqref{eqn: definition of perturbation} is assumed to be admissible, it follows from \cite[Proposition 13.4.1]{kronheimer2007monopoles} that Condition (3) of Defintion \ref{def: configuration space} implies the $C^0$ convergence of $(A,\phi)$ on $M_c$ after gauge transformations. As a consequence, 
 \begin{equation} \label{eqn_sup_|phi|_finite}
 \|\phi\|_{C^0(M_c)}<+\infty.
 \end{equation}

 Let $\inj(X)$ be the injectivity radius of $X$. By Definition \ref{def_cylindrical_ESBG_end}, $\inj(X)>0$. Let 
\begin{equation}\label{eqn_def_epsilon0}
\epsilon_0=\min\{\frac{\inj(X)}{2}, 1\}.
\end{equation}

The following convention will be adopted for the rest of this article unless otherwise stated: the notations $z$ or $z_i$ will denote positive real numbers that only depend on $X, M_s, M_c, \theta$, and the terms $\hat \tau$ and $\eta$ in \eqref{eqn: definition of perturbation}. The notation $r_0$ will denote a positive real number that depends only on the same set of data, and we will assume that $r > r_0$ for the constant $r$ in \eqref{eqn: perturbation on symplectic ends}. The value of $r_0$ may increase as the proof proceeds.

\subsection{$C^0$ bound}
 This subsection proves the following $C^0$ estimate. Recall that $A_0$ is the canonical connection on $\frs|_{M_s}$, and let $a=A|_{M_s}-A_0$
\begin{prop} \label{uniform C0 bound}
There exist constants $z$, $r_0$, such that for all $r>r_0$ and $(A,\phi)\in\moduli{k}$
satisfying \eqref{SW}, we have
$\|\phi\|_{C^0(X)}\le z\cdot \sqrt{r}$, and
 $\|F_a^+\|_{C^0(X)}\le z\cdot{r}$.
\end{prop}

The proof starts with the following $C^0$ estimate on $M_s$, which is adapted from \cite[Lemma 3.23]{kronheimer1997monopoles}. Recall that $\epsilon_0$ is the constant defined by \eqref{eqn_def_epsilon0}.
\begin{lem}\label{C0 bound on the negative end}
Let $N(\partial M_s)$ be the $\epsilon_0$-neighborhood of $\partial M_s$.
There exist  constants $z, r_0$, such that if $r>r_0$ and $(A,\phi)\in\moduli{k}$
solves \eqref{SW}, we have
$$\|\phi\|_{C^0(M_s- N(\partial M_s))}\le z\cdot \sqrt{r}.$$
\end{lem}

\begin{proof}
By (2.2) of \cite{taubes1996sw}, the following inequality holds on $M_s$ for a constant $z_1$:
$$
\frac{1}{2}d^*d|\phi|^2+|\nabla_A\phi|^2+\frac{1}{4}|\phi|^2(|\phi|^2-r)-z_1\cdot |\phi|^2 \le 0.
$$
Take $r_0>4z_1$. For $r>r_0$, we have
\begin{equation}\label{eqn_c0_phi_1}
\frac{1}{2}d^*d|\phi|^2+\frac{1}{4}|\phi|^2(|\phi|^2-2r) \le 0.
\end{equation}
For $x\in M_s-N(\partial M_s)$, let $\rho$ be the distance function to $x$ on $B_x(\epsilon_0)$.  Let $f$ be the function on the interior of $B_x(\epsilon_0)$ defined by $f=1/(\epsilon_0^2-\rho^2)^2$. 
Since $\epsilon_0$ is less than the injectivity radius of $X$, let $(g_{ij})_{1\le i,j\le 4}$ be the metric matrix of the normal coordinates of $B_x(\epsilon_0)$ centered at $x$, and let $g$ be the determinant of $(g_{ij})_{1\le i,j\le 4}$. We have
$$
d^*df=-\frac{1}{\rho^{n-1}\sqrt{g}}\frac{\partial}{\partial\rho}\big(\rho^{n-1}\sqrt{g}\cdot\frac{\partial f}{\partial \rho}\big).
$$
Notice that since $X$ has bounded geometry, $\|g\|_{C^0}$ and $\|\nabla g\|_{C^0}$ are both bounded by constants independent of $x$, and $g$ is bounded away from $0$ on $B_x(\epsilon_0)$. A straightforward calculation shows that for some large constant $z_2>0$,
\begin{equation}\label{eqn_c0_phi_2}
\frac{1}{2}d^*d\big((z_2)^2rf\big)+\frac{1}{4}\big((z_2)^2rf\big)\big((z_2)^2rf-2r\big)\ge 0.
\end{equation}
Let $x'$ be a point in $B_x(\epsilon_0)$ where the function $(z_2)^2rf-|\phi|^2$ achieves the minimum value. Since the limit of $f$ on $\partial B_x(\epsilon_0)$ is $+\infty$, such a point $x'$ exists in the interior of $B_x(\epsilon_0)$. 
By \eqref{eqn_c0_phi_1} and \eqref{eqn_c0_phi_2}, we have
$$
|\phi(x')|^2(|\phi(x')|^2-2r) \le \big((z_2)^2rf(x')\big)\big((z_2)^2rf(x')-2r\big),
$$
therefore $|\phi(x')|^2\le \max\{2r, (z_2)^2rf(x')\}\le 2r+(z_2)^2rf(x')$. This implies $|\phi|^2 \le 2r+(z_2)^2rf$, hence $|\phi(x)|\le z\cdot\sqrt{r}$ for $z=\sqrt{2+(z_2)^2}$. 
\end{proof}

Now we prove Proposition \ref{uniform C0 bound}.

\begin{proof}[Proof of Proposition \ref{uniform C0 bound}]
Let $r_0$ be the constant given by Lemma \ref{C0 bound on the negative end}. Increase the value of $r_0$ if necessary such that $r_0\ge 1$, and assume $r>r_0$. 

By \eqref{eqn_sup_|phi|_finite} and Lemma \ref{C0 bound on the negative end}, we have $\sup_X|\phi|<+\infty$.
Let $x_0\in X$ be a point such that $|\phi({x_0})|\ge \frac12 \sup_{X}|\phi|$. Let $\epsilon<\epsilon_0$ be a positive constant that will be determined later. Notice that $(A,\phi)$ satisfies the following equations on $B_{x_0}(\epsilon)$:
\begin{align}
\rho(F_{A^t}^+) &=(\phi\,\phi^*)_0+\hat{\mu}^0(A,\phi),  \label{eqn: SW1}\\
D_A \phi&= \hat{\mu}^1(A,\phi), \label{eqn:SW2}
\end{align}
where $\hat\mu$ is given by \eqref{eqn: definition of perturbation}.
Since the perturbation $\mathfrak{q}$ is strongly tame and $|\phi({x_0})|\ge \frac12 \sup_{X}|\phi|$, there exists a constant $z_0$ such that the following holds on $B_{x_0}(\epsilon)$:
\begin{align}
\|\hat{\mu}^0(A,\phi)\|_{C^0} &\le z_0\,(1+|\phi({x_0})|)+z_0\,r, \label{eqn: bound1}\\
\|\hat{\mu}^1(A,\phi)\|_{C^0} &\le z_0. \label{eqn: bound2}
\end{align}
In the following, we will write $\hat{\mu}^0(A,\phi)$ as $\hat{\mu}^0$, and $\hat{\mu}^1(A,\phi)$ as $\hat{\mu}^1$.
Applying $D_A$ to both sides of \eqref{eqn:SW2} yields
$$D_{A}^2\phi=D_A(\hat{\mu}^1).$$
By the Weitzenb\"ock formula, this implies
\begin{equation}\label{eqn: Weitzenbock formula}
 \nabla_A^*\nabla_A\phi+\frac{1}{2}\rho(F_{A^t}^+)\phi+\frac{1}{4}s\phi=D_A(\hat{\mu}^1),
\end{equation}
where $s$ is the scalar curvature of $X$.
Plug in \eqref{eqn: SW1} to \eqref{eqn: Weitzenbock formula}, and take the inner product with $\phi$, we obtain
\begin{equation*}
\frac{1}{2}d^*d|\phi|^2+|\nabla_A\phi|^2+\frac{1}{4}|\phi|^4+\frac{1}{4}\langle s\phi,\phi\rangle +\frac{1}{2}\langle \hat{\mu}^0\phi, \phi \rangle = \langle D_A(\hat{\mu}^1), \phi\rangle.
\end{equation*}
Recall that $r>r_0\ge 1$, hence by \eqref{eqn: bound1}, there exists a constant $z_1$ such that
$$
\frac{1}{2}d^*d|\phi|^2+|\nabla_A\phi|^2+\frac{1}{4}|\phi|^4\le \langle D_A(\hat{\mu}^1), \phi\rangle
+z_1\,r |\phi|^2 + z_1 |\phi(x_0)|\,|\phi|^2.
$$
By the arithmetic-geometric mean inequality,
$$
-\frac{1}{16}|\phi|^4- 4\,z_1^2\,r^2 \le -z_1\,r\,|\phi|^2 ,
$$
$$
-\frac{1}{16}|\phi|^4- 4\,z_1^2\,|\phi(x_0)|^2 \le -z_1\,|\phi(x_0)|\,|\phi|^2 .
$$
Adding the above three inequalities, we obtain
$$\frac{1}{2}d^*d|\phi|^2+|\nabla_A\phi|^2+\frac{1}{8}|\phi|^4-4z_1^2(\,r^2+|\phi({x_0})|^{2})\le \langle D_A(\hat{\mu}^1), \phi\rangle.$$
Let $h\ge 0$ be a smooth function on $B_{x_0}(\epsilon)$ such that $h=1$ on $B_{x_0}(\epsilon/4)$ and $\supp h\subset B_{x_0}(\epsilon/2)$. Let $\chi=h^4$.  Let $G_{x_0}\ge 0$ be the Green's function on $B_{x_0}(\epsilon)$ that has a pole at ${x_0}$ and equals zero on $\partial B_{x_0}(\epsilon)$. Then:
\begin{multline*}
\int_{B_{x_0}(\epsilon)}\Big(\frac{1}{2}d^*d|\phi|^2+|\nabla_A\phi|^2+\frac{1}{8}|\phi|^4-4z_1^2(\,r^2+|\phi({x_0})|^{2})\Big)\cdot G_{x_0} \cdot \chi \\
\le \int_{B_{x_0}(\epsilon)} \langle D_A(\hat{\mu}^1), \phi\cdot G_{x_0}\chi \rangle ,
\end{multline*}
which gives
\begin{multline}
\int_{B_{x_0}(\epsilon)} \Big( \big(-\frac{1}{2}\Delta(|\phi|^2\chi)+\frac{1}{2}|\phi|^2\Delta\chi+\nabla|\phi|^2\cdot\nabla\chi\big)\cdot G_{x_0}+|\nabla_A\phi|^2\,G_{x_0}\,\chi\\
+\frac{1}{8}|\phi|^4\, G_{x_0}\,\chi-4z_1^2(\,r^2+|\phi({x_0})|^{2})\,G_{x_0}\,\chi \Big)
\le
\int_{B_{x_0}(\epsilon)}\langle \hat{\mu}^1, D_A(\phi\,G_{x_0}\,\chi) \rangle.
\end{multline}
Therefore
\begin{multline} \label{eqn: integral over Green kernel}
\frac{1}{2}|\phi({x_0})|^2\le \int_{B_{x_0}(\epsilon)} \Big((-\frac{1}{2}|\phi|^2\Delta\chi-\nabla|\phi|^2\cdot\nabla\chi)\cdot G_{x_0}-|\nabla_A\phi|^2\,G_{x_0}\,\chi\\
-\frac{1}{8}|\phi|^4\, G_{x_0}\,\chi+4z_1^2(\,r^2+|\phi({x_0})|^{2})\,G_{x_0}\,\chi +
\langle \hat{\mu}^1, D_A(\phi\,G_{x_0}\,\chi) \rangle\Big).
\end{multline}
Recall that $\chi=h^4$, hence $|\Delta\chi| \le  4h^3|\Delta h| +12h^2|\nabla h|^2$. By the arithmetic-geometric mean inequality, there exists a constant $z_2$ such that
\begin{align*}
-\frac{1}{2}|\phi|^2\Delta\chi
\le & |\phi|^2(2h^3|\Delta h| +6h^2|\nabla h|^2)\\
\le & z_2(|\Delta h|^2 h^2+|\nabla h|^4)+\frac{1}{16}|\phi|^4\,h^4\\
= & z_2(|\Delta h|^2 h^2+|\nabla h|^4)+\frac{1}{16}|\phi|^4\,\chi.
\end{align*}
Similarly, there exists a  constant $z_3$  such that
\begin{align*}
|\nabla|\phi|^2|\cdot|\nabla\chi|
\le & 2|\phi|\cdot|\nabla_A\phi|\cdot (4h^3 |\nabla h|)\\
\le & 32\, |\phi|^2h^2|\nabla h|^2+\frac12 |\nabla_A\phi|^2 h^4\\
\le & z_3 |\nabla h|^4+\frac{1}{16}|\phi|^4\, h^4 +\frac12 |\nabla_A\phi|^2 h^4\\
= & z_3 |\nabla h|^4+\frac12 |\nabla_A\phi|^2\chi+\frac{1}{16}|\phi|^4\,\chi.
\end{align*}
By \eqref{eqn: bound2}, there exists a constant $z_4$ such that
\begin{align*}
&\int_{B_{x_0}(\epsilon)} \langle \hat{\mu}^1, D_A\phi\rangle\,G_{x_0}\,\chi
\le
\int_{B_{x_0}(\epsilon)} z_0\, |D_A\phi|\,G_{x_0}\,\chi
\\
\le
&\int_{B_{x_0}(\epsilon)} \frac{1}{2}|\nabla_A\phi|^2G_{x_0}\chi + z_4\int_{B_{x_0}(\epsilon)} G_{x_0}\,\chi.
\end{align*}
Therefore \eqref{eqn: integral over Green kernel} and the three estimates above yield
\begin{multline} \label{eqn: integral over Green kernel 1}
\frac{1}{2}|\phi({x_0})|^2 \le  
\int_{B_{x_0}(\epsilon)} \Big(z_2(|\Delta h|^2 h^2+|\nabla h|^4)+z_3 |\nabla h|^4\Big)\,G_{x_0} 
 + z_4\int_{B_{x_0}(\epsilon)} G_{x_0}\,\chi  
 \\
 +\int_{B_{x_0}(\epsilon)} |\hat{\mu}^1||\phi||\nabla(G_{x_0}\,\chi)|+4z_1^2(\,r^2+|\phi({x_0})|^{2})\int_{B_{x_0}(\epsilon)} \,G_{x_0}\,\chi.
\end{multline}
Recall that $|\phi(x_0)|\ge \frac12 \sup_X |\phi|$, therefore by \eqref{eqn: bound2},
\begin{align}
 \int_{B_{x_0}(\epsilon)} |\hat{\mu}^1||\phi||\nabla(G_{x_0}\,\chi)| &
 \le 2\, |\phi({x_0})| \int_{B_{x_0}(\epsilon)} |\hat{\mu}^1||\nabla(G_{x_0}\,\chi)| \nonumber \\
 & \le \frac{1}{8}|\phi({x_0})|^2 + 8\, \Big(\int_{B_{x_0}(\epsilon)} |\hat{\mu}^1||\nabla(G_{x_0}\,\chi)|\Big) ^2
 \nonumber
 \\
  & \le \frac{1}{8}|\phi({x_0})|^2 + 8\,z_0^2\, \Big(\int_{B_{x_0}(\epsilon)} |\nabla(G_{x_0}\,\chi)|\Big) ^2. \label{eqn: plug in before rearrange}
\end{align}
Notice that the constants $z_i$ do not depend on the choice of $\epsilon$, and there exist constants $z_5, z_6$ such that
$$
z_5\, \epsilon^2\le \int_{B_{x_0}(\epsilon)}G_{x_0} \le z_6\, \epsilon^2.
$$
Take $\epsilon = 1/(z_7\,\sqrt{r})$, with $z_7$ sufficiently large such that
\begin{equation}\label{eqn: epsilon small}
\int_{B_{x_0}(\epsilon)} \,G_{x_0}\le z_6 \, \epsilon^2\le  \min\,\Big\{\frac{1}{r},\frac{1}{32 \, z_1^2}\Big\}.
\end{equation}
Plug in \eqref{eqn: plug in before rearrange} and \eqref{eqn: epsilon small} to \eqref{eqn: integral over Green kernel 1}, and rearrange, we have
\begin{multline*}
\frac{1}{4}|\phi({x_0})|^2 \le  
 \int_{B_{x_0}(\epsilon)} \Big(z_2(|\Delta h|^2 h^2+|\nabla h|^4)+z_3 |\nabla h|^4\Big)\,G_{x_0} 
 + z_4\int_{B_{x_0}(\epsilon)} G_{x_0}\,\chi  \\
  +8\,z_0^2\,\Big(\int_{B_{x_0}(\epsilon)} |\nabla(G_{x_0}\,\chi)|\Big) ^2 + 4\,z_1^2\,r^2\int_{B_{x_0}(\epsilon)}\,G_{x_0}\,\chi .
\end{multline*}
Since $\epsilon = 1/(z_7\,\sqrt{r})$, one can choose the function $h$ such that the right-hand side of the above inequality is bounded by $z_8\cdot r$ for some constant $z_8$, hence the estimate on $\|\phi\|_{C^0(X)}$ is proved. The upper bound on $\|F_a^+\|_{C^0(X)}$ then follows from \eqref{eqn: SW1} and \eqref{eqn: bound1}.
\end{proof}

\subsection{Exponential decay on $M_s$}
Recall that the spinor $\phi|_{M_s}$ decomposes as $\phi=\sqrt{r}(\alpha+\beta)$, with $\alpha\in\Gamma(M_s, T^{0,0}M_s)$, $\beta\in\Gamma(M_s, T^{0,2}M_s)$. The spinor bundle $\bS^+$ has a canonical section $\Phi_0$ on $M_s$ given by $1\in \Gamma(M_s, T^{0,0}M_s)$, and there is a unique $\spinc$ connection $A_0$ on $\frs|_{M_s}$ such that $D_{A_0}\Phi_0=0$. Take $a=A|_{M_s}-A_0$, and 
take  $\nabla_A'$ to be the projection of $\nabla_A|_{M_s}$ to $T^{0,2}M_s$. The energy density function $E_r(A,\phi)$ is defined on $M_s$ by
\eqref{eqn_def_Er(A,phi)}.
If $(A,\phi)\in\moduli{k}$, then we have
$$
\int_{M_s}E_r(A,\phi)<+\infty.
$$

Recall that the function $d$ on $M_s$ is defined as follows.
For each connected component $M_s^{(k)}$ of $M_s$, if $\partial M_s^{(k)}$ is nonempty, then $d$ is the distance function to $\partial M_s^{(k)}$ on $M_s^{(k)}$. Otherwise, fix a point $x^{(k)}\in M_s^{(k)}$, and $d$ is the distance function to $x^{(k)}$ on $M_s^{(k)}$.
The main result of this section is the following proposition.
\begin{prop} \label{prop: exponential decay on symplectic ends}
There exist constants $z, z', r_0$ such that the following holds. Suppose $r>r_0$ and $(A,\phi)\in\mathcal{C}_k(X,\frs)$ solves \eqref{SW}, then there is a constant $d_0$, which may depend on $r$ and $(A,\phi)$, such that
\begin{equation}\label{eqn_exponential_decay_Er}
E_r(A,\phi)(x)<z e^{-\sqrt{r}\cdot(d(x)-d_0)/z'}
\end{equation}
for every $x\in M_s$ with $d(x)>d_0$.
\end{prop}

We start the proof with the following lemma, which is adapted from \cite[Lemma 3.21]{kronheimer1997monopoles}.
\begin{lem} \label{lem: pointwise convergence}
Let $(A,\phi)$ be as in Proposition \ref{prop: exponential decay on symplectic ends}, then given $\delta>0$, there exists $d(\delta)>0$ depending on $(A,\phi)$, $r$ and $\delta$, such that for all $x\in M_s$ with $d(x)>d(\delta)$, we have
$$E_r(A,\phi)(x) <\delta.$$
\end{lem}

\begin{proof}
Assume the contrary, then there is a sequence $\{x_n\}_{n\ge 1}\subset M_s$ and a constant $\delta>0$, such that $d(x_n)\to +\infty$ and $E_r(A,\phi)(x_n)\ge\delta$ for all $n$. Let $\epsilon_0>0$ be given by \eqref{eqn_def_epsilon0}. After taking a subsequence of $\{x_n\}$ if necessary, we may assume that the balls $B_{x_n}(\epsilon_0)$ are pairwise disjoint and are all included in $M_s$. Let $g$ be the metric of $X$. Consider the sequence $(M_s,g,x_n,\frs, A,\phi)$. By Proposition \ref{prop: properness} and Lemma \ref{C0 bound on the negative end}, a subsequence converges to a limit 
$(\widetilde{M_s},\tilde{g},\tilde{x},\tilde{\frs},\tilde{A},\tilde{\phi})$. Recall that $\nabla^k \omega$
is bounded for all $k$. By a diagonal argument and the Arzel\`a-Ascoli theorem, after taking a further subsequence, the symplectic form $\omega$ converges to a limit symplectic form $\tilde{\omega}$ on $\tilde{M}_s$, in the $C^\infty$ topology on compact subsets. The symplectic form $\tilde{\omega}$ is compatible with $\tilde{g}$, and hence it defines an energy density function $\widetilde{E_r}(\tilde{A},\tilde{\phi})$ on $\tilde{M}_s$. By the assumptions on $x_n$, we have $\widetilde{ E_r}(\tilde{A},\tilde{\phi})(\tilde{x})\ge \delta$, thus 
$$
\int_{B_{\tilde{x}}(\epsilon_0)}\tilde{E}_r(\tilde{A},\tilde{\phi}) >0.
$$
Therefore, there exists a positive constant $\delta'>0$, such that 
$$\int_{B_{x_n}(\epsilon_0)} E_r(A,\phi) > \delta'$$ for sufficiently large $n$. This contradicts the assumption that $$\int_{M_s}E_r(A,\phi)<+\infty.\phantom\qedhere\makeatletter\displaymath@qed$$
\end{proof}

The following lemma is an extension of \cite[Lemma 3.24]{kronheimer1997monopoles}. Recall that for a point $p$ in a complete Riemannian manifold $M$, we use $B_p(r)$ to denote the set of points in $M$ whose distance to $p$ is no greater than $r$, and $r$ is allowed to be greater than the injectivity radius of $M$ at $p$.

\begin{lem} \label{lem: working horse}
Let $K, v_0, R>0$, $r\ge 1$ be constants. Let $M$ be an $n$-dimensional complete Riemannian manifold with $\Ric \ge -K$, let $x_0\in M$. Let
 $s$ be a $C^2$ function on $B_{x_0}(R)$. Suppose $s$ satisfies:
$$
\frac{1}{2}d^*d\,s+rVs\le h,
$$
where $h,V$ are $C^0$ functions, and $V\ge v_0$ on $B_{x_0}(R)$. 
Then there exists a positive constant $\epsilon$ depending only on $n$, $K$, $R$, and $v_0$, such that the following inequality holds:
$$
 s({x_0}) \le \Big(\sup_{B_{x_0}(R)} \Big|\frac{h}{rV}\Big|\Big )+\Big(\sup_{\partial B_{x_0}(R)}|s|\Big) e^{-\epsilon R\sqrt{r}}.
$$
If $\partial{B_{x_0}(R)}=\varnothing$, then $\sup_{\partial B_{x_0}(R)}|s|$ in the above inequality is defined to be $0$.
\end{lem}

\begin{proof}
Let $\rho$ be the distance function to ${x_0}$, let $k=\sqrt{K/(n-1)}$. By the distributional Laplacian comparison theorem, the following inequality holds on $X$ in the sense of distributions:
\begin{equation}\label{eqn_distributional_laplacian_comparison}
\Delta \rho\le \frac{n-1}{\rho}(1+k\,\rho).
\end{equation}
In other words, for every non-negative function $\varphi\in C_0^\infty (X)$, we have
$$
\int_X \rho \,\Delta \varphi \le \int \frac{n-1}{\rho}(1+k\,\rho)\varphi.
$$
The reader may refer to \cite[Corollary 2.12]{mantegazza2014distributional} for the proof of \eqref{eqn_distributional_laplacian_comparison} (see also \cite{ishii1995equivalence} and \cite[Theorem 3]{calabi1958extension}).

 Let $f(u)$ be a smooth, non-decreasing function on $\mathbb{R}$ such that $f(u)=0$ when $u\le R/4$ and $f(u)=u$ when $u\ge R/2$. Let $g=e^{\epsilon\sqrt{r}f(\rho)}$ be a function on $M$, where $\epsilon$ is a small positive constant that will be determined later. Notice that in the sense of distributions,
\begin{align*}
 d^*dg=-\Delta{g} &=-(\epsilon\sqrt{r}f''(\rho)+\epsilon^2 r(f'(\rho))^2+\epsilon \sqrt{r}f'(\rho)\Delta \rho)\,g \\
   & \ge -\Big(\epsilon\sqrt{r}f''(\rho)+\epsilon^2 r(f'(\rho))^2+\epsilon \sqrt{r}f'(\rho)\big(\frac{n-1}{\rho}+k(n-1)\big)\Big)\,g.
\end{align*}
Therefore, there exists a constant $\epsilon$ depending only on $n$, $K$, $R$, and $v_0$, such that  
$$\frac{1}{2}d^*dg+rVg\ge 0$$
in the sense of distributrions. Let 
$$
\tilde g = \sup_{B_{x_0}(R)} \Big|\frac{h}{rV}\Big| + \Big(\sup_{\partial{B_{x_0}(R)}}|s|\Big) \cdot g/ e^{\epsilon\sqrt{r}\,R},
$$
then $\frac{1}{2}d^*ds+rVs\le \frac{1}{2}d^*d\tilde{g}+rV\tilde{g}$ in the sense of distributions, and $s|_{\partial{B_{x_0}(R)}}
\le \tilde{g}|_{\partial{B_{x_0}(R)}}$.
By the maximum principle for weak solutions \cite[Theorem 8.1]{gilbarg2001elliptic}, we have $s\le g$ on the ball $B_{x_0}(R)$, hence the lemma is proved.
\end{proof}

\begin{proof}[Proof of Proposition \ref{prop: exponential decay on symplectic ends}]
Recall that we use the notation $z_i$ to denote constants that only depend on $X, M_s, M_c, \theta$, and the terms $\hat \tau$ and $\eta$ in \eqref{eqn: definition of perturbation}. In the following we will require $r_0\ge 1$.
The proof follows the strategy of \cite[Section 3]{kronheimer1997monopoles}, and is divided into 7 steps:

\paragraph{\bf Step 1.}
By Lemma \ref{lem: pointwise convergence}, there exists $d_1>0$ depending on $(A,\phi)$ and $r$, such that if $x\in M_s$ satisfies $d(x)>d_1$ then
\begin{equation} \label{eqn: property of T0}
|\alpha(x)|>\frac{1}{2},\quad E_r(A,\phi)(x)<1.
\end{equation}

\paragraph{\bf Step 2: Pointwise estimates of $\alpha$ and $\beta$.}
By \cite[Lemma 2.2]{taubes1996sw}, there exist constants $z_1,\,z_2,\,z_3\ge 1$, such that if $\zeta\in(0,\frac{r}{2z_1z_2})$, $r>z_1$, and $\delta > z_3$, let
\begin{equation}\label{eqn_deinition_u_from_alpha_beta}
u=(1-|\alpha|^2)-\zeta|\beta|^2+\frac{\delta}{\zeta r},
\end{equation}
then the following inequality holds:
$$
\frac{1}{2}d^*du+\frac{r}{4}|\alpha|^2u\ge 0.
$$
Notice that the $C^0$ norm of $u$ is bounded by \eqref{eqn: property of T0}, hence by Lemma \ref{lem: working horse} and \eqref{eqn: property of T0}, there are constants $z_5, z_6$ such that
\begin{equation} \label{eqn_pointwise_lower_bound_u}
u\ge -z_5e^{-\sqrt{r}\cdot (d-d_1)/{z_6}}, 
\end{equation}
on $\{x\in M_s|d(x)>d_1+1\}$.
Therefore there exists a constant $z_7$ such that 
\begin{align}
|\alpha|^2 &\le 1+ \frac{z_7}{r^2} \label{eqn: C0 of alpha},\\
|\beta|^2&\le \frac{z_7}{r}\big(1-|\alpha|^2+\frac{z_7}{r^2}\big), \label{eqn: C0 of beta}
\end{align}
on $\{x\in M_s|d(x)>d_1+1\}$.

\paragraph{\bf  Step 3: Pointwise estimates of $F_a$.}
On $M_s$, the curvature part of \eqref{SW} can be rewritten as (cf. \cite[(8),(9)]{kotschick1995seiberg})
\begin{equation} \label{eqn: F_a by tensor}
F_a^+=-\frac{i}{8}r\cdot\big(1-|\alpha|^2+|\beta|^2)\omega+\frac{ r}{4}(\alpha^*\beta-\alpha\beta^*).
\end{equation}
By \eqref{eqn: C0 of alpha} and \eqref{eqn: C0 of beta}, there exists a constant $z_{11}$ such that 
$$
|F_a^+| \le \frac{r}{4\sqrt{2}}(1+\frac{z_{11}}{r})(1-|\alpha|^2)+z_{11}.
$$

Now we estimate $|F_a^-|$. By \cite[Lemma 2.5]{taubes1996sw}, there exist constants $z_{12},\,z_{13}$, $z_{14}$, $z_{15}$ such that if $r>z_{15}$, then for
\begin{align*}
q_0 & =\frac{r}{4\sqrt{2}}(1+\frac{z_{12}}{r})(1-|\alpha|^2)-z_{13}\cdot r|\beta|^2+z_{14},\\
s &= |F_a^-|,
\end{align*}
we have
$$
\frac{1}{2}d^*d(s-q_0)+\frac{r}{4}|\alpha|^2(s-q_0)\le |\mathcal{R}|s,
$$
where $\mathcal{R}$ is a curvature term that is uniformly bounded on $M_s$.

Therefore, if $r>8\sup |\mathcal{R}|$, we have
$$
\frac{1}{2}d^*d(s-q_0)+\frac{r}{8}|\alpha|^2(s-q_0)\le |\mathcal{R}|\cdot |q_0|.
$$
By Lemma \ref{lem: working horse} and \eqref{eqn: property of T0}, there exists a constant $z_{16}$ such that on $\{x\in M_s|d(x)>d_1+1\}$, we have 
$$
|F_a^-|\le \frac{r}{4\sqrt{2}}(1+\frac{z_{16}}{r})(1-|\alpha|^2)+z_{16}.
$$
In conclusion, there is a constant $z_{17}$ such that on $\{x\in M_s|d(x)>d_1+1\}$,
$$
|F_a^{\pm}|\le \frac{r}{4\sqrt{2}}(1+\frac{z_{17}}{r})(1-|\alpha|^2)+z_{17}.
$$

\paragraph{\bf Step 4: Pointwise estimates of $|\nabla_a\alpha|$ and
$|\nabla_A'\beta|$.}
Let $$y=|\nabla_a\alpha|^2+r|\nabla_A'\beta|^2.$$
Recall that the function $u$ is defined by \eqref{eqn_deinition_u_from_alpha_beta}.
By \cite[(2.43)]{taubes1996sw}, there exists a constant $z_{18}$ such that
$$
\frac{1}{2}d^*d(y-z_{18}\cdot r\cdot u)+\frac{r}{4}|\alpha|^2(y-z_{18}\cdot r\cdot u) \le 0.
$$
By \eqref{eqn: C0 of alpha}, \eqref{eqn: C0 of beta}, and Lemma \ref{lem: working horse}, therer exists a constant $z_{19}$ such that
$$
|\nabla_a\alpha|^2+r|\nabla_A'\beta|^2 =y \le z_{19}\cdot r \cdot (1-|\alpha|^2)+z_{19}.
$$

\paragraph{\bf Step 5: Exponential decay of $|\nabla_a\alpha|$, $|\nabla_A'\beta|$, and $|\beta|$.} Let
$$
y_1=|\nabla_a\alpha|^2+\frac{r}{32}|\nabla_A'\beta|^2+\frac{r^2}{16\,z_{20}}|\beta|^2.
$$
By \cite[(4.15)]{taubes1996sw},
one can choose $z_{20}$ sufficiently large, such that there exists a constant $z_{21}$, such that\footnote{The derivation of \cite[(4.15)]{taubes1996sw} only used the pointwise estimates of $\alpha$, $\beta$, $F_a$, $\nabla_a\alpha$ and $\nabla_A '\beta$ from \cite[Section 2]{taubes1996sw}, and it does not depend on the refined pointwise estimate of $F_a^-$ developed in \cite[Section 3d]{taubes1996sw}. Therefore, the inequalities obtained from Step 2 to Step 4 are sufficient for deriving \eqref{eqn: step 5}.}
\begin{equation}\label{eqn: step 5}
\frac{1}{2}d^*dy_1+\frac{r}{4}|\alpha|^2y_1\le \big(z_{21}\cdot r\cdot(1-|\alpha|^2)+\frac{r}{8})y_1.
\end{equation}
By Lemma \ref{lem: pointwise convergence}, there exists a constant $d_2$ such that on $\{x\in M_s|d(x)>d_2\}$,
$$|1-|\alpha|^2|<\min \Big\{\frac{1}{16\,z_{21}},\frac{1}{8}\Big\}.$$
Then \eqref{eqn: step 5} implies that on $\{x\in M_s|d(x)>d_2\}$,
$$
\frac{1}{2}d^*dy_1+\frac{r}{32}y_1 \le 0.
$$
By Lemma \ref{lem: working horse}, there are constants $z_{22}$, $z_{23}$ such that on $\{x\in M_s|d(x)>d_2+1\}$,
\begin{equation} \label{decay estimate step 5}
y_1<z_{22}\cdot e^{\sqrt{r}\cdot(d-d_2)/z_{23}},
\end{equation}

\paragraph{\bf Step 6: Exponential decay of $|1-|\alpha|^2|$.}
By \cite[(2.3)]{taubes1996sw},
$$
\frac{1}{2}d^*d|\alpha|^2+|\nabla_a\alpha|^2+\frac{r}{4}|\alpha|^2(|\alpha|^2-1+|\beta|^2)
+ \alpha \boxtimes \nabla_A'\beta + \alpha \boxtimes \beta=0,
$$
where $\boxtimes$ are pointwise bilinear operators defined by the metric and the symplectic form.
A straight forward calculation shows
\begin{align*}
\frac{1}{4}d^*d|1-|\alpha|^2|^2 =& \big(\frac{1}{2}d^*d
                                                (1-|\alpha|^2)\big)
                                    \cdot (1-|\alpha|^2)
                                    -\frac{1}{2}|\nabla_a|\alpha|^2|^2 \\
                                 =& -\frac{r}{4}|\alpha|^2
                                     |1-|\alpha|^2|^2
                                 +|\nabla_a\alpha|^2\cdot(1-|\alpha|^2)\\
                                 &\quad+\frac{r}{4}|\alpha|^2|\beta|^2
                                     (1-|\alpha|^2)
                                  +(1-|\alpha|^2)\cdot(\alpha\boxtimes
                                      \nabla_A'\beta + \alpha\boxtimes
                                          \beta).
\end{align*}
The equation above and \eqref{decay estimate step 5} imply there are constants $z_{24}, z_{25}$, such that on $\{x\in M_s|d(x)>d_2+1\}$,
$$
\frac{1}{4}d^*d|1-|\alpha|^2|^2+\frac{r}{4}|\alpha|^2|1-|\alpha|^2|^2 \le z_{24}\cdot e^{(d-d_2)\sqrt{r}/z_{25}}.
$$
By Lemma \ref{lem: working horse}, there exist constants $z_{26}, z_{27}$ such that 
\begin{equation}\label{decay estimate step 6}
|1-|\alpha|^2|^2 < z_{26}\cdot e^{(d-d_2)\sqrt{r}/z_{27}},
\end{equation}
on $\{x\in M_s|d(x)>d_2+1\}$.

\paragraph{\bf Step 7: Exponential decay of $|F_a|$.}
The exponential decay for $|F_a^+|$ follows from \eqref{eqn: F_a by tensor}, \eqref{decay estimate step 5} and \eqref{decay estimate step 6}. Recall that $s=|F_a^-|$. By \cite[(2.19)]{taubes1996sw}, there exists a constant $z_{28}$ such that
\begin{multline*}
\frac{1}{2}d^*ds+\frac{r}{4}(|\alpha|^2+|\beta|^2)s\le |\mathcal{R}|s+
            \frac{r}{4\sqrt{2}}(|\nabla_a\alpha|^2+|\nabla_A'\beta|^2)\\
            +z_{28}\cdot r(|\alpha||\beta|+|\alpha||\nabla_A'\beta|
            +|\beta||\nabla_a\alpha|+|\beta|^2).
\end{multline*}
Therefore \eqref{decay estimate step 5} shows that there exist constants $z_{29}, z_{30}$, such that
 if $|\alpha|>7/8$, $r>16\sup|\mathcal{R}|$, and $d(x)>d_2+1$, then
$$
\frac{1}{2}d^*ds+\frac{r}{8}|\alpha|^2s\le z_{29}\cdot e^{(d-d_2)\sqrt{r}/z_{30}},
$$
By Lemma \ref{lem: working horse}, this implies there are constants $z_{31},z_{32}$, and a positive real number $d_3$ which may depend on $(A,\phi)$, such that
\begin{equation}\label{decay estimate step 7}
s< z_{31}\cdot e^{(d-d_3)\sqrt{r}/z_{32}}
\end{equation}
on $\{x\in M_s|d(x)>d_3\}$.

The proposition then follows from \eqref{decay estimate step 5}, \eqref{decay estimate step 6}, and \eqref{decay estimate step 7}.
\end{proof}

\section{Uniform exponential decay of $E_r(A,\phi)$}\label{sec: uniform exponential decay}
This section shows that the constant $d_0$ in Proposition \ref{prop: exponential decay on symplectic ends} can be chosen to depend only on $r$, not on the solution $(A,\phi)$. Let $X,Z,M_s,M_c,\frs,\bS,\omega,\theta, A,\phi,A_0$ be as in Section \ref{sec: exponential decay}. Recall that $z_i$ denotes constants that only depend on $X, M_s, M_c, \theta$, and the terms $\hat \tau$ and $\eta$ in \eqref{eqn: definition of perturbation}. The constant $r_0$ is a positive real number that depends on the same set of data, and the value of $r_0$ may increase as the proof proceeds. We will require $r > r_0$  in \eqref{eqn: perturbation on symplectic ends}.

\subsection{An energy identity on $M_s$}
Recall that $\phi|_{M_s}$ decomposes as $\phi=\sqrt{r}(\alpha+\beta)$, and $a=A|_{M_s}-A_0$.
For $F\in \Lambda^2T^*M_s\otimes\mathbb{C}$, define $F^\omega=\frac{1}{2}\langle\omega,F\rangle\in \mathbb{C}$.
The following lemma is a rescaled version of \cite[Equation (18)]{kronheimer1997monopoles}.
\begin{lem} \label{lem: integration by parts}
Let $\chi$ be a smooth cut-off function on $M_s$ such that $\supp\chi$ is contained in the interior of $M_s$, and $\chi =1$ for all $x\in M_s$ with $d(x)>1$. Then we have
\begin{multline} \label{eqn: integration by parts}
\int_{M_s}\Big( \frac{r}{2}|\bar{\partial}_a(\chi\alpha) + \bar{\partial}_a^*(\chi\beta)|^2+
2|iF_a^\omega-\frac{r}{8}(1-|\chi\alpha|^2+|\chi\beta|^2)|^2+2|F^{0,2}_a-\frac{r}{4}{(\chi\alpha)}^*(\chi\beta)|^2\\
+\frac{r}{2}iF_a^\omega-2|iF_a^\omega|^2-2|F_a^{0,2}|^2 \Big)\\
= \int_{M_s}\Big( \frac{r}{4}|\nabla_a(\chi\alpha)|^2+\frac{r}{4}|\nabla_{A_1+a}(\chi\beta)|^2+\frac{r}{2}(iF_{A_1}^\omega)|\chi\beta|^2 \\
+\frac{r^2}{32}(1-|\chi\alpha|^2-|\chi\beta|^2)^2+\frac{r^2}{8}|\chi\beta|^2-rRe\langle N \circ \partial_a(\chi\alpha),\chi\beta\rangle\Big).
\end{multline}
Where $A_1$ is the unique unitary connection on $T^{0,2}M_s$ such  that $\nabla_{A_1}^{1,0}=\partial$, and $N:T^{1,0}M_s\to T^{0,2} M_s$ is the Nijenhuis tensor.
\end{lem}

\begin{rmk}
If $\partial M_s=\emptyset$, we may take $\chi=1$ on $M_s$.
\end{rmk}

\begin{proof}
The identity follows from and integration by parts Weitzenb\"ock formulas.

For a constant $d_0>1$, let $\chi_{d_0}$  be a smooth function on $M_s$ such that $\chi_{d_0} = 1$ for all $x$ with $d(x)\le d_0$, and $\chi_{d_0} = 0$ for all $x$ with $d(x)\ge d_0+2$, and $|\nabla\chi_{d_0}|\le 1$, $|\chi_{d_0}|\le 1$. Then integration by parts yields
$$
\int_{M_s} \langle \chi_{d_0}\bar{\partial}_a(\chi\alpha) , \bar{\partial}_a^*(\chi\beta) \rangle 
= \int_{M_s} \langle \bar{\partial}_a\big(\chi_{d_0}\bar{\partial}_a(\chi\alpha)\big), \chi\beta \rangle. 
$$
On the other hand, there exists a constant $z_1$ such that
\begin{align*}
& \Bigg|
\int_{M_s} \langle \chi_{d_0}\bar{\partial}_a(\chi\alpha) , \bar{\partial}_a^*(\chi\beta) \rangle
  -
  \int_{\{x\in M_s|d(x)\le d_0+2\}} \langle \bar{\partial}_a(\chi\alpha) , \bar{\partial}_a^*(\chi\beta) \rangle
  \Bigg| 
\\
\le
&  \int_{\{x\in M_s|d_0\le d(x)\le d_0+2\}} |\bar{\partial}_a(\chi\alpha) |\cdot|\bar{\partial}_a^*(\chi\beta)| 
\\
\le &
z_1 \int_{\{x\in M_s|d_0\le d(x)\le d_0+2\}} E_r(A,\phi),
\end{align*}
and a constant $z_2$ such that
\begin{align*}
& \Bigg|
\int_{M_s} \langle \bar{\partial}_a\big(\chi_{d_0}\bar{\partial}_a(\chi\alpha)\big), \chi\beta \rangle
 - 
 \int_{\{x\in M_s|d(x)\le d_0+2\}} \langle\bar{\partial}_a\bar{\partial}_a(\chi\alpha), \chi\beta\rangle\Bigg|
\\
\le &\int_{\{x\in M_s|d_0\le d(x)\le d_0+2\}} |\nabla\chi|\cdot |\bar{\partial}_a(\chi\alpha)|\cdot|\chi\beta|+\langle\bar{\partial}_a\bar{\partial}_a(\chi\alpha), \chi\beta\rangle
\\
\le & z_2 \int_{\{x\in M_s|d_0\le d(x)\le d_0+2\}} E_r(A,\phi) + E_r(A,\phi)^{1/2}\cdot |\bar{\partial}_a\bar{\partial}_a(\chi\alpha)|.
\end{align*}

Let $\epsilon_0$ be the constant defined by \eqref{eqn_def_epsilon0}. 
By elliptic bootstrapping, there exist constants $z_3, z_4$ such that for all $x$ with $d(x)>\epsilon_0$,
$$
|\bar{\partial}_a\bar{\partial}_a(\chi\alpha)(x)| \le z_3 \Big(\int_{B_x(\epsilon_0)} E_r(A,\phi) + r+1\Big)^{z_4}.
$$
Let $d_0\to +\infty$, and suppose $r_0$ is sufficiently large.
It then follows from Proposition \ref{prop: exponential decay on symplectic ends}, the Bishop-Gromov volume comparison theorem, and the estimates above that 
\begin{equation}\label{eqn_integration_by_parts_no_boundary_term}
\int_{M_s} \langle \bar{\partial}_a(\chi\alpha) , \bar{\partial}_a^*(\chi\beta) \rangle 
= \int_{M_s} \langle \bar{\partial}_a\bar{\partial}_a(\chi\alpha), \chi\beta \rangle.
\end{equation}
 Similarly, for $r_0$ sufficiently large, we have the following identities:
\begin{align*}
&\int_{M_s} \langle \bar{\partial}_a(\chi\alpha) , \bar{\partial}_a^*(\chi\beta) \rangle 
= \int_{M_s} \langle \bar{\partial}_a\bar{\partial}_a(\chi\alpha), \chi\beta \rangle , \\
&\int_{M_s} \langle \bar{\partial}_a(\chi\alpha) , \bar{\partial}_a(\chi\alpha) \rangle 
= \int_{M_s} \langle \bar{\partial}_a^*\bar{\partial}_a(\chi\alpha), \chi\alpha \rangle ,\\
&\int_{M_s} \langle \bar{\partial}_a^*(\chi\beta) , \bar{\partial}_a^*(\chi\beta) \rangle 
= \int_{M_s} \langle \bar{\partial}_a\bar{\partial}_a^*(\chi\beta), \chi\beta \rangle, \\
&\int_{M_s} \langle \nabla_a(\chi\alpha) , \nabla_a(\chi\alpha) \rangle =
\int_{M_s} \langle \nabla_a^*\nabla_a(\chi\alpha), \chi\alpha \rangle, \\
&\int_{M_s} \langle \nabla_{A_1+a}(\chi\beta) , \nabla_{A_1+a}(\chi\beta) \rangle =
\int_{M_s} \langle \nabla_{A_1+a}^*\nabla_{A_1+a}(\chi\beta), \chi\beta \rangle .
\end{align*}

On the other hand, by the Weitzenb\"ock formulas \cite[(12), (13)]{kotschick1995seiberg},
\begin{align*}
\bar{\partial}_a^*\bar{\partial}_a(\chi\alpha) &=\frac{1}{2}(\nabla^*_a\nabla_a(\chi\alpha)-2iF_a^\omega (\chi\alpha)),\\
\bar{\partial}_a\bar{\partial}_a^*(\chi\beta) &=\frac{1}{2}(\nabla_{A_1+a}^*\nabla_{A_1+a}(\chi\beta)+2iF_{A_1+a}^\omega(\chi\beta)).
\end{align*}

 The lemma is then proved by a straightforward computation using the identities above and $\bar{\partial}_a^2(\chi\alpha) = F_a^{0,2}(\chi\alpha) - N\circ \partial_{a}(\chi\alpha)$.
\end{proof}

\subsection{Uniform energy bound}
\begin{prop}\label{prop: energy bound}
There exists a constant $r_0$, such that for all $r>r_0$, there exists a constant $C$ which may depend on $r$ with the following property. For all $(A,\phi)\in \moduli{k}$ that solves \eqref{SW}, we have
\begin{equation}\label{eqn: energy bound}
\int_{M_s}E_r(A,\phi)<C.
\end{equation}
\end{prop}

\begin{rmk}
A similar energy estimate was proved by \cite[Lemma 3.17]{kronheimer1997monopoles} for AFAK ends. The last paragraph of the proof of \cite[Lemma 3.17]{kronheimer1997monopoles} claimed that $$\int_{\partial K_3} a\wedge \omega$$ has a uniform bound without detailed explanation, and the detailed proof of this estimate was given by \cite{mrowka2006legendrian} after the proof of Lemma 2.2.7. However, the constant $C$ in the argument of \cite{mrowka2006legendrian} depends on the volume of the complement of the AFAK end.  If one applies the same argument from \cite{mrowka2006legendrian} to Proposition \ref{prop: energy bound} above, then the constant $C$ would be given by the volume of the complement of $M_s$, which is infinity when $M_c\neq \emptyset$. Therefore, the arguments in \cite{kronheimer1997monopoles} and \cite{mrowka2006legendrian} do not suffice in the context of this article.
\end{rmk}

If $M_c$ is non-empty, suppose $M_c=(-\infty,0]\times Y$,  let $t$ be the function on $X$ which is equal to the  projection to $(-\infty,0]$ on $M_c$, and is equal to zero on $M-M_c$. Let $\frt$ be the $\spinc$ structure on $Y$ induced by $\frs|_{M_c}$.

Suppose $\mathfrak{a}\in\mathcal{C}(Y,\mathfrak{t})$ is a critical point of the perturbed Chern-Simons-Dirac functional $\CSDpert=\mathcal{L}+\mathfrak{q}$ on $\mathcal{C}(Y,\frt)$, let $\gamma_{\mathfrak{a}}=(A_\mathfrak{a},\phi_\mathfrak{a})\in\mathcal{C}([-1,0]\times Y,\frs|_{[-1,0]\times Y})$ be the configuration on $[-1,0]\times Y$ which is in temporal gauge and represents the constant path at $\mathfrak{a}$. 
Recall that in Section \ref{subsection: perturbation}, the perturbation $\mathfrak{q}$ on $M_c$ is required to satisfy  $\|\mathfrak{q}\|_{\hat{\mathcal{P}}}\le 1$, 
where $\|\cdot\|_{\hat{\mathcal{P}}}$ is defined by \eqref{eqn_def_norm_strongly_tame_perturbations}. By \cite[Section 10.7]{kronheimer2007monopoles}, there is a constant $z_0$ such that
\begin{equation} \label{eqn: critical points being bounded}
\|F_{A_\mathfrak{a}^t}\|_{L^2}^2<z_0
\end{equation}
for all critical points $\mathfrak{a}$.

Choose a gauge representative of $(A,\phi)$ that is in temporal gauge on the cylindrical end $M_c$. Recall that $A_0$ is the canonical $\spinc$ connection on $M_s$. Extend $A_0$ to a smooth $\spinc$ connection on $(X,\frs)$, such that $A_0$ is in temporal gauge and is translation invariant on $M_c$. Let $a=A-A_0$, then $F_a=\frac12(F_{A^t}-F_{A_0^t})$.
By \eqref{eqn: critical points being bounded}, there exists $R_0>1$ depending on $(A,\phi)$, such that
\begin{equation}\label{eqn: L2 bound of curvature at R}
\int_{t(x)\in[-R_0-1,-R_0]} |F_a(x)|^2 <\frac12\,\Big( z_0+\int_{t(x)\in [-R_0-1,-R_0]}|F_{A_0^t}|^2+1\Big).
\end{equation}

\begin{lem}\label{lem: linear energy bound}
There are constants $z, r_0>0$ and a function $T:(\mathbb{R}^+)^2\to \mathbb{R}^+$ which depends on $X,M_c,M_s,\theta$ and the terms $\hat \tau$ and $\eta$ in  \eqref{eqn: definition of perturbation}, with the following property. Suppose $r>r_0$, and suppose there are constants $R>0, \kappa>0$ such that
\begin{equation} \label{eqn: assume L2 bound of curvature at R}
\int_{t(x)\in[-R-1,-R]} |F_a|^2 \le \kappa,
\end{equation}
then the following inequalities hold:
\begin{align}
&\int_{M_s} E_r(A,\phi) < T(\kappa,r)+z\,r^2\cdot R
            \label{eqn: linear energy bound on W-},\\
&\int_{[-R,0]\times Y} |F_a|^2 < T(\kappa,r)+z\,r^2\cdot R.  \label{eqn: linear energy bound on tube}
\end{align} 
\end{lem}

\begin{proof}
We use $T_i$ to denote the constants that may depend on $\kappa,r$ but are independent of $(A,\phi)$.

Recall that $F_a^\omega=\frac{1}{2}\langle\omega,F_a\rangle\in i\mathbb{R}$.
On $M_s$, equation \eqref{SW} decomposes as
\begin{align*}
& \bar\partial_a\alpha+\bar\partial_a^*\beta =0, 
\\
& F_a^\omega = -\frac{ir}{8}(1-|\alpha|^2+|\beta|^2), 
\\
& F_a^{0,2}=\frac{r}{4}\alpha^*\beta.
\end{align*}
By Proposition \ref{uniform C0 bound}, Lemma \ref{lem: integration by parts}, and the equations above, there exists a constant $T_1$ depending on $r$ such that
\begin{multline}
T_1+ \int_{M_s} \big(\frac{r}{2}iF_a^\omega-2|iF_a^\omega|^2-2|F_a^{0,2}|^2\big)\ge\int_{M_s}\Big( \frac{r}{4}|\nabla_a\alpha|^2
 +\frac{r}{4}|\nabla_{A_1+a}\beta|^2
 \\
 +\frac{r}{2}(iF_{A_1}^\omega)|\beta|^2 
+\frac{r^2}{32}(1-|\alpha|^2-|\beta|^2)^2 
+\frac{r^2}{8}|\beta|^2-rRe\langle N \circ \partial_a\alpha,\beta\rangle\Big).
\label{eqn_energy_inequality_before_rearrange}
\end{multline}
Suppose $r_0$ is sufficiently large, then for all $r>r_0$, we have
\begin{align}
\frac{r}{8}|\nabla_a\alpha|^2 + \frac{r^2}{32}|\beta|^2 &\ge |rRe\langle N \circ \partial_a\alpha,\beta\rangle|,
\label{eqn_control_Nijenhuis_tensor_in_energy_identity_for_rearrangement}
\\
\frac{r}{4}|\nabla_{A_1+a}\beta|^2 + \frac{r^2}{32}|\beta|^2 &\ge |\nabla_A'\beta|^2 + |\frac{r}{2}(iF_{A_1}^\omega)|\cdot |\beta|^2.
\label{eqn_control_beta_terms_in_energy_identity_for_rearrangement}
\end{align}
Therefore by \eqref{eqn_energy_inequality_before_rearrange}, for $r_0$ sufficiently large, we have
\begin{equation} \label{eqn: bound energy by integration by parts}
\int_{M_s}|1-|\alpha|^2-|\beta|^2|^2+|\beta|^2+|\nabla_a \alpha|^2+|\nabla_A'\beta|^2+|F_a^+|^2\le T_1+ \int_{M_s}\frac{r}{2}iF_a^\omega.
\end{equation}

By \eqref{eqn: assume L2 bound of curvature at R}, \cite[Lemma 5.1.2]{kronheimer2007monopoles}, and Coulomb gauge fixing, there exists a unitary connection $a'$ of the trivial $\mathbb{C}$--bundle on $\{x|t(x)\in[-R-1,-R]\}$, such that:
\begin{enumerate}
\item $\|a-a'\|_{L_1^2([-R-1,-R]\times Y)} < T_2$, for some constant $T_2$ depending on $\kappa$,
\item $a'=a$ when $t\in[-R-\frac{1}{3},-R]$,
\item $F_{a'}=0$, when $t\in[-R-1,-R-\frac{2}{3}]$.
\end{enumerate}
Extend $a'$ to $\{x|t(x)> -R\}$ by taking $a'=a$ when $t> -R$.

Recall that by Lemma \ref{lem: pointwise convergence}, there exists a constant $d_0$, which may depend on $(A,\phi)$, such that $|\alpha(x)|\ge \frac12$ when $d(x)\ge d_0$. Also recall that $\Phi_0$ is the canonical section of $\bS^+|_{M_s}$ given by $1\in\Gamma(M_s,T^{0,0}M_s)$. Therefore we can take a gauge representative of $(A,\phi)$ such that $\alpha\in\mathbb{R}\cdot\Phi_0$ when $d(x)\ge d_0$. Without loss of generality, assume $(A,\phi)$ satisfies the above property. Notice that $|\alpha(x)|\ge \frac12$ and $\alpha\in\mathbb{R}\cdot\Phi_0$ imply $|\nabla_a\alpha|\ge \frac12|a|$. Therefore by Proposition \ref{prop: exponential decay on symplectic ends}, there exist constants $z_1$, $z_2$, $d_1>0$, where $d_1$ may depend on $(A,\phi)$, such that 
\begin{equation}\label{eqn: exponential decay of a}
|a'|=|a|\le 2|\nabla_a\alpha|\le z_1 e^{-\sqrt{r}\cdot(d-d_1)/z_2} 
\end{equation}
for all $x\in M_s$ with $d(x)>d_1$.  

Recall that by Condition (3) of Definition \ref{def_cylindrical_ESBG_end}, the ESBG structure can be extended to a neighborhood of $M_s$, therefore we can smoothly extend $\theta$ to a smooth 1-form on $X$ such that $\theta=0$ outside an open neighborhood of $M_s$. Extend $\omega$ to $X$ by taking $\omega=d\theta$. The extensions of $\theta$ and $\omega$ do not depend on $r$ or $(A,\phi)$.

The region $\{x|t(x)\ge -R-1\}$ is the union of the cylinder $[-R-1,0]\times Y$ and $X-M_c$. For $r$ sufficiently large, \eqref{eqn: exponential decay of a} implies the following identities via the same argument as the proof of \eqref{eqn_integration_by_parts_no_boundary_term}. 
\begin{align}
&\int_{t\ge -R-1}F_{a'}\wedge \omega =
\int_{t\ge -R-1}F_{a'}\wedge d\theta=-\int_{t\ge -R-1}dF_{a'}\wedge \theta =0, 
    \label{eqn: integral of F wedge Omega is zero}\\
&\int_{t\ge -R-1}F_{a'}\wedge F_{a'} =\int_{t\ge -R-1}F_{a'}\wedge d{a'}=-\int_{t\ge -R-1}dF_{a'}\wedge a'=0.
\label{eqn: integral of F wedge F is zero}
\end{align}
Let $Z_R=\{x\in X|t(x)\ge -R-1\}-M_s$. Then there exists a constant $z_3$ such that
\begin{equation}\label{eqn_vol_ZR_bounded_linear_R}
\Vol(Z_R)\le z_3+ R\cdot\Vol(Y).
\end{equation}
By \eqref{eqn: integral of F wedge Omega is zero},      
$$
\int_{M_s}F_{a'}\wedge\omega +\int_{Z_R}F_{a'}\wedge\omega=0,
$$
by \eqref{eqn: integral of F wedge F is zero},
$$
\int_{M_s\cup Z_R}|F_{a'}^+|^2 = \int_{M_s\cup Z_R}|F_{a'}^-|^2,
$$
and by \eqref{eqn: bound energy by integration by parts},
\begingroup
\allowdisplaybreaks
$$
\int_{M_s}(E_r(A,\phi)-|F_a^-|^2) \le T_1+\frac{r}{4}\,\Big|\int_{M_s}(iF_{a})\wedge\omega\Big| = T_1+\frac{r}{4}\,\Big|\int_{M_s}(iF_{a'})\wedge\omega\Big|. 
$$
Therefore, there exists a constant $z_4$, and constants $T_3,T_4$ that may depend on $r$, such that
\begin{align*}
 \int_{M_s}(E_r(A,\phi)-|F_a^-|^2) \le
            & T_1+\frac{r}{4}\,\Big|\int_{M_s}F_{a'}\wedge\omega\Big|\\
            = & T_1 + \frac{r}{4}\Big|\int_{Z_R}
                        F_{a'}\wedge\omega\Big|\\
            \le & T_1  +
                 \frac{r^2}{16}\int_{Z_R}|\omega|^2+ \frac{1}{4}\int_{Z_R}|F_{a'}|^2 \\
            \le & T_3 + \frac{1}{4}\int_{Z_R}|F_{a'}|^2\\
            \le & T_3 + \frac{1}{4}\int_{Z_R\cup M_s}|F_{a'}|^2\\
            = & T_3 + \frac{1}{2} \int_{Z_R\cup M_s}|F_{a'}^+|^2\\
            \le & T_3+\frac{1}{2} \int_{Z_R}|F_a^+|^2+
                \frac{1}{2}\int_{M_s}(E_r(A,\phi)-|F_a^-|^2)\\
            \le & T_4+z_4\,r^2\cdot R+
                \frac{1}{2}\int_{M_s}(E_r(A,\phi)-|F_a^-|^2),
\end{align*}
\endgroup
where the last inequality follows from Proposition \ref{uniform C0 bound} and \eqref{eqn_vol_ZR_bounded_linear_R}.
Hence 
\begin{equation}\label{eqn: bound on E-|F_-^2|}
\int_{M_s}(E_r(A,\phi)-|F_a^-|^2) \le 2\,T_4+2\,z_4r^2\, R.
\end{equation}
Recall that by the definition of $a'$, we have $\|a-a'\|_{L_1^2}< T_2$, where $T_2$ is a constant depending on $\kappa$. Therefore, there is a constant $z_5$, and a constant $T_5$ that depends on $r$, such that
\begin{align}\label{eqn: bound on int_(t< R+1)|F_a|^2}
\int_{Z_R\cup M_s}|F_a|^2 &\le T_2+ \int_{Z_R\cup M_s} |F_{a'}|^2 \nonumber\\
                        &= T_2+2\int_{Z_R\cup M_s}|F_{a'}^+|^2 
                            \nonumber\\
                        & \le T_2 + 2\int_{Z_R}|F_a^+|^2+
                            2\int_{M_s}(E_r(A,\phi)-|F_a^-|^2)\nonumber\\
                        & \le T_5 + z_5r^2\cdot R,
\end{align}
where the last inequality follows from Proposition \ref{uniform C0 bound}, \eqref{eqn_vol_ZR_bounded_linear_R}, and \eqref{eqn: bound on E-|F_-^2|}.
The lemma then follows immediately from \eqref{eqn: bound on E-|F_-^2|} and\eqref{eqn: bound on int_(t< R+1)|F_a|^2}.
\end{proof}

We also have the following estimate on $M_c$:
\begin{lem} \label{lem: midpoint energy bound}
There are constants $z_1$, $z_2$, and a function 
$$R_0:(\mathbb{R}^+)^2\to \{x\in\mathbb{R}|x>1\}$$ which depends only on $Y$, such that the following holds.
If $A,B,R>0$ satisfy 
\begin{align}
\int_{[-R,0]\times Y}|F_a|^2 &\le A+B\cdot R,
\label{eqn: assume linear L2 bound for F}
\\
|F_a^+|^2 &\le B\,\,\text{pointwise on }[-R,0]\times Y, 
\label{eqn: assume pointwise bound for F+}
\end{align}
and 
$$R >R_0(A,B),$$
then 
\begin{equation}
\int_{[-\frac{R}{2}-\frac{1}{2},-\frac{R}{2}+\frac{1}{2}]\times Y}|F_a|^2< z_1\cdot B+z_2.
\end{equation}
\end{lem}

\begin{proof}
Put $a$ in temporal gauge on $[-R,0]\times Y$, and write $a$ as a function $a(t)$ of $t$ which takes value in $\Gamma(Y, iT^*Y)$. Then 
\begin{align}
|F_a^+|=\frac{\sqrt{2}}{2}\,\big|\dot a(t) + *da(t)\big|, 
\label{eqn_Fa+_in_*da}\\
|F_a^-|=\frac{\sqrt{2}}{2}\,\big|\dot a(t) - *da(t)\big|.
\label{eqn_Fa-_in_*da}
\end{align}

Since $Y$ is closed, it follows from standard Hodge theory that the operator $*d$ is a closed, self-adjoint operator with a discrete spectrum on $L^2(Y, iT^*Y)$. Let 
$$\cdots < \lambda_{-3} < \lambda_{-2}< \lambda_{-1} <\lambda_0 =0 < \lambda_1 < \lambda_{2}<\lambda_{3}<\cdots$$
 be the eigenvalues of $*d$.
Let 
\begin{equation}\label{eqn_def_k_by_spectrum}
k_0=\max\Big\{\frac{1}{|\lambda_{-1}|},\frac{1}{|\lambda_1|},1\Big\}.
\end{equation}
Decompose $a$ as
$$
a(t)=\sum_{n=-\infty}^{+\infty} a_n(t),
$$
where $*da_n(t)=\lambda_n a_n(t)$. Let 
\begin{equation}\label{eqn_define_bn}
b_n(t)=\dot a_n(t)+\lambda_n a_n(t).
\end{equation}
By \eqref{eqn: assume pointwise bound for F+}, for all $t\in[-R,0]$ we have
$$\int_{\{t\}\times Y} |F_{a}^+|^2 \le B\cdot \Vol(Y),$$
hence by \eqref{eqn_Fa+_in_*da},
\begin{equation} \label{eqn_sum_bn_L2_upper_bound}
\sum_{n=-\infty}^{\infty} \|b_n(t)\|_{L^2}^2 \le 2B\cdot\Vol(Y).
\end{equation}
By \eqref{eqn: assume linear L2 bound for F}, 
\begin{align*}
\int_{-R}^0\|*da(t)\|^2_{L^2} \,dt&\le \frac{1}{2}\int_{-R}^0
                        \big(\|\dot a(t) + *da(t)\|^2_{L^2} 
                        + \|\dot a(t) - *da(t)\|^2_{L^2} \big)\,dt\\
                  &= \int_{[-R,0]\times Y} |F_a|^2 \le A+B\cdot R.
\end{align*}
For $R>1$, the above inequality implies
$$
\int_{-R}^{-R+1}\|*da(t)\|^2_{L^2} \le A+B\cdot R,
$$
hence there exits $t_1\in[-R,-R+1]$ such that 
\begin{equation}\label{eqn: M+CR}
\sum_{n}\lambda_n^2 \, \|a_n(t_1)\|_{L^2}^2 =\|*da(t_1)\|^2_{L^2}\le A+B\cdot R.
\end{equation}
It follows from \eqref{eqn_define_bn} that 
$$
e^{\lambda_n t}\,b_n(t) = \frac{d}{dt} (e^{\lambda_n t} a_n(t)),
$$
therefore
$$
\lambda_n a_n(t) = a_n(t_1)\cdot e^{(t_1-t)\lambda_n}\cdot\lambda_n+
                                    \int_{t_1}^t e^{\lambda_n(s-t)}\cdot 
                                    \lambda_n \cdot b_n(s)\,ds.
$$
Recall that $k_0$ is the constant defined by \eqref{eqn_def_k_by_spectrum}. If $t>k_0+t_1$, we have
$$
\|\lambda_n a_n(t)\|_{L^2}^2  
    \le 3\big({X}^2_n(t)+{Y}^2_n(t)+{Z}^2_n(t)\big),
$$
where 
\begin{align*}
{X}_n(t)&=\|a_n(t_1) \, e^{(t_1-t)\lambda_n}\lambda_n\|_{L^2(Y)},\\
{Y}_n(t)&=\|\int_{t_1}^{t-k} b_n(s) \, e^{(s-t)\lambda_n}\lambda_n\,ds\|_{L^2(Y)},\\
{Z}_n(t)&=\|\int_{t-k}^{t} b_n(s) \, e^{(s-t)\lambda_n}\lambda_n\,ds\|_{L^2(Y)}\\
&=\|\int_{0}^{k} b_n(t-s) \, e^{-s\lambda_n}\lambda_n\,ds\|_{L^2(Y)}.
\end{align*}
If $n> 0$, then $\lambda_n\ge {1}/{k_0}$, hence
$$
X_n(t)\le \|\lambda_n a_n(t_1)\|_{L^2}\cdot e^{(t_1-t)/k_0}.
$$
If $R\ge 2k_0+3$, then $-R/2-1/2 > k_0+1$, hence \eqref{eqn: M+CR} and the above inequality imply
$$
\int_{-\frac{R}{2}-\frac{1}{2}}^{-\frac{R}{2}+\frac{1}{2}}\sum_{n\ge 1}X_n(t)^2\,dt
\le (A+B\cdot R)\cdot e^{-(R-3)/k_0}.
$$
By the Minkowski inequality, when $t>k_0+t_1$,
$$
Y_n(t)\le \int_{t_1}^{t-k_0} \|b_n(s)\|_{L^2} \, e^{(s-t)\lambda_n}\lambda_n \,ds.
$$
Notice that if $t-s>k_0$, then $e^{(s-t)\lambda}\cdot \lambda$ is decreasing with respect to $\lambda$ for all $\lambda\ge 1/k_0$,
hence 
\begin{align*}
\Big(\sum_{n\ge 1}Y_n(t)^2\Big)^{1/2} &\le \Big(\sum_{n\ge 1}\big(\int_{t_1}^{t-k_0} \|b_n(s)\|_{L^2} \, e^{(s-t)\lambda_n}\lambda_n \,ds\big)^2\Big)^{1/2} \\
 &\le \Big(\sum_{n\ge 1}\big(\int_{t_1}^{t-k_0} \|b_n(s)\|_{L^2} \, e^{(s-t)\lambda_1}\lambda_1 \,ds\big)^2\Big)^{1/2}.
 \end{align*}
 By the Minkowski inequality again and \eqref{eqn_sum_bn_L2_upper_bound}, we have
 \begin{align*} 
  & \Big(\sum_{n\ge 1}\big(\int_{t_1}^{t-k_0} \|b_n(s)\|_{L^2} \, e^{(s-t)\lambda_1}\lambda_1 \,ds\big)^2\Big)^{1/2}
  \\
 \le & \int_{t_1}^{t-k_0}\Big(\sum_{n\ge 1}\|b_n(s)\|_{L^2}^2\Big)^{1/2}e^{(s-t)\lambda_1}\lambda_1\,ds \\
 \le & \sqrt{2B\cdot\Vol(Y)} \int_{t_1}^{t-k_0} e^{(s-t)\lambda_1}\lambda_1\,ds \\
 \le & \sqrt{2B\cdot\Vol(Y)} \int_{-\infty}^{0} e^{s\lambda_1}\lambda_1\,ds = \sqrt{2B\cdot\Vol(Y)}.
\end{align*}
Therefore when $R>2k_0+3$, we have
$$
\int_{-\frac{R}{2}-\frac{1}{2}}^{-\frac{R}{2}+\frac{1}{2}}\sum_{n\ge 1}Y_n(t)^2\,dt
\le 2B\cdot \Vol(Y).
$$
As for $Z_n$, the Minkowski inequality gives the following estimates when $R>2k_0+3$:
\begin{align*}
\Big(\int_{-\frac{R}{2}-\frac{1}{2}}^{-\frac{R}{2}+\frac{1}{2}} Z_n(t)^2 \,dt \Big)^{1/2} &\le \Big(\int_{-\frac{R}{2}-\frac{1}{2}}^{-\frac{R}{2}+\frac{1}{2}} \big(\int_0^{k_0} \|b_n(t-s)\|_{L^2}\cdot \, e^{-s\lambda_n}\lambda_n\,ds\big)^2 \,dt \Big)^{1/2} \\
&\le\int_0^{k_0} \Big(\int_{-\frac{R}{2}-\frac{1}{2}}^{-\frac{R}{2}+\frac{1}{2}} \|b_n(t-s)\|_{L^2}^2\,dt\Big)^{1/2} e^{-s\lambda_n}\lambda_n\,ds\\
&\le \int_0^{k_0} \Big(\int_{-\frac{R}{2}-\frac{1}{2}-k_0}^{-\frac{R}{2}+\frac{1}{2}}\|b_n(t)\|_{L^2}^2\,dt \Big)^{1/2} e^{-s\lambda_n}\lambda_n\,ds\\
&\le \Big(\int_{-\frac{R}{2}-\frac{1}{2}-k_0}^{-\frac{R}{2}+\frac{1}{2}}\|b_n(t)\|_{L^2}^2\,dt \Big)^{1/2}\int_0^{+\infty} e^{-s\lambda_n}\lambda_n\, ds\\
&=\Big(\int_{-\frac{R}{2}-\frac{1}{2}-k_0}^{-\frac{R}{2}+\frac{1}{2}}\|b_n(t)\|_{L^2}^2\,dt \Big)^{1/2}.
\end{align*}
Therefore, when $R>2k_0+3$,
\begin{align*}
\int_{-\frac{R}{2}-\frac{1}{2}}^{-\frac{R}{2}+\frac{1}{2}}\sum_{n\ge 1}Z_n(t)^2\,dt
&\le \int_{-\frac{R}{2}-\frac{1}{2}-k_0}^{-\frac{R}{2}+\frac{1}{2}}\sum_{n\ge 1}\|b_n(t)\|_{L^2}^2\,dt \\
&\le (k_0+1)2B\cdot\Vol(Y).
\end{align*}
Combining the estimates above, when $R>2k_0+3$, we have
\begin{align*}
\int_{-\frac{R}{2}-\frac{1}{2}}^{-\frac{R}{2}+\frac{1}{2}}\sum_{n\ge 1}\|\lambda_n a_n(t)\|_{L^2}^2  \,dt
    &\le \int_{-\frac{R}{2}-\frac{1}{2}}^{-\frac{R}{2}+\frac{1}{2}}\sum_{n\ge 1}3\big({X}^2_n(t)+{Y}^2_n(t)+{Z}^2_n(t)\big) \\
    &\le 6(k_0+2)B\, \Vol(Y)+ 3(A+B\cdot R)\cdot e^{-(R-3)/k_0}.
\end{align*}
On the other hand, there exists a $t_2\in[-1,0]$ such that
$$
\sum_{n}\lambda_n^2 \|a_n(t_2)\|_{L^2}^2 \le A+B\cdot R.
$$
If $n<0$, we have the identity
$$
\lambda_n a_n(t) = a_n(t_2)\cdot e^{(t-t_2)(-\lambda_n)}\cdot\lambda_n-
                                    \int_{t}^{t_2} e^{(-\lambda_n)(t-s)}\cdot 
                                    \lambda_n \cdot b_n(s)\,ds.
$$
When $R>2k_0+3$, a similar argument gives
$$
\int_{-\frac{R}{2}-\frac{1}{2}}^{-\frac{R}{2}+\frac{1}{2}}\sum_{n\le -1}\|\lambda_n a_n(t)\|_{L^2}^2  \,dt
    \le 6(k_0+2)B\, \Vol(Y) +3(A+B\cdot R)\cdot e^{-(R-3)/k_0}.
$$
Therefore, when $R>2k_0+3$
\begin{align*}
\int_{-\frac{R}{2}-\frac{1}{2}}^{-\frac{R}{2}+\frac{1}{2}} \|*da\|_{L^2}^2\,dt&=
\int_{-\frac{R}{2}-\frac{1}{2}}^{-\frac{R}{2}+\frac{1}{2}}\sum_{n=-\infty}^{+\infty}\|\lambda_n a_n(t)\|_{L^2}^2 \,dt\\
&\le 12(k_0+2)B\, \Vol(Y) +6(A+B\cdot R)\cdot e^{-(R-3)/k_0}.
\end{align*}
Notice that by \eqref{eqn_Fa+_in_*da}, \eqref{eqn_Fa-_in_*da},
\begin{align*}
\int_{[-\frac{R}{2}-\frac{1}{2},-\frac{R}{2}+\frac{1}{2}]\times Y}\big||F_{a}^-|-|F_{a}^+|\big|^2 \, dt\le \int_{[-\frac{R}{2}-\frac{1}{2},-\frac{R}{2}+\frac{1}{2}]\times Y} 2\|*da(t)\|_{L^2}^2\,dt.
\end{align*}
Hence 
\begin{align*}
& \int_{[-\frac{R}{2}-\frac{1}{2},-\frac{R}{2}+\frac{1}{2}]\times Y}|F_a|^2
\\
\le \,&2\int_{[-\frac{R}{2}-\frac{1}{2},-\frac{R}{2}+\frac{1}{2}]\times Y}|2F_a^+|^2+ 2\int_{[-\frac{R}{2}-\frac{1}{2},-\frac{R}{2}+\frac{1}{2}]\times Y}\big||F_a^-|-|F_a^+|\big|^2\\
\le \, & 8B\Vol(Y)+48(k_0+2)B\Vol(Y)+24(A+B\cdot R)\cdot e^{-(R-3)/k_0},
\end{align*}
and the lemma follows from the inequality above by taking $R_0(A,B)$ sufficiently large such that $R_0(A,B)>2k_0+3$, and
$$
24(A+B\cdot R)\cdot e^{-(R_0(A,B)-3)/k_0} \le 1.\phantom\qedhere\makeatletter\displaymath@qed
$$
\end{proof}

\begin{proof}[Proof of Proposition \ref{prop: energy bound}]
Pick $r_0$ sufficiently large such that Lemma \ref{lem: linear energy bound} is valid for all $r>r_0$.
Let the function $T:\mathbb{R}^2\to \mathbb{R}^+$ and the constant $z$ be as in Lemma \ref{lem: linear energy bound}. Let $z'$ be the right-hand side of \eqref{eqn: L2 bound of curvature at R}. Let $z_0$ be the constant $z$ in Proposition \ref{uniform C0 bound}. Let the function $R_0$ and the constants $z_1$, $z_2$ be as in Lemma \ref{lem: midpoint energy bound}. 

Let $C_1=\max\{zr^2 ,z_0^2 \,r^2\}$, let $\kappa=\max\{z', \, z_1\cdot C_1+z_2\}$. 

Let $$R_{\min}=\inf \{R\ge 1|\int_{t\in[-R,-R+1]} |F_a|^2 < \kappa\}.$$
Since $\kappa\ge z'$, it follows from \eqref{eqn: L2 bound of curvature at R} that $R_{\min}$ exists and is finite. By \eqref{eqn: linear energy bound on tube},
\begin{align*}
\int_{[-R,0]\times Y}|F_a|^2 &<T(\kappa,r)+zr^2\cdot R\\
                        &\le T(\kappa,r)+C_1\cdot R.
\end{align*}
By the definitions of $z_0$ and $C_1$, we have the following pointwise estimate
$$
|F_a^+|^2\le z_0^2\, r^2\le C_1.
$$
If $R_{\min}>R_0(T(\kappa,r),C_1)$ and $R_{\min}>1$, take $R'={(R_{\min}+1)}/{2}$, then Lemma \ref{lem: midpoint energy bound} gives
$$
\int_{t\in[-R',-R'+1]}|F_a|^2<z_1\cdot C_1+z_2
\le \kappa.
$$
Since $R'<R$, this contradicts the definition of $R_{\min}$. Therefore,
\begin{equation}\label{eqn_upper_bound_Rmin}
R_{\min}\le \max\{R_0(T(\kappa,r),C_1), 1\},
\end{equation}
hence by \eqref{eqn: linear energy bound on W-},
\begin{align*}
\int_{M_s}E_r(A,\phi)&<T(\kappa,r)+z\,r^2\cdot R_{\min}\\
&\le T(\kappa,r)+z\,r^2\cdot 
\max\{R_0(T(\kappa,r),C_1), 1\},
\end{align*}
and Proposition \ref{prop: energy bound} is proved.
\end{proof}

\subsection{Uniform exponential decay of $E_r(A,\phi)$}

We can now prove the uniform exponential decay of $E_r(A,\phi)$. Recall that the function $d$ is defined on $M_s$ as follows. For each connected component $M_s^{(k)}$ of $M_s$, if $\partial M_s^{(k)}$ is nonempty, then $d$ is the distance function to $\partial M_s^{(k)}$ on $M_s^{(k)}$. Otherwise, fix a point $x^{(k)}\in M_s^{(k)}$, and $d$ is the distance function to $x^{(k)}$ on $M_s^{(k)}$. The following is a re-statement of Theorem \ref{thm_uniform_exponential_decay}.

\begin{thm} \label{thm: uniform decay}
There exist constants $r_0,z>0$ with the following significance. For every $r>r_0$, there is a constant $C$ depending on $r$, such that if $r>r_0$ and $(A,\phi)\in\moduli{k}$ solves \eqref{SW}, then the inequality
\begin{equation}\label{eqn: uniform energy bound}
E_r(A,\phi)< C\,e^{-\sqrt{r}\cdot d/z}
\end{equation}
holds on $M_s$. \qed
\end{thm}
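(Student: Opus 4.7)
The plan is to combine Proposition \ref{prop: energy bound} and Proposition \ref{prop: uniform decay when energy is bounded} directly, since the theorem is stated as a corollary of these two results and nothing more is required.

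First, I would apply Proposition \ref{prop: energy bound} to obtain a constant $r_0 > 0$ (depending only on $M$) and a constant $C_1$ depending on $M$ and $r$, such that any solution $(A,\phi) \in \moduli{k}$ of \eqref{SW} with $r > r_0$ satisfies
\begin{equation*}
\sum_j \int_{G_j} E < C_1.
\end{equation*}
Since each term on the left-hand side is nonnegative, this yields in particular $\int_{G_j} E < C_1$ on each individual symplectic end $G_j$.

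Next, I would feed this uniform $L^1$ bound into Proposition \ref{prop: uniform decay when energy is bounded}. That proposition provides a universal constant $z$ (depending only on $M$) and, given the energy bound $C_1$, produces a constant $d_0$ depending on $C_1$ and $r$ (hence on $M$ and $r$) such that
\begin{equation*}
E(x) < e^{-\sqrt{r}\cdot(d(x) - d_0)/z}
\end{equation*}
for every $x \in G_j$ with $d(x) > d_0$. Setting $C := d_0$ then gives the statement of the theorem, with $r_0$ and $z$ depending on $M$ and $C$ depending on $M$ and $r$.

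There is essentially no obstacle here beyond bookkeeping: the work is entirely contained in the two prior propositions, and the only subtlety is to verify that the constant $C_1$ produced by Proposition \ref{prop: energy bound} is admissible as input to Proposition \ref{prop: uniform decay when energy is bounded} (which is immediate, as the latter accepts any positive constant), and that the dependence of the final $C$ on $M$ and $r$ matches the assertion.
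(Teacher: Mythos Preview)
Your proposal is correct and matches the paper's approach exactly: the paper states the theorem as an immediate corollary of Proposition~\ref{prop: energy bound} and Proposition~\ref{prop: uniform decay when energy is bounded} and supplies no further argument beyond the \qed, which is precisely the two-step combination you describe.
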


\begin{proof}
First, we prove that the constant $d(\delta)$ given by Lemma \ref{lem: pointwise convergence} is uniform for all $(A,\phi)$. More precisely, for all $r>r_0$ and $\delta>0$, there exists a constant $d(\delta)$ depending on $r$ and $\delta$, such that for all $x\in M_s$ with $d(x)>d(\delta)$ and $(A,\phi)\in\moduli{k}$ solving \eqref{SW}, we have 
$$E_r(A,\phi)(x)\le \delta.$$ 

Assume the contrary, then there exist $\delta_0>0$, a sequence of solutions $(A_n,\phi_n)$, and a sequence of points
$x_n\in M_s$ with 
$\lim_{n\to\infty}d(x_n)=+\infty$, such that 
$$E_r(A_n,\phi_n)(x_n)\ge \delta_0$$ for all $n$.

Let $g$ be the metric on $M_s$. By Proposition \ref{prop: properness} and the $C^0$ bound of $|\phi_n|$ given by Lemma \ref{C0 bound on the negative end}, a subsequence of $\{(M_s,g,\frs,x_n,A_n,\phi_n)\}_{n\ge 1}$ converges to a limit $(\widetilde{M},\tilde{g},\tilde{\frs},\tilde{x},\tilde{A},\tilde{\phi})$. The limit manifold $(\widetilde{M},\tilde{g})$ is complete and has bounded geometry. 
Since $\nabla^k \theta$ is bounded for all $k$, by a diagonal argument and the Arzel\`a–Ascoli theorem, after taking a further subsequence, we may assume that $\theta$ converges to a 1-form $\widetilde{\theta}$ on $\widetilde{M}$, in the $C^\infty$ topology on compact subsets.
 Therefore, $\widetilde{\omega} = d\widetilde{\theta}$ is a compatible symplectic structure on $\widetilde{M}$, and $\tilde\frs$ is the canonical $\spinc$ structure on $\widetilde{M}$ induced by $\widetilde{\omega}$. In conclusion, $\widetilde{M}$ is an ESBG end with empty boundary, and $(\tilde{A},\tilde{\phi})$ is a solution to \eqref{SW} on $\widetilde{M}$.

Let $\widetilde{E}_r(\tilde{A},\tilde{\phi})$ be the energy density function of $(\tilde{A},\tilde{\phi})$ on $\widetilde{M}$. By the previous assumptions, we have
$\widetilde{E}_r(\tilde{A},\tilde{\phi})(\tilde{x})\ge \delta_0.$
The uniform energy bound given by Proposition \ref{prop: energy bound} implies
\begin{equation}\label{eqn_uniform_energy_bound_tilde_E}
\int_{\widetilde{M}} \widetilde{E}_r(\tilde{A},\tilde{\phi}) < +\infty.
\end{equation}

Apply Lemma \ref{lem: integration by parts} to $\widetilde{M},\tilde{A},\tilde{\phi}$. Since $\widetilde{M}$ is a symplectic end itself with empty boundary, we can take $\chi = 1$ on $\widetilde{M}$. Therefore when $r_0$ is sufficiently large and $r>r_0$, we have
\begin{multline}\label{eqn: integration by parts on tilde W}
\int_{\widetilde M}\Big( \frac{r}{2}|\bar{\partial}_{\tilde{a}}{\tilde{\alpha}} + \bar{\partial}_{\tilde{a}}^*{\tilde{\beta}}|^2+
2|iF_{\tilde{a}}^{\tilde{\omega}}-\frac{r}{8}(1-|{\tilde{\alpha}}|^2+|{\tilde{\beta}}|^2)|^2+2|F^{0,2}_{\tilde{a}}-\frac{r}{4}{\tilde{\alpha}}^*{\tilde{\beta}}|^2\\
+\frac{r}{2}iF_{\tilde{a}}^{\tilde{\omega}}-2|iF_{\tilde{a}}^{\tilde{\omega}}|^2-2|F_{\tilde{a}}^{0,2}|^2 \Big)\\
= \int_{\widetilde M}\Big( \frac{r}{4}|\nabla_{\tilde{a}}{\tilde{\alpha}}|^2+\frac{r}{4}|\nabla_{\tilde A_1+\tilde a}{\tilde{\beta}}|^2+\frac{r}{2} (iF_{\tilde A_1}^{\tilde{\omega}}) |\tilde{\beta}|^2  \\
+\frac{r^2}{32}(1-|{\tilde{\alpha}}|^2-|{\tilde{\beta}}|^2)^2+\frac{r^2}{8}|{\tilde{\beta}}|^2-rRe\langle \widetilde N \circ \partial_{\tilde{a}}{\tilde{\alpha}},{\tilde{\beta}}\rangle\Big),
\end{multline}
where $\tilde{A}_1$ is the unique unitary connection on $T^{0,2}\widetilde{M}$ such  that $\nabla_{\tilde{A}_1}^{1,0}=\partial$, and $\widetilde{N}:T^{1,0}\widetilde{M}\to T^{0,2} \widetilde{M}$ is the Nijenhuis tensor.

Since $(\tilde{A},\tilde{\phi})$ solves \eqref{SW} on $\widetilde{M}$, we have
\begin{align*}
& \bar{\partial}_{\tilde{a}}{\tilde{\alpha}}
+\bar{\partial}_{\tilde{a}}^*{\tilde{\beta}} =0, 
\\
& F_{\tilde{a}}^{\tilde{\omega}} = -\frac{ir}{8}(1-|\tilde{\alpha}|^2+|\tilde{\beta}|^2), 
\\
& F_{\tilde{a}}^{0,2}=\frac{r}{4}{\tilde{\alpha}}^*{\tilde{\beta}}.
\end{align*}
Therefore \eqref{eqn: integration by parts on tilde W} gives
\begin{multline} \label{eqn: energy identity on tilde W}
\int_{\widetilde M}\big(\frac{r}{2}iF_{\tilde{a}}^{\tilde{\omega}}-2|iF_{\tilde{a}}^{\tilde{\omega}}|^2-2|F_{\tilde{a}}^{0,2}|^2 \big)
= \int_{\widetilde M}\Big( \frac{r}{4}|\nabla_{\tilde{a}}{\tilde{\alpha}}|^2+\frac{r}{4}|\nabla_{\tilde A_1+\tilde a}{\tilde{\beta}}|^2
\\
+\frac{r}{2}(iF_{\tilde A_1}^{\tilde{\omega}}) |\tilde{\beta}|^2 
+\frac{r^2}{32}(1-|{\tilde{\alpha}}|^2-|{\tilde{\beta}}|^2)^2+\frac{r^2}{8}|{\tilde{\beta}}|^2-r Re \langle\tilde N  \circ \partial_{\tilde{a}}{\tilde{\alpha}},{\tilde{\beta}}\rangle\Big).
\end{multline}
When $r_0$ is sufficiently large, by \eqref{eqn_uniform_energy_bound_tilde_E} and the same proof of \eqref{eqn_integration_by_parts_no_boundary_term}, we have,
\begin{align}
\int_{\widetilde M}F_{\tilde a}^{\tilde{\omega}}=\frac{1}{2}\int_{\widetilde M}F_{\tilde a}\wedge \widetilde\omega =\frac{1}{2}\int_{\widetilde M}F_{\tilde a}\wedge d\widetilde\theta =-\frac{1}{2}\int_{\widetilde M}d(F_{\tilde a})\wedge \widetilde\theta=0,&
\\
\int_{\widetilde M}|F_{\tilde a}^+|^2
-
\int_{\widetilde M}|F_{\tilde a}^-|^2
= \int_{\widetilde M}F_{\tilde a}\wedge F_{\tilde a} = 0.&
\label{eqn_integrate_|F^+|^2=|F^-|^2_on_limit}
\end{align}
Therefore equation \eqref{eqn: energy identity on tilde W} gives
\begin{multline}\label{eqn: no nontrivial solution on symplectic manifold}
0=\int_{\widetilde M}\Big( 2|iF_{\tilde{a}}^{\tilde{\omega}}|^2+2|F_{\tilde{a}}^{0,2}|^2+\frac{r}{4}|\nabla_{\tilde{a}}{\tilde{\alpha}}|^2+\frac{r}{4}|\nabla_{\tilde A_1+\tilde a}{\tilde{\beta}}|^2+\frac{r}{2}(iF_{\tilde A_1}^{\tilde{\omega}}) |\tilde{\beta}|^2 \\
+\frac{r^2}{32}(1-|{\tilde{\alpha}}|^2-|{\tilde{\beta}}|^2)^2+\frac{r^2}{8}|{\tilde{\beta}}|^2-rRe\langle \widetilde N \circ \partial_{\tilde{a}}{\tilde{\alpha}},{\tilde{\beta}}\rangle\Big).
\end{multline}
When $r_0$ is sufficiently large, by  \eqref{eqn_energy_inequality_before_rearrange} and \eqref{eqn_control_beta_terms_in_energy_identity_for_rearrangement}, and \eqref{eqn: no nontrivial solution on symplectic manifold}, we have
\begin{equation}\label{eq: energy at infinity is zero}
0\ge \int_{\widetilde M}|\nabla_{\tilde{a}}{\tilde{\alpha}}|^2+|\nabla_{\tilde A}'{\tilde{\beta}}|^2
+(1-|{\tilde{\alpha}}|^2-|{\tilde{\beta}}|^2)^2+|{\tilde{\beta}}|^2+|F_{\tilde{a}}^+|^2.
\end{equation}
As a consequence, the integrand of the right-hand side of $\eqref{eq: energy at infinity is zero}$ is identically $0$ on $\widetilde{M}$. By \eqref{eqn_integrate_|F^+|^2=|F^-|^2_on_limit}, $|F_a^-|$ is also identically zero on $\widetilde{M}$, hence $\widetilde E_r(\tilde{A},\tilde{\phi})$ is identically zero. This contradicts the assumption that $\widetilde E_r(\tilde{A},\tilde{\phi})(\tilde x)\ge \delta_0$. In conclusion, the constant $d(\delta)$ given by Lemma \ref{lem: pointwise convergence} is uniform for all $(A,\phi)$.

Now we finish the proof of Theorem \ref{thm: uniform decay}.
Since the constant $d(\delta)$ given by Lemma \ref{lem: pointwise convergence} is uniform for all $(A,\phi)$, the constants $d_1$, $d_2$, $d_3$ in the proof of Proposition \ref{prop: exponential decay on symplectic ends} depend only on $r$ but not on $(A,\phi)$. 
Therefore the constant $d_0$ in the statement of Proposition \ref{prop: exponential decay on symplectic ends} depends only on $r$ but not on $(A,\phi)$. This proves \eqref{eqn: uniform energy bound} when $d>d_0$. By standard elliptic bootstrapping, there exists a constant $C_0$ depending on $r$ such that $E_r(A,\phi)<C_0$ pointwise. This proves the estimate for \eqref{eqn: uniform energy bound} when $d\le d_0$. Hence the theorem is proved.
\end{proof}

\subsection{Uniform decay with neck stretching}
\label{subsec_Uniform decay with neck stretching}
For $i=1,2$, suppose $X^{(i)}$ is a manifold with  cylindrical and ESBG ends, where the cylindrical end is $M_c^{(i)}$, and the ESBG end is given by $(M_s^{(i)},\omega^{(i)}=d\theta^{(i)})$. Let $Z^{(i)} = X^{(i)}-M_c^{(i)}-M_s^{(i)}$. 
Moreover, suppose there exists a non-empty oriented closed Riemannian 3-manifold $Y$, such that $M_c^{(1)}$ is given by $(-\infty,0]\times Y$, and $M_c^{(2)}$ is given by $(-\infty,0]\times (-Y)$. 

For each constant $R>0$, we can define a Riemannian manifold $X_R$ as follows. Let $X_R^{(1)}$ be the subset of $X^{(1)}$ given by $Z^{(1)}\cup M_s^{(1)}\cup  [-R,0]\times Y$, let $X_R^{(2)}$ be the subset of $X^{(2)}$ given by $Z^{(2)}\cup M_s^{(2)}\cup [-R,0]\times (-Y)$. Let $X_R$ be the manifold obtained from $X_R^{(1)}\sqcup X_R^{(2)}$ by gluing $[-R,0]\times Y$ with $[-R,0]\times (-Y)$ via $(t,x)\sim (-R-t,x)$. Then $X_R$ is a manifold with ESBG end $M_s^{(1)}\cup M_s^{(2)}$. This subsection proves that the exponential decay estimate on $X_R$ given by Theorem \ref{thm: uniform decay} is uniform for all $R$.

Let $M_s=M_s^{(1)}\cup M_s^{(2)}$, let $\theta$ be the union of $\theta^{(1)}$ and $\theta^{(2)}$ on $M_s$, and extend $\theta$ to a smooth 1-form on $X_R$ such that the support of $\theta$ is contained in $M_s\cup Z^{(1)}\cup Z^{(2)}$. Let $\omega= d\theta$ be a 2-form on $X_R$. 

Let $\frs$ be an admissible $\spinc$ structure on $X_R$, and let $(A,\phi)\in \mathcal{C}_k(X_R,\frs)$ be a solution to \eqref{SW}. Let $A_0$ be the canonical connection of $\frs|_{M_s}$, and extend $A_0$ to a smooth connection of $\frs$ that is translation-invariant on the glued image of $[-R,0]\times Y\subset M_c^{(1)}$ and $[-R,0]\times (-Y)\subset M_c^{(2)}$. 

Let $a=A-A_0$. Decompose $\phi=\sqrt{r}(\alpha+\beta)$ on $M_s$ such that $\alpha\in\Gamma(M_s,T^{0,0}M_s)$, $\beta\in\Gamma(M_s,T^{0,2}M_s)$ as before, and let $E_r(A,\phi)$ be defined by \eqref{eqn_def_Er(A,phi)}.

In this subsection, we will use $r_0$ $z$, and $z_i$ to denote the constants that depend on $X^{(1)},M_c^{(1)},M_s^{(1)},\theta^{(1)},X^{(2)},M_c^{(2)},M_s^{(2)},\theta^{(2)}$,$\frs$, and the perturbation terms of the Seiberg-Witten equations. We will use $C_i$ to denote the constants that depend on the same set of data above and also $r$, but are independent of $R$.

\begin{lem}\label{C0 estimate with neck}
There is a constant $z$, such that $|\phi|<z\cdot\sqrt{r}$, $|F_a^+|<z\cdot r$. 
\end{lem}
\begin{proof}
The proof is the same as Proposition \ref{uniform C0 bound}.
\end{proof}

\begin{lem}\label{lem: linear energy bound with neck}
There exist constants $r_0$ and $z$ with the following property. Suppose $r>r_0$, $R\ge 1$,
then there is a constant $C_0$ depending on $r$, such that
\begin{align}
&\int_{M_s} E_r(A,\phi) < C_0+z\,r^2\cdot R,
            \label{eqn: linear energy bound on symplectic end with neck}\\
&\int_{X_R-M_s} |F_a|^2 < C_0+z\,r^2\cdot R.
            \label{eqn: linear energy bound on cylinder with neck}
\end{align} 
\end{lem}

\begin{proof}
By the same argument as in \eqref{eqn: bound energy by integration by parts}, there is a constant $C_1$ depending on $r$, such that
\begin{equation} \label{eqn: bound energy by integration by parts with neck}
\int_{M_s}(E_r(A,\phi)-|F_a^-|^2)\le C_1+ \frac{r}{2}\int_{M_s}iF_a^\omega.
\end{equation}
Similar to \eqref{eqn: integral of F wedge Omega is zero} and \eqref{eqn: integral of F wedge F is zero}, we have
$$c_0:=-\int_{X_R}F_a\wedge F_a = -\int_{X_R}|F_a^+|^2+\int_{X_R}|F_a^-|^2$$
 is a topological invariant that only depends on $\frs$, and 
 $$\int_{X_R}F_a\wedge \omega=0.$$
  Therefore, there exists a constant $z_1$, and $C_2, C_3$ depending on $r$, such that 
\begin{align*}
\int_{M_s}(E_r(A,\phi)-|F_a^-|^2) & \le C_1+ \frac{r}{2}\Big|\int_{M_s}iF_a^\omega\Big|
\\
& =   C_1+ \frac{r}{4}\Big|\int_{M_s}F_a\wedge \omega\Big|
\\
                  &= C_1+\frac{r}{4}\Big| \int _{X_R-M_s}
                      F_a\wedge \omega \Big| \\
                  &\le C_1 +
                      \frac{r^2}{16} \int_{X_R-M_s}|\omega|^2
                      + \frac{1}{4} \int_{X_R- M_s} |F_a|^2 \\
                  &\le C_2 + \frac{1}{4} \int_{X_R-M_s} |F_a|^2\\
                  &\le C_2 + \frac{1}{4} \int_{X_R} |F_a|^2\\
                  &=   C_2 + \frac{c_0}{4}+
                      \frac{1}{2} \int_{X_R} |F_a^+|^2\\
                  &=   C_2 + \frac{c_0}{4}+\frac12 \int_{X_R-M_s}|F_a^+|^2 +
                      \frac{1}{2} \int_{M_s} |F_a^+|^2\\
                  &\le C_3 + z_1\,r^2\cdot R +
                      \frac{1}{2}\int_{M_s}(E_r(A,\phi)-|F_a^-|^2),
\end{align*}
where the last inequality follows from Lemma \ref{C0 estimate with neck}.
Therefore
\begin{equation}
\label{eqn_linear_energy_bound_without_F-_with_neck}
\int_{M_s}(E_r(A,\phi)-|F_a^-|^2) \le 2C_3+2z_1\,r^2\cdot R.
\end{equation}
On the other hand, by Lemma \ref{C0 estimate with neck} again and \eqref{eqn_linear_energy_bound_without_F-_with_neck}, there exist a constant $z_2$, and a constant $C_4$ depending on $r$, such that
\begin{align*}
\int_{X_R}|F_a|^2 &\le |c_0|+2\int_{X_R}|F_a^+|^2\\
                  &\le |c_0|+2\int_{X_R-M_s} |F_a^+|^2 + 
                      2\int_{M_s}(E-|F_a^-|^2)\\
                  &\le C_4+z_2\,r^2\cdot R.
\end{align*}
The lemma is then proved by combining the two inequalities above.
\end{proof}

\begin{lem}\label{thm: uniform energy bound for stretching neck}
There exists a constant $r_0>0$, such that for every $r>r_0$ there is a constant $C>0$ depending on $r$ but independent of $R$, such that if $(A,\phi)\in\mathcal{C}_k(X_R,\frs)$ solves \eqref{SW}, then we have
\begin{equation}\label{eqn_uniform_energy_bound_with_neck}
\int_{M_s}E_r(A,\phi)< C.
\end{equation}

\end{lem}

\begin{proof}
The proof follows from Lemma \ref{lem: linear energy bound with neck} and an argument similar to the proof of Proposition \ref{prop: energy bound}. 

Let the constants $z_1,\,z_2$ and the function $R_0$ be as in Lemma \ref{lem: midpoint energy bound}. Let $C_0$ and $z$ be the constants in Lemma \ref{lem: linear energy bound with neck}. Let $z'$ be the constant given by Lemma \ref{C0 estimate with neck}. Let $C_1=\max\{zr^2,z'r^2\}$, let $\kappa= z_1C_1+z_2+1$. If $R\le R_0(C_0,C_1)$ then by Lemma \ref{lem: linear energy bound with neck},
$$
\int_{M_s}E_r(A,\phi)<C_0+zr^2\,R_0(C_0,C_1),
$$ 
hence \eqref{eqn_uniform_energy_bound_with_neck} holds when $C>C_0+zr^2\,R_0(C_0,C_1)$.

If $R>R_0(C_0,C_1)$, recall that $[-R,0]\times Y\subset M_c^{(1)}$ is a subset of $X_R$. By Lemma \ref{lem: linear energy bound with neck},
$$
\int_{[-R,0]\times Y} |F_a|^2 \le C_0+zr^2\,R\le C_0+C_1\cdot R,
$$
by Lemma \ref{C0 estimate with neck},
$$
|F_a^+|\le z'r^2\le C_1,
$$
 hence Lemma \ref{lem: midpoint energy bound} gives
\begin{equation}\label{eqn_Fa_midpoint_L2_bound_on_neck}
\int_{[-R/2-1/2,-R/2+1/2]\times Y}|F_a|^2\le z_1 C_1 + z_2 < \kappa.
\end{equation}
Take 
$$
R_{\min}=\inf\{\hat R|\hat R\ge 0, \int_{[-\hat R-1,-\hat R]\times Y}|F_a|^2< \kappa\},
$$
then \eqref{eqn_Fa_midpoint_L2_bound_on_neck} implies that $R_{\min}$ exists.
Let $z^{(1)}>0$ and $T^{(1)}:(\mathbb{R}^+)^2\to\mathbb{R}^+$ be the constant and the function given by Lemma \ref{lem: linear energy bound} when applied to $X^{(1)}$, let  $C_1^{(1)} = \max\{z^{(1)}r^2,z'r^2\}$.
Then by the same proof of \eqref{eqn_upper_bound_Rmin}, we have 
$$R_{\min}\le \max \big\{ R_0(T^{(1)}(\kappa,r),C_1^{(1)}), 1\big\}.$$
Now apply Lemma \ref{lem: linear energy bound} to  $X^{(1)}$, we obtain 
\begin{align*}
\int_{M_s^{(1)}}E_r(A,\phi) &< T^{(1)}(\kappa,r)+z^{(1)}r^2\cdot R_{\min}
\\
&\le T^{(1)}(\kappa,r)+z^{(1)}r^2\cdot \max\Big\{R_0\big(T^{(1)}(\kappa,r),C_1^{(1)}\big),1\Big\},
\end{align*}
which yields a uniform upper bound for the integration of $E_r(A,\phi)$ on $M_s^{(1)}$. The upper bound for  $M_s^{(2)}$
follows from a similar argument.
\end{proof}

\begin{thm}\label{thm: uniform exponential decay for stretching neck}
There exist constants $r_0,z>0$ with the following significance. For every $r>r_0$, there is a constant $C>0$ depending on $r$ but independent of $R$, such that if $(A,\phi)\in\mathcal{C}_k(X_R,\frs)$ solves \eqref{SW} on $X_R$, then the followsing inequality holds on $M_s$:
$$
E_r(A,\phi)< C e^{-\sqrt{r}\cdot d/z}.
$$ 
\end{thm}

\begin{proof}
This follows from Lemma \ref{thm: uniform energy bound for stretching neck} and the same argument as the proof of Theorem \ref{thm: uniform decay}. 
\end{proof}

\section{Floer chains from ESBG structures}
\label{sec_Floer_chains}
Let $X$ be a Riemannian 4-manifold with cylindrical and ESBG ends, such that $M_c\cong (-\infty, 0]\times Y$ is the cylindrical end, and $(M_s,\omega=d\theta)$ is the ESBG end.
If $M_s$ is compact and $b_2^+(X)\ge 2$, after removing a small ball from $X$, we can view $X-B^4$ as a cobordism from $S^3$ to $-Y$. By the construction of \cite[Section 25]{kronheimer2007monopoles}, the cobordism $X-B^4$ induces a map
$$
\Harrow(X-B^4):\Hfrom(S^3) \to \Hto(-Y).
$$
The map $\Harrow(X-B^4)$ is only well-defined up to a sign, which can be fixed by a choice of the homology orientation of $X$. The construction above defines an element 
$$\Harrow(X-B^4)(\hat 1)\in \Hto(-Y)/\{\pm 1\},$$ 
where $\hat 1\in \Hfrom(S^3)$ is the generator of $\Hfrom(S^3)$ as a $\mathbb{Z}[U_\dagger]$--module. 
The condition $b_2^+(X)\ge 2$ is necessary to gurantee that under a generic perturbation, the solutions of the Seiberg-Witten equations on $X$ are all irreducible, namely the spinor part is not identically zero.

In this section, we will show that when $M_s$ is not compact, it is still possible to define an element in $\Hto(-Y)/\{\pm 1\}$ by counting solutions of \eqref{SW} on $X$. This is a straightforward generalization of \cite[Section 6.3]{kronheimer2007monopolesAndLens}. Theorem \ref{thm: uniform decay} implies that the moduli space analogous to the space $M(Z^+,\mathfrak{a})$ in \cite[Section 6.3]{kronheimer2007monopolesAndLens} compact. Since $M_s$ is non-compact, Condition (2) of Definition \eqref{def: configuration space} implies that every element in $\moduli{k}$ is irreducible. The sign ambiguity of $\Hto(-Y)/\{\pm 1\}$ is essential and cannot be resolved by a choice of homology orientation. This will be explained in Remark \ref{rmk_contact_case}. 

For each $\spinc$ structure $\frt$ on $Y$, fix a stongly tame perturbation $\mathfrak{q}_\frt$ that is admissible in the sense of \cite[Definition 22.1.1]{kronheimer2007monopoles} and satisfies $\|\mathfrak{q}_{\mathfrak{t}}\|_{\hat{\mathcal{P}}}\le 1$, where $\|\cdot\|_{\hat{\mathcal{P}}}$ is the norm defined by \eqref{eqn_def_norm_strongly_tame_perturbations}. Let $\mathfrak{q}$ be the set of all $\mathfrak{q}_{\mathfrak{t}}$. Let $\mathfrak{C}$ be set of isomorphism classes of critical points of the Chern-Simons-Dirac functional perturbed by $\mathfrak{q}_\mathfrak{t}$ in the blown-up configuration space $\mathcal{B}^\sigma(-Y,\frt)$ as defined in \cite[Section 4.1]{kronheimer2007monopolesAndLens} for all $\frt$ (see also \cite[Section 6]{kronheimer2007monopoles}).
 
\begin{defn}
\label{def_moduli_space_for_c(X)}
Suppose $[\mathfrak{a}]\in\mathfrak{C}$, and $\frs$ is an admissible $\spinc$ structure on $X$. Let $\mathcal{M}(X,[\mathfrak{a}],\frs)$ be the moduli space of $(A,\phi)\in\mathcal{C}(X,\frs) $ that solves \eqref{SW} and is asymptotic to $[\mathfrak{a}]$ on the cylindrical end $M_c$ after lifting to the blown-up configuration space.
\end{defn}

\begin{rmk}
For the definition of asymptoticity in the blown-up configuration space, see \cite[Definition 13.1.1]{kronheimer2007monopoles} and the paragraph above it.
\end{rmk}

For a generic choice of $\hat \tau$ in \eqref{eqn: definition of perturbation}, the moduli space $\mathcal{M}(X,[\mathfrak{a}],\frs)$ is regular for all $\mathfrak{a}$ and $\mathfrak{s}$. By the construction of \cite[Sections 20, 25.2]{kronheimer2007monopoles}, there are two systems of compatible orientations for the moduli spaces $\mathcal{M}(X,[\mathfrak{a}],\frs)$ that are different by an overall sign. Let $\mathfrak{o}$ and $-\mathfrak{o}$ be the two systems of compatible  orientations. Let $\mathcal{M}_0(X,[\mathfrak{a}],\frs)$ be the zero-dimensional components of $\mathcal{M}(X,[\mathfrak{a}],\frs)$. By Theorem \ref{thm: uniform decay} and the compactness results of Seiberg-Witten equations \cite[Lemma 25.3.1]{kronheimer2007monopoles}, there are only finitely many $[\mathfrak{a}]$ and $\frs$ such that $\mathcal{M}_0(X,[\mathfrak{a}],\frs)$ is nonempty.

\begin{defn}\label{def_Floer_chain}
Let 
$$\check{\psi}_\mathfrak{o}(X) = \sum_{\frs} \sum_{[\mathfrak{a}]\in \mathfrak{C}}\# \mathcal{M}_0(X,[\mathfrak{a}],\frs)\cdot [\mathfrak{a}]\in \mathbb{Z}[\mathfrak{C}],$$
where the elements of $\mathcal{M}_0(X,[\mathfrak{a}],\frs)$ are counted with signs using the orientation $\mathfrak{o}$, and the summation of $\frs$ goes over the isomorphism classes of admissible $\spinc$ structures over $X$ relative to $M_s$.
\end{defn}
If $M_s$ is non-compact, by Condition (2) of Definition \eqref{def: configuration space}, all the elements of $\mathcal{M}_0(X,[\mathfrak{a}],\frs)$ are irreducible. Therefore, in order for $\mathcal{M}_0(X,[\mathfrak{a}],\frs)$  to be non-empty, the critical point $\mathfrak{a}$ has to be either irreducible or boundary-stable. 
As a consequence, $\check{\psi}_\mathfrak{o}(X)$ is an element of the boundary-stable monopole Floer chain group $\check{C}_*(-Y)$ defined by \cite[(22.3)]{kronheimer2007monopoles}, and we have the following lemma.
\begin{lem}
The element $\check{\psi}_\mathfrak{o}(X)\in \check{C}_*(-Y)$ satisfies $\check{\partial}(\check{\psi}_\mathfrak{o}(X))=0$.
\end{lem}
\begin{proof}
This follows from the same proof as \cite[Lemma 6.6]{kronheimer2007monopolesAndLens}.
\end{proof}

Therefore, the homology class of $\check{\psi}_\mathfrak{o}(X)$ defines an element in the monopole Floer homology group $\Hto_\bullet(-Y)$. 
Define
\begin{equation}\label{eqn_def_c(X)}
c(X) = [\check{\psi}_\mathfrak{o}(X)] \in \Hto_\bullet(-Y)/\{\pm 1\},
\end{equation}
then $c(X)$ does not depend on the choice of the orientation $\mathfrak{o}$.
\begin{prop}
\label{prop_invariant_under_deformation}
$c(X)\in \Hto_\bullet(-Y)/\{\pm 1\}$ does not depend on the choices of $r>r_0$, the perturbations $\mathfrak{q}$, $\hat \tau$, $\eta$ in \eqref{eqn: definition of perturbation}, or the metric on $X-M_s$. Moreover, $c(X)$ is invariant under smooth deformations of the ESBG structures on $M_s$ with uniformly bounded geometry.
\end{prop}
\begin{proof}
For $i=1,2$, let $g_i$ be a metric on $X$ that is cylindrical on $M_c$, let $\omega_i=d\theta_i$ be an exact symplectic form on $M_s$, suppose $(X,g_i)$ is a manifold with cylindrical end $M_c$ and ESBG end $(M_s,\omega_i=d\theta_i)$. Assume $(X, g_0,\omega_0=d\theta_0)$ can be smoothly deformed to $(X, g_1,\omega_1=d\theta_1)$ via manifolds with cylindrical end $M_c$ and ESBG end $M_s$, such that the deformation has uniformly bounded geometry.

For $i=0,1$,
let $\hat\tau_i$, $\eta_i$ be a choice of perturbation terms in \eqref{eqn: definition of perturbation}. Let $r_i$ be a sufficiently large constact such that Theorem \ref{thm: uniform decay} holds for $(g_1,\omega_i=d\theta_i,\hat\tau,\hat\eta)$. Let $g_i^Y$ be the metric on $Y$ induced by the restriction of $g_i$ to $M_c$. Let $\mathfrak{q}_i$ be a collection of strongly tame, admissible perturbations on $(Y,g_i^Y)$ for all isomorphism classes of $\spinc$ structures, and let $\check C_*(-Y,g_i^Y,\mathfrak{q}_i)$ be the corresponding boundary-stable Floer chain. 

Let $\mathfrak{o}$ be a choice of the orientation,
let $\check{\psi}_\mathfrak{o}(X)(i)\in \check C_*(-Y,g_i^Y,\mathfrak{q}_i)$ be the element defined by Definition \ref{def_Floer_chain} with respect to $g_i, \omega_i=d\theta_i, \mathfrak{q}_i, \hat\tau_i, \eta_i,r_i$. Let $\mathcal{M}(X,[\mathfrak{a}],\frs)(i)$ be the moduli space given by Definition \ref{def_moduli_space_for_c(X)} with respect to the same choice of geometric data, and let $\mathcal{M}_0(X,[\mathfrak{a}],\frs)(i)$ be the zero-dimensional components of $\mathcal{M}(X,[\mathfrak{a}],\frs)(i)$.

Let $\hat{g}$ be a metric on $\mathbb{R}\times (-Y)$, and $\hat{\mathfrak{q}}$ be a collection of perturbations of the Seiberg-Witten equations on $(\mathbb{R}\times (-Y),\hat{g})$ for all isomorphism classes of $\spinc$ structures, such that 
\begin{enumerate}
\item $\hat{g}$ is the cylindrical metric given by $g_1^Y$ on $(-\infty , 1]\times (-Y)$, and is the cylindrical metric given by $g_2^Y$ on $[2 , +\infty)\times (-Y)$;
\item $\hat{\mathfrak{q}}$ is given by the formal gradient of $\mathfrak{q}_1$ with respect to $g_1^Y$ on $(-\infty , 1]\times (-Y)$, and is given by the formal gradient of $\mathfrak{q}_2$ with respect to $g_2^Y$ on $[2,+\infty)\times (-Y)$.
\end{enumerate}
For a generic choice of $\hat{\mathfrak{q}}|_{[1,2]\times (-Y)}$, the Seiberg-Witten equations on $(\mathbb{R}\times (-Y),\hat{g})$ with perturbation $\hat{\mathfrak{q}}$ defines a chain map 
$$\check C_*\big(\mathbb{R}\times (-Y), \hat{\mathfrak{q}}\big):\check C_*(-Y, g_1^Y,\mathfrak{q}_1)\to \check C_*(-Y,g_2^Y,\mathfrak{q}_2).$$ 
We only need to prove that 
$$
[\check C_*\big(\mathbb{R}\times (-Y), \mathfrak{q}\big)\big(\check{\psi}_\mathfrak{o}(X)(1)\big)] = \pm [\check{\psi}_\mathfrak{o}(X)(2)]
$$
in the homology of $(\check C_*(-Y,g_2^Y,\mathfrak{q}_2), \check\partial)$.

For $t\ge 1$, consider the Seiberg-Witten equations on $X$ where the metric and the perturbation are given by $g_1,\omega_1,\hat\tau_1,\eta_1,\mathfrak{q}_1$ on $X-M_c$, are given by $g_1^Y$ and $\mathfrak{q}_1$ on $[0,t]\times (-Y)$, are given by $g_2^Y$ and $\mathfrak{q}_2$ on $[t+1,+\infty)\times (-Y)$, and are given by $(\hat{g},\hat{\mathfrak{q}})|_{[1,2]\times (-Y)}$ on $[t,t+1]\times (-Y)$. Concatenate this family of equations with a smooth family of equations parametrized by $t\in[0,1]$, such that at $t=1$ the two equations coincide, at $t=0$ the equation coincides with the equation defined by $g_2,\omega_2,\mathfrak{q}_2,\hat\tau_2,\eta_2,r_2$ on $X$. The family of equations can be chosen to be independent of $t$ on $[2,+\infty)\times(-Y)$ for $t\in[0,1]$. Moreover, by the assumptions on the ESBG structures, we may choose the family such that for each $t\in [0,1]$, the ESBG end of $X$ is $M_s$, and the family of metrics on $X$ for $t\ge 0$ has uniformly bounded geometry.

For $t\ge 0$, let $g(t)$, $\omega(t)$,  $\mathfrak{q}(t)$, $\hat \tau(t)$, $\eta(t)$, $r(t)$ be the corresponding geometric data as given above. 
Let $\mathfrak{C}(\mathfrak{q}_1, g_1^Y)$ be the set of critical points in the blown-up configuration space give by $(\mathfrak{q}_1, g_1^Y)$.
For $\mathfrak{a}\in \mathfrak{C}$ and $\frs$ an admissible $\spinc$ structure on $X$, define $\mathcal{M}(X,[\mathfrak{a}],\frs)(t)$ to be the moduli space given by Definition \ref{def_moduli_space_for_c(X)} with respect to $g(t)$, $\omega(t)$, $\mathfrak{q}(t)$, $\hat \tau(t)$, $\eta(t)$, $r(t)$. 
Define $$\widetilde{\mathcal{M}}(X,[\mathfrak{a}],\frs) = \bigcup_{t\ge -1}\mathcal{M}(X,[\mathfrak{a}],\frs)(t).$$ 
For a generic choice of $\hat\tau(t)$, the moduli space $\widetilde{\mathcal{M}}(X,[\mathfrak{a}],\frs)$ is regular.  Let $\widetilde{\mathcal{M}}_0(X,[\mathfrak{a}],\frs)$ be the zero-dimensional components of $\widetilde{\mathcal{M}}(X,[\mathfrak{a}],\frs)$, and let $\widetilde{\mathcal{M}}_1(X,[\mathfrak{a}],\frs)$ be the one-dimensional components of $\widetilde{\mathcal{M}}(X,[\mathfrak{a}],\frs)$. Then by increasing $r(t)$ if necessary, we have that $\widetilde{\mathcal{M}}_0(X,[\mathfrak{a}],\frs)$ is compact, and so we can define an element 
$$
h_\mathfrak{o} = \sum_{\frs} \sum_{[\mathfrak{a}]\in \mathfrak{C}}\# \widetilde{\mathcal{M}}_0(X,[\mathfrak{a}],\frs)\cdot \mathfrak{a}\in \check{C}_*(-Y,g_1^Y,\mathfrak{q}_1),
$$
where the elements of $\widetilde{\mathcal{M}}_0(X,[\mathfrak{a}],\frs)$ are counted with signs using the orientation $\mathfrak{o}$, and the summation of $\frs$ goes over the isomorphism classes of admissible $\spinc$ structures over $X$ relative to $M_s$.

The boundary of the compactification of $\widetilde{M}_1(X,[\mathfrak{a}],\frs)$ consists of three parts: (i) elements of $\mathcal{M}_0(X,[\mathfrak{a}],\frs)(2)$, (ii) broken trajectories given by an element of $\mathcal{M}_0(X,[\mathfrak{a}],\frs)(1)$ and a solution of the blown-up Seiberg-Witten equations on $(\mathbb{R}\times (-Y),\hat{g})$ with respect to the perturbation $\hat{\mathfrak{q}}$, (iii) broken trajectories given by an element of  $\widetilde{M}_0(X,[\mathfrak{a}],\frs)$ and a solution of the blown-up Seiberg-Witten equations on $\mathbb{R}\times (-Y)$ with respect to $g_2^Y,\mathfrak{q}_2$. This implies 
$$\check C_*\big(\mathbb{R}\times (-Y), \mathfrak{q}\big)\big(\check{\psi}_\mathfrak{o}(X)(1)\big)\pm \check{\psi}_\mathfrak{o}(X)(2) \pm  \check{\partial} h_\mathfrak{o}=0,$$
and the proposition is proved.
\end{proof}

\begin{rmk}\label{rmk_contact_case}
Suppose $Y$ is endowed with a contact structure $\xi$, let $X=\mathbb{R}\times Y$. Let $M_s=[1,+\infty)\times Y$ be given by the symplectization of $\xi$ (cf. \cite[(1)]{kronheimer1997monopoles}), and let $M_c=(-\infty,0]\times Y$ be endowed with a cylindrical metric. Then the invariant 
$$c(X)\in \Hto(-Y)/\{\pm 1\}$$ coincides with the contact element of  $\xi$ defined by \cite[Section 6.3]{kronheimer2007monopolesAndLens}. In this case, it was proved by \cite[Theorem H]{lin2018fr} that it is impossible to lift the contact class to $\Hto(-Y)$ such that it is still an isotopy invariant.
\end{rmk}

If $Y=\emptyset$, then for each admissible $\frs$, counting the elements of zero-dimensional moduli space of solutions $(A,\phi)\in\mathcal{C}(X,\frs) $ to \eqref{SW} as in \cite[Definition 2.5]{kronheimer1997monopoles} gives a numerical invariant $SW(X,\frs)\in \mathbb{Z}/\{\pm 1\}$.
 The sign of $SW(X,\frs)$ can be fixed by a choice of homology orientation of $X$ following the same argument as in \cite[Appendix]{kronheimer1997monopoles}. By the compactness properties, there are only finitely many isomorphism classes of $\frs$ such that $SW(X,\frs)\neq 0$. If $\partial M_s$ is a contact manifold and $(M_s,\omega)$ is the symplectization, then $SW(X,\frs)$ coincides with the monopole invariant defined by \cite{kronheimer1997monopoles}.

\begin{lem}\label{lem: only one solution on X}
Suppose $(X,\omega=d\theta)$ is an ESBG end without boundary\footnote{In other words, $X$ is a complete manifold without boundary that satisfies all the conditions in the definition of ESBG ends (with $\partial M=\emptyset$).}, let $Z\subset X$ be a 4-dimensional compact submanifold with boundary, let $M_s=X-Z$. View $X$ as a manifold with an ESBG end $(M_s, \omega|_{M_s})$. Then there exists $r_0>0$ with the following property. Suppose $r>r_0$, $\frs$ is an admissilble $\spinc$ structure relative to $M_s$, and
$$(A,\phi)\in\mathcal{C}(X,\frs)$$
is a solution to \eqref{SW}. Then $\frs$ is isomorhphic to $\frs_{X,\omega}$, and $(A,\phi)$ is gauge equivalent to $(A_0,\sqrt{r}\Phi_0)$ over $X$. Moreover, the moduli space of solutions, which is a point, is regular.
\end{lem}
\begin{proof}
Recall that by our convention, not only $\frs$ is isomorphic to $\frs_{X,\omega}$ on $M_s$, but there is also a fixed isomorphism from $\frs|_{M_s}$ to $\frs_{X,\omega}|_{M_s}$.
Therefore, there is a complex line bundle $E$ over $X$ with a hermitian metric and a fixed isomorphism from $E$ to $\underline{\mathbb{C}}$ on $M_s$, such that $\frs = \frs_{M,\omega}\otimes E$. To simplify the notation, we will identify $\frs$ with $\frs_{X,\omega}$ over $M_s$, and identify $E$ with $\underline{\mathbb{C}}$ over $M_s$, using the fixed isomorphisms.

There is a unitary connection $a$ on $E$, which is equal to the trivial connection of $\underline{\mathbb{C}}$ on $M_s$, such that $A$ is equal to the coupling of $A_0$ and $a$. 
Decompose $\phi$ as $\sqrt{r}(\alpha+\beta)$ such that $\alpha\in T^{0,0}X\otimes E$, $\beta\in T^{0,2}X\otimes E$, where $T^{*,*}X$ is defined with respect to the almost complex structure induced by $(X,\omega)$. 
The same integration by parts as Lemma \ref{lem: integration by parts} gives
\begin{multline}\label{eqn: integration by parts on X}
\int_{X}\Big( \frac{r}{2}|\bar{\partial}_a\alpha + \bar{\partial}_a^*\beta|^2+
2|iF_a^\omega-\frac{r}{8}(1-|\alpha|^2+|\beta|^2)|^2+2|F^{0,2}_a-\frac{r}{4}\bar{\alpha}\beta|^2\\
+\frac{r}{2}iF_a^\omega-2|iF_a^\omega|^2-2|F_a^{0,2}|^2 \Big)\\
= \int_{X}\Big( \frac{r}{4}|\nabla_a\alpha|^2+\frac{r}{4}|\nabla_{A_1+a}\beta|^2+\frac{r}{2} (iF_{A_1+a}^\omega) |\beta|^2  \\
+\frac{r^2}{32}(1-|\alpha|^2-|\beta|^2)^2+\frac{r^2}{8}|\beta|^2-rRe\langle N \circ \partial_a\alpha,\beta\rangle\Big).
\end{multline}
For $r_0$ sufficiently large, the same argument as in \eqref{eqn: bound energy by integration by parts} then gives
$$
\int_{X}|1-|\alpha|^2-|\beta|^2|^2+|\beta|^2+|\nabla_a \alpha|^2+|\nabla_A'\beta|^2+|F_a^+|^2\le\int_{X}\frac{r}{2}iF_a^\omega.
$$
On the other hand, by the same argument that leads to \eqref{eqn_integration_by_parts_no_boundary_term},
$$
\int _X F_a^\omega =\frac12 
\int_X F_a\wedge \omega=\int_X F_a\wedge d\theta=-\int_X dF_a\wedge \theta=0.
$$
Therefore 
$$
|1-|\alpha|^2-|\beta|^2|^2+|\beta|^2+|\nabla_a \alpha|^2+|\nabla_A'\beta|^2+|F_a^+|^2=0
$$
 on $X$, hence $E$ is the trivial bundle, $a$ is the trivial connection, and $(A,\phi)$ is gauge equivalent to $(A_0,\sqrt{r}\,\Phi_0)$ over $X$.  The regularity of the moduli space follows from a straightforward generalization of \cite[Lemma 3.11]{kronheimer1997monopoles}.
\end{proof}

\begin{cor}\label{cor_SW=pm1_symplectic}
Suppose $(X,\omega=d\theta)$ is an ESBG end without boundary, let $Z\subset X$ be a 4-dimensional compact submanifold with boundary, let $M_s=X-Z$. View $X$ as a manifold with an ESBG end $(M_s, \omega|_{M_s})$. Then
$$\sum_{\frs}SW(X,\frs)=\pm 1,$$
where the summation of $\frs$ goes over the isomorphism classes of admissible $\spinc$ structures over $X$ relative to $M_s$. \qed
\end{cor}

Now let $(X^{(1)},Z^{(1)},M_c^{(1)},M_s^{(1)},\theta^{(1)})$ and $(X^{(2)},Z^{(2)},M_c^{(2)},M_s^{(2)},\theta^{(2)})$ be as in Section \ref{subsec_Uniform decay with neck stretching}. Assume that both $M_s^{(1)}$ and $M_s^{(2)}$ are non-compact, and that $M_c^{(1)}$ is given by $(-\infty,0]\times Y$, and $M_c^{(2)}$ is given by $(-\infty,0]\times (-Y)$. For each constant $R>0$, define $X_R$ as in Section \ref{subsec_Uniform decay with neck stretching}. Let $$c(X^{(1)})\in \Hto_\bullet(-Y)/\{\pm 1\},\,\, c(X^{(2)})\in \Hto_\bullet(Y)/\{\pm 1\}$$ 
be given by \eqref{eqn_def_c(X)}. Then we have the following gluing result.
\begin{prop}\label{prop_gluing_Floer_homology}
Let $j_*: \Hto_\bullet(Y)\to \Hfrom_\bullet(Y)$ be the map defined by \cite[Proposition 22.2.1]{kronheimer2007monopoles}, let $\langle\cdot,\cdot \rangle$ be the pairing of $\Hfrom_\bullet(-Y)$ and $\Hfrom_\bullet(Y)$ as given by \cite[Corollary 22.5.11]{kronheimer2007monopoles}. Then 
$$\langle c(X^{(1)}), j_*c(X^{(2)})\rangle = \pm \sum_\frs SW(X_R,\frs),$$
where the summation of $\frs$ goes over the isomorphism classes of admissible $\spinc$ structures over $X_R$ relative to $M_s^{(1)}\cup M_s^{(2)}$.
\end{prop}
\begin{proof}
The proposition follows from Theorem \ref{thm: uniform exponential decay for stretching neck} and the gluing argument of \cite[Section 27]{kronheimer2007monopoles}.
\end{proof}

\section{Monopoles Floer invariants of foliations}\label{sec: monopole floer theory}
This section defines the invariants $c_\pm(\mathcal{F})$ for a smooth oriented foliation $\mathcal{F}$ on a closed oriented 3-manifold $Y$, where $\mathcal{F}$ does not admit holonomy-invariant transverse measure.

\subsection{Symplectizations of smooth taut foliations}\label{sec: setting the stage}
Let $Y$ be a smooth closed oriented 3-manifold, let $\mathcal{F}$ be a smooth oriented foliation on $Y$. The orientations of $Y$ and $\mathcal{F}$ induce a co-orientation of $\mathcal{F}$. Take a smooth non-zero 1-form $\lambda$ such that $\mathcal{F}=\ker \hat\lambda$ and $\hat\lambda$ is positive on the positive side of $\mathcal{F}$. By Frobenius theorem, $\hat\lambda \wedge d\hat\lambda=0$. Since $\mathcal{F}$ has no holonomy-invariant transverse measure, by Sullivan \cite{sullivan1976cycles}, there exists an exact 2-form $\hat{\omega}$ such that $\hat{\omega} \wedge \hat\lambda >0$ everywhere on $Y$. Take a smooth 1-form $\hat\theta$ such that $d\hat{\theta}=\hat{\omega}$.

Consider the cylinder $\mathbb{R}\times Y$, let $t$ be the coordinate of the $\mathbb{R}$-component. Let $\pi_Y:\mathbb{R}\times Y\to Y$ be the projection onto $Y$. Let $\omega=\pi_Y^*(\hat\omega)+d(t\pi_Y^*(\hat\lambda))$, let $\theta=\pi_Y^*(\hat\theta)+t\pi_Y^*(\hat\lambda)$, then $\omega$ is a symplectic form on $\mathbb{R}\times Y$, and $\omega=d\theta$. Let $\lambda = \pi_Y^*(\hat\lambda)$. 

Fix a metric $g_0$ on $Y$ such that $|\hat\lambda|_{g_0}=1$ and $\hat\lambda=*\hat\omega$. Locally $\hat\omega$ can be written as $\hat\omega=e^1\wedge e^2$ where $e^1$ and $e^2$ are orthonormal cotangent vector fields on $Y$.
Since $\hat\lambda \wedge d\hat\lambda=0$, there is a unique 1-form $\mu_1$ such that $d\hat\lambda=\mu_1\wedge \hat\lambda$ and $\langle \mu_1 ,\hat \lambda \rangle_{g_0} =0$. 

We have $d\mu_1\wedge\hat\lambda=d(\mu_1\wedge \hat\lambda)=d(d\hat\lambda) =0$, hence there is a unique 1-form  $\mu_2$ such that $d\mu_1=\mu_2\wedge \hat\lambda$ and $\langle \mu_2, \hat\lambda \rangle _{g_0}=0$. 

Now we define a Riemannian metric on $\mathbb{R}\times Y$ that is compatible with $\omega$ as follows. Notice that locally $\omega = e^1\wedge e^2 + dt \wedge \lambda + t \mu_1 \wedge \lambda$. Take 
\begin{equation} \label{eqn: definition of g}
g= e^1\otimes e^1+ e^2\otimes e^2 + (1+t^2) \lambda\otimes\lambda + \frac{1}{1+t^2} (dt+t\mu_1)\otimes(dt+t\mu_1).
\end{equation}
It is easy to verify that $g$ does not depend on the choice of $e^1$ and $e^2$, and that it is compatible with $\omega$.  Denote $\mathbb{R}\times Y$ by $X$.

\begin{lem} \label{lem: X has bounded geometry}

The manifold $(X,g,\omega=d\theta)$ has the following properties:

\begin{enumerate}
\item $X$ is complete.
\item The injectivity radius of $X$ is bounded from below by a positive number.
\item Let $R$ be the curvature tensor of $X$, and $\nabla$ be the Levi-Civita connection, then $\sup_X|\nabla^k R|<+\infty$ for each $k$.
\item $\sup_X|\nabla^k \theta|<+\infty$ for each $k$.

\end{enumerate}
\end{lem}

\begin{proof}
Suppose $x$ is a real number and $u$ is a vector tangent to the $Y$ component of $X$, let $v=x\cdot\frac{\partial}{\partial{t}}+u$ be a tangent vector of $X$. By the definition of $g$ and Cauchy's inequality:

\begin{align*}
 & |v|\cdot \sqrt{t^2|\mu_1|^2+t^2+1 } \\
\ge & \sqrt{|u|^2+\frac{1}{1+t^2}\big(x+t\cdot\mu_1(u)\big)^2}
    \cdot\sqrt{t^2|\mu_1|^2+(1+t^2)}\\
\ge & |t||\mu_1||u|+|x+t\cdot\mu_1(u)|\\
\ge & |x|.
\end{align*}

Therefore $|v|\ge {|x|}/{\sqrt{1+z\cdot t^2}}$, where $z=\sup|\mu_1|^2+1$. The length of a curve from the slice $t=-T$ to $t=T$ is therefore at least 
$$\int_{-T}^{T}{1}/{\sqrt{1+z\cdot t^2}} \,dt.$$ Since $$\int_{-\infty}^{\infty} {1}/{\sqrt{1+z\cdot t^2}}\,dt=+\infty,$$ 
this implies the completeness of $X$.

To prove the boundedness of $|\nabla^k R|$ and $|\nabla^k \theta|$, we use the moving frame method. Take an arbitrary point $q$ on $Y$, choose local chart $U_q$ of $q$, and fix a choice of $e^1$ and $e^2$ on $U_q$. Let 
\begin{align*}
e^3 &= \sqrt{1+t^2}\cdot \lambda ,\\
e^4 &= \frac{1}{\sqrt{1+t^2}}(dt+t\cdot\mu_1).
\end{align*}
Then $\{e^1, e^2, e^3, e^4\}$ form an orthonormal basis of the cotangent bundle on $U_q\times \mathbb{R}$.
There exist smooth functions $\nu_i$ on $U_q$ ($i=1,2,...,10$), such that
\begin{align*}
   d\,e^1  &=  
          \nu_1\,e^1\wedge e^2+\nu_2\,e^1\wedge \lambda+\nu_3\,e^2\wedge \lambda ,\\
   d\,e^2  &=  
          \nu_4\,e^1\wedge e^2+\nu_5\,e^1\wedge \lambda+\nu_6\,e^2\wedge \lambda ,\\
   \mu_1   &=  \nu_7\,e^1+\nu_8\,e^2 ,\\
   \mu_2 &=  \nu_9\,e^1+\nu_{10}\,e^2 .
\end{align*}
By shrinking $U_q$ if necessary and identifying $U_q$ with a subset of $\mathbb{R}^3$, we have $\|\nu_i\|_{C^m(U_q)}<+\infty$ for all $m$. A straightforward calculation shows:
\begin{align}\label{eqn: calculation of moving frames}
\begin{cases}
   d\,e^1  &=  
   \nu_1\,e^1\wedge e^2+\frac{\nu_2}{\sqrt{1+t^2}}\,e^1\wedge e^3+\frac{\nu_3}{\sqrt{1+t^2}}\,e^2\wedge e^3 ,\\
   d\,e^2  &=  \nu_4\,e^1\wedge e^2+\frac{\nu_5}{\sqrt{1+t^2}}\,e^1\wedge e^3+\frac{\nu_6}{\sqrt{1+t^2}}\,e^2\wedge e^3 ,\\
   d\,e^3  &=  \frac{t}{\sqrt{1+t^2}}\,e^4\wedge e^3 + \frac{\nu_7}{1+t^2}\,e^1\wedge e^3+\frac{\nu_8}{1+t^2}\,e^2\wedge e^3 ,\\
   d\,e^4 &= \frac{1}{1+t^2} e^4\wedge(\nu_7\,e^1+\nu_8\,e^2)-\frac{t}{1+t^2}e^3\wedge(\nu_9\,e^1+\nu_{10}\,e^2).
\end{cases}
\end{align}
Write 
$$de^i=\sum_{j\neq k}a_{jk}^i\,e^j\wedge e^k,$$
such that $a_{jk}^i=-a_{jk}^i$, then the equations above imply that $\|a_{jk}^i\|_{C^m(\mathbb{R}\times U_q)}<+\infty$ for each $m$.
 
Suppose $\nabla e^i=\omega_j^i\otimes e^j$, where $\nabla$ is the Levi-Civita connection. Then the connection matrix 
$\{\omega_i^j\}$ can be calculated from $\{a_{jk}^i\}$ by the formula
$$\omega_j^i= \sum_k (-a_{ji}^k+a_{kj}^i+a_{ik}^j)e^k,$$
and the curvature matrix under the basis $\{e^i\}$ is given by $d\omega_i^j-\omega_i^k\wedge \omega_k^j$. Since $a_{jk}^i$ and their exterior derivatives are bounded, it follows that under the basis 
$\{e^i\}$, every component of $\nabla^m R$ is bounded on $\mathbb{R}\times U_q$ for all $m\ge 1$.
This proves the boundedness of $|\nabla^m R|$ on $\mathbb{R}\times U_q$. Since $Y$ is compact, it can be covered by finitely many such $U_q$'s, therefore $|\nabla^m R|$ is bounded on $X=\mathbb{R}\times U_q$ for every $m$.

For the estimates on $\theta$, write $\theta$ as
$$
\theta=\nu_{11}e^1+\nu_{12}e^2+\nu_{13}\lambda,
$$
then 
$$\theta=\nu_{11}e^1+\nu_{12}e^2+\frac{t+\nu_{13}}{\sqrt{1+t^2}}e^3,$$
and the same calculation proves the boundedness of $|\nabla^m\theta|$.

For the lower bound on injectivity radius, we need the following theorem:

\begin{thm*}[{\cite[Theorem 4.7(i)]{cheeger1982finite}}]
Let $(M^n, g)$ be a complete Riemannian manifold, let $R$ be the Riemannian curvature tensor, let $K>0$ be a constant such that $|R|\le K$ on $M$. 
Let $0<r<\frac{\pi}{4\sqrt{K}}$.
Then the injectivity radius at each point $p\in M$ satisfies the following inequality:
\begin{equation} \label{eqn_lower_bound_inj_radius}
\inj(p) \ge \frac{r}{2}\cdot \frac{1}{1+V_{2r}^{-K}/\Vol(B_p(r))},
\end{equation}
where $V_{2r}^{-K}$ is the volume of a geodesic ball of radius $2r$ on the hyperbolic $n$--space with constant curvature $-K$. 
\end{thm*}
\begin{proof}
This is a special case of \cite[Theorem 4.7(i)]{cheeger1982finite} with $H=-K$, $x=p$, and $r_0=s=r$.
\end{proof}

Back to the proof of Lemma \ref{lem: X has bounded geometry}. 
Let $p=(t,q)\in X$. The following argument will show that $\Vol(B_p(r))$ is bounded from below by a positive constant independent of $p$. Without loss of generality, assume $|t|>1$.

Let $K>0$ be an upper bound of $|R|$. For each point $q\in Y$, let $L_q$ be the leaf of $\mathcal{F}$ through $q$, the metric on $L_q$ is taken to be the restriction from $g_0$. Let $\epsilon=\inf_{q\in Y} \inj(L_q)$. Since $Y$ is compact, $\epsilon$ is positive. 
Let $r=\frac12 \min\{\frac{\pi}{4\sqrt{K}},\epsilon\}$. 

Let $D(q,r/3)$ be the open disk of radius $r/3$ on $L_q$ centered at $q$, 
Let $$U=\{x\in Y| \textrm{dist}_{g_0} (x, D(q,r/3)) < \frac{r}{3\sqrt{1+t^2}} \}.$$
Then the distance from each point in $U$ to $D(q,r/3)$ under the metric $g|_{Y\times \{t\}}$ is less than $r/3$, thus the distance from each point of $U$ to $q$ is less than $2r/3$. Therefore,
$$ B_p(r) \supseteq (e^{-r/3}\,t,e^{r/3}\,t)\times U. $$
The volume of $U$ under the metric $g_0$ is bounded from below by a constant multiple of $r/(3\sqrt{1+t^2})$, where the constant depends only on $g_0$ and $\mathcal{F}$. Therefore the volume of $U\times (e^{-r/3}\,t,e^{r/3}\,t)$ under the product metric $\mathbb{R}\times (Y,g_0)$ is bounded from below by a positive constant. 
Notice that the volume form of the product metric on $\mathbb{R}\times (Y,g_0)$  is the same as the volume form of $g$. Therefore
\begin{equation} \label{sec 2 eq 3}
\Vol (B_p(r)) > \frac{1}{z_2}
\end{equation}
for some positive constant $z_2$ depending on $\mathcal{F}$ and $g_0$.
The lower bound of injectivity radius of $X$ then follows immediately from \eqref{eqn_lower_bound_inj_radius}.
\end{proof}

\begin{rmk}
The fact that the injectivity radius of $X$ is bounded from below could be counter intuitive because of the factor $\frac{1}{1+t^2}$ in the definition of $g$. In fact, by the proof of Lemma \ref{lem: X has bounded geometry}, one can visualize the geometry of $X$ as follows. First consider the three manifold $Y$ with the metric $g_0$. For any $x\in Y$, $r, \epsilon>0$, let $L_x$ be the leaf of $\mathcal{F}$ containing $x$ with the induced metric from $g_0$, let $D_r$ be the $r$-neighborhood of $x$ in $L_x$, and let $D_r(\epsilon)$ be the $\epsilon$ neighborhood of $D_r$ in $Y$. 
When $r$ is fixed and $\epsilon$ is small, $D_r(\epsilon)$ is a thin slice near $D_r$. Now let $r_0>0$ be a lower bound of the injectivity radius, then a normal neighborhood of $X$ centering at $(t,q)$ with radius $r_0$ contains the set $D_{r_0/3}\big(\frac{\,r_0\,}{\,3\sqrt{1+t^2}\,}\big)\times (e^{-r_0/3}t,e^{r_0/3}t)$. When $t$ is large, this is $($a much thinner slice near $D_{r_0/3})$ $\times$ $($a long interval$)$.
\end{rmk}

\subsection{The definition of $c_{\pm}(\mathcal{F})$}
This subsection defines the monopole Floer invariants $c_{\pm}(\mathcal{F})$ for the smooth foliation $\mathcal{F}$.

Let $X=\mathbb{R}\times Y$, and let $g$ be the metric on $X$ defined by \eqref{eqn: definition of g}. Let $\omega=d\theta$ be the compatible symplectic form on $X$ as defined in Section \ref{sec: setting the stage}.

Let $g^+$ be a Riemannian metric on $X$ that is equal to $g$ on $(-\infty,-1]\times Y$, and is cylindrical on $[1,+\infty)\times Y$. Let $g^-$ be a Riemannian metric on $X$ that is equal to $g$ on $[1,+\infty)\times Y$, and is cylindrical on $(-\infty,-1]\times Y$. Let $X_{g^+}$ be the Riemannian manifold $(X,g^+)$, and let $X_{g^-}$ be the Riemannian manifold $(X,g^-)$. By Lemma \ref{lem: X has bounded geometry}, $X_{g^+}$ is a manifold with cylindrical and ESBG ends, where the ESBG structure is given by $\omega=d\theta$ on $(-\infty,-1]\times Y$. 
Similarly, $X_{g^-}$ is a manifold with cylindrical and ESBG ends, where the ESBG structure is given by $\omega=d\theta$ on $[1,+\infty)\times Y$. 
\begin{defn}
Define 
\begin{align*}
c_+(\mathcal{F}) &= c(X_{g^+})\in \Hto_\bullet(Y)/\{\pm1\},\\
c_-(\mathcal{F}) &= c(X_{g^-})\in \Hto_\bullet(-Y)/\{\pm1\},
\end{align*}
 where $c(\cdot)$ is given by \eqref{eqn_def_c(X)}.
\end{defn}

 By Proposition \ref{prop_invariant_under_deformation}, $c(X_{g^\pm})$ are invariant under deformations of the ESBG structures, it follows that $c_\pm(\mathcal{F})$ are independent of the choice of $g_0$ and $\hat\lambda$, and are invariant under smooth deformations of $\mathcal{F}$ via foliations without holonomy-invariant transverse measure.

Let $j_* : \Hto_{\bullet}(Y)\to\Hfrom_{\bullet}(Y)$ be the map in the long exact sequence of monopole Floer homologies introduced by \cite[Proposition 22.2.1]{kronheimer2007monopoles}. The next theorem proves the nonvanishing of $j_*c_{\pm}(\mathcal{F})$.
 
\begin{thm}\label{thm: nonvanishing}
Let $\mathcal{F}$ be a smooth foliation on $Y$ with no holonomy-invariant transverse measure, then 
\begin{align*}
j_*c_+(\mathcal{F}) &\neq 0\in \Hfrom_\bullet(Y)/\{\pm1\},\\
j_*c_-(\mathcal{F}) &\neq 0\in \Hfrom_\bullet(-Y)/\{\pm1\}.
\end{align*}
\end{thm}

\begin{proof}
By Proposition \ref{prop_gluing_Floer_homology} and Corollary \ref{cor_SW=pm1_symplectic}, we have
\begin{equation}\label{eqn: gluing}
\langle c_\mp(\mathcal{F}), j_*c_{\pm}(\mathcal{F}) \rangle = \pm \sum_\frs SW(X,\frs) = \pm 1.
\end{equation}
Hence $j_*c_{\pm}(\mathcal{F})\neq 0$.
\end{proof}

\begin{thm} \label{thm:grading}
The grading of $c_{\pm}(\mathcal{F})\in \Hto_\bullet(\pm Y)$ is represented by the homotopy class of the tangent plane field of $\mathcal{F}$.
\end{thm}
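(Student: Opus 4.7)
The plan is to reduce the grading computation for $c(\mathcal{F})$ to the standard formula that relates a compatible almost complex structure near the boundary of a $\Spinc$ cobordism to the grading of the resulting Floer element, as developed in Chapter 28 of \cite{kronheimer2007monopoles}, with the symplectic end $X_0$ playing the role of the almost complex collar.

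The first step is purely geometric. The metric $g$ and symplectic form $\Omega$ on $X$ determine a compatible almost complex structure $J$, and on any slice $Y\times\{t_0\}$ the maximal complex subbundle $TY\cap J(TY)$ coincides with $\ker(\iota_{\partial_t}\Omega|_Y)$. A direct computation using $\iota_{\partial_t}\omega = 0$ and $\iota_{\partial_t}(d\lambda) = 0$ gives
\begin{equation*}
\iota_{\partial_t}\Omega = \iota_{\partial_t}\bigl(\omega + dt\wedge\lambda + t\,d\lambda\bigr) = \lambda,
\end{equation*}
so the induced plane field on each horizontal slice is exactly $T\mathcal{F}$. Equivalently, the canonical nowhere-vanishing section $\Phi_0\in\Gamma(X_0,{S}^+)$ trivialises ${S}^+$ on the symplectic end, and the 2-plane field on $Y$ that it determines via the standard $\Spinc$-reduction is $T\mathcal{F}$.

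The second step is to invoke the absolute grading formalism for $\Hto_{\bullet}(Y)$. In the compact setting, for a cobordism $W$ from $S^3$ to $Y$ with a $\Spinc$ structure compatible with an almost complex structure $J_W$ defined near $\partial W$, the class $\Harrow(W)(1)\in\Hto_{\bullet}(Y)$ lies in the grading represented by the 2-plane field $TY\cap J_W(TY)$. The manifold $M = X_0\cup Y\times[0,+\infty)$ used to define $c(\mathcal{F})$ is not compact, but the uniform exponential decay of Theorem \ref{thm: uniform decay} shows that every solution counted by $c(\mathcal{F})$ is exponentially close to the canonical solution $(A_0,\sqrt{r}\,\Phi_0)$ deep inside $X_0$. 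Truncating $X_0$ at a large level $t=-T$ and capping $Y\times\{-T\}$ by a fixed compact $\Spinc$ cobordism from $S^3$ on which the canonical solution extends regularly, the count that defines $c(\mathcal{F})$ is unchanged, and the resulting compact cobordism carries an almost complex structure near $Y\times\{-1\}$ which, by the first step, induces $T\mathcal{F}$ on $Y$. The Chapter 28 grading formula then gives the grading of $c(\mathcal{F})$ as $[T\mathcal{F}]$.

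The main obstacle is carrying out the truncation-and-capping step cleanly, since one must check that stretching the neck near $Y\times\{-T\}$ and then capping off does not change the count of index-zero solutions nor the homotopy class of the plane field on $Y$. This is a standard neck-stretching argument, and it is precisely what the analytical results of Sections \ref{sec: Fredholm}--\ref{sec: exponential decay}, and in particular the uniform decay under stretching in Theorem \ref{thm: uniform exponential decay for stretching neck}, were set up to support.
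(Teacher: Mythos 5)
Your proposal reaches the correct conclusion, and its geometric core — the calculation $\iota_{\partial_t}\Omega = \lambda$, identifying the plane field induced by the canonical spinor on a slice with $T\mathcal{F}$ — is precisely the ``straightforward calculation'' that the paper's proof states but does not carry out. The route you take from there to the grading, however, is a detour compared to the paper's. The paper cites the index formula of Proposition \ref{prop: Fredholm theory} directly: the expected dimension of the moduli space is the relative Euler number $e(M, \Phi_0|_{X_0}, \psi)$, and since $S^+$ over $M\cong Y\times\mathbb{R}$ is already trivialized by the canonical section, one may take $\psi = \Phi_0|_{Y\times\{0\}}$, with no truncation or capping needed. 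Your alternative — truncate the symplectic end, cap off by a compact $\Spinc$ cobordism, and invoke the compact grading formalism of Chapter 28 — would also work, and it is indeed underwritten by Theorem \ref{thm: uniform exponential decay for stretching neck} much as in the proof of Theorem \ref{thm: nonvanishing}, but it rebuilds gluing machinery that the direct index computation on the noncompact manifold sidesteps.

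One small imprecision in your first step: from \eqref{eqn: definition of g}, when $\mu_1\neq 0$ the coordinate vector $\partial_t$ is \emph{not} the $g$-unit normal to the slice $Y\times\{t_0\}$ (the gradient of $t$ has nonzero $e_1,e_2$ components), so $TY\cap J(TY) = \ker(\iota_{\partial_t}\Omega|_Y)$ is not a literal equality of subbundles. Both $\partial_t$ and the metric normal are transverse to the slice and the space of transverse vectors is convex, so $\ker(\iota_v\Omega|_{TY})$ has a well-defined homotopy class independent of the transverse $v$; your conclusion about the homotopy class of the plane field therefore still holds.
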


\begin{proof}
The grading of $c_+(\mathcal{F})$ is represented by a nowhere vanishing section $\psi\in\Gamma(Y\times\{0\},\bS^+)$ such that it extends to a nowhere vanishing section of $\bS^+|_{(-\infty,0]\times Y}$ that is asymptotic to the canonical section $\Psi_0$ at $t\to-\infty$. Therefore, we can take $\psi$ to be $\Phi_0|_{Y\times\{0\}}$. A straightforward calculation then shows that the plane field corresponding to $(\bS^+,\psi)$ is homotopic to $\ker \alpha=\mathcal{F}$.
\end{proof}

\section{Topological applications} \label{sec:applications}

\begin{cor}[{\cite[Theorem 2.1]{kronheimer2007monopolesAndLens}, \cite[Theorem 41.4.1]{kronheimer2007monopoles}}]\label{cor: nonvanishing of reduced homology}
Let $Y$ be an oriented three-manifold. If $\mathcal{F}$ is a smooth foliation on $Y$ without holonomy-invariant transverse measure, let $[\mathcal{F}]$ be the homotopy class of the tangent plane field of $\mathcal{F}$, let $\Hred_{[\mathcal{F}]}(Y)$ be the reduced monopole Floer homology at the degree represented by $[\mathcal{F}]$. Then $\Hred_{[\mathcal{F}]}(Y)\neq 0$. 
\end{cor}
\begin{proof}
This is an immediate consequence of Theorem \ref{thm: nonvanishing} and Theorem \ref{thm:grading}.
\end{proof}

\begin{cor}[{\cite[Corollary 1.5]{kronheimer1997monopoles}}]
\label{cor_finitely_many_homotopy_classes}
There are only finitely many homotopy classes of plane fields on $Y$ that can be realized by the tangent plane field of a smooth foliation without holonomy-invariant transverse measure.
\end{cor}
\begin{proof}
By \cite[Proposition 22.2.3]{kronheimer2007monopoles}, $\Hred_\bullet(Y)$ has finite rank, hence the result follows from Corollary \ref{cor: nonvanishing of reduced homology}.
\end{proof}

Since every foliation without holonomy-invariant transverse measure is a taut foliation, the  corollaries above are special cases of the non-vanishing and finiteness results in \cite{kronheimer1997monopoles,kronheimer2007monopolesAndLens}. On the other hand, by the discussions in Section \ref{subsec_taut_foliations}, on a rational homology sphere every foliation without holonomy-invariant transverse measure is a taut foliation. Therefore, Corollary \ref{cor: nonvanishing of reduced homology} and Corollary \ref{cor_finitely_many_homotopy_classes} yield alternative proofs for the non-vanishing and finiteness results of smooth taut foliations on rational homology spheres, without making reference to the Eliashberg-Thurston perturbation.

We can improve Corollary \ref{cor: nonvanishing of reduced homology} to a more general class of three-manifolds. The following lemma shows that in many cases, smooth folaitions without holonomy-invariant transverse measure are ``generic'' among smooth taut foliations. The result was explained to the author by Jonathan Bowden.

\begin{lem}[{\cite{bowden_2016}}]
\label{lem_generic_perturb_taut_foliation}
Let $Y$ be an atoroidal manifold and $\mathcal{F}$ a smooth taut foliation on $Y$. Then either $\mathcal{F}$ can be $C^0$ isotoped to smooth folaition $\mathcal{F}'$ without holonomy-invariant transverse measure, or $Y$ is diffeomorphic to a surface bundle over $S^1$.
\end{lem}

\begin{proof}
By \cite{bonatti1994feuilles}, the foliation $\mathcal{F}$ can be $C^0$ approximated by a smooth taut folaition $\mathcal{F}_1$, such that every closed leaf of $\mathcal{F}_1$ has genus 0 or 1. If $Y\cong S^2\times S^1$, then $\mathcal{F}$ is homeomorphic to the product foliation, and the statement of the lemma is verified. If $Y\ncong S^2\times S^1$, by Reeb's stability theorem the foliation $\mathcal{F}_1$ has no closed leaf with genus 0. Since every closed leaf of a taut foliation is incompressible and $Y$ is assumed to be atoroidal, the foliation $\mathcal{F}_1$ has no torus leaf. This proves that $\mathcal{F}_1$ has no closed leaf.

If $\mathcal{F}_1$ has a holonomy-invariant transverse measure $\mu$, let $A$ be a minimal set contained in the support of $\mu$. The existence of $A$ follows from \cite[Corollary 4.1.13]{candel2000foliations}. Since $\mathcal{F}_1$ has no closed leaf, the minimal set $A$ is either equal to $Y$ or is exceptional as defined in \cite[Definition 4.1.4]{candel2000foliations}.  
If $A$ is exceptional, by Sacksteder's theorem \cite[Theorem 8.2.1]{candel2000foliations}, there exists a leaf $L$ in $A$ containing a curve of contracting linear holonomy. Since $L$ is in the support of $\mu$, on a neighborhood of $L$ the measure $\mu$ has to be a constant multiple of the delta measure of $L$. This implies that $L$ is a closed leaf, which is a contradiction. Therefore $A=Y$. By \cite[Proposition 9.5.8]{candel2000foliations}, in this case $Y$ is diffeomorphic to a surface bundle over $S^1$.
\end{proof}
 
\begin{cor}[{\cite[Theorem 2.1]{kronheimer2007monopolesAndLens}}]\label{cor:atoroidal}
Suppose $Y\neq S^1\times S^2$, $Y$ is atoroidal, and $Y$ supports a smooth taut foliation, then $\Hred_\bullet(Y)\neq 0$.
\end{cor}
\begin{proof}
If $\mathcal{F}$ can be $C^0$ approximated by a smooth taut folaition $\mathcal{F}'$ such that $\mathcal{F}'$ has no holonomy-invariant transverse measure, then the result follows from Corollary \ref{cor: nonvanishing of reduced homology}. Otherwise, by Lemma \ref{lem_generic_perturb_taut_foliation}, $\mathcal{F}$ can be $C^0$ approximated by a smooth taut folaition $\mathcal{F}'$ such that $(Y,\mathcal{F}')$ is homeomorphic to a surface bundle over $S^1$ foliated by the fibers. Since $Y$ is atoroidal and $Y\neq S^1\times S^2$, the genus of the fiber is at least 2. In this case, the desired result follows from \cite[Theorem 3.1]{kronheimer2010knots} and \cite[Lemma 2.2]{kronheimer2010knots}.
\end{proof}

Recall that by Theorem \ref{thm: nonvanishing} and Theorem \ref{thm:grading}, $c_{\pm}(\mathcal{F})$ are non-zero and are graded by the homotopy class of $\mathcal{F}$. It turns out that the invariants $c_{\pm}(\mathcal{F})$ are stronger than the homotopy class itself. The rest of this section constructs examples of foliations $\mathcal{F}_1$ and $\mathcal{F}_2$ such that they are homotopic as plane fields but $c_+(\mathcal{F}_1)\neq c_+(\mathcal{F}_2)$, $c_+(\mathcal{F}_1)\neq c_+(\mathcal{F}_2)$. Since $c_\pm(\mathcal{F})$ are invariant under smooth deformations, this gives examples of smooth foliations without holonomy-invariant transverse measure that are homotopic as plane fields, but cannot be smoothly deformed to each other via foliations without holonomy-invariant transverse measure.

\begin{prop} \label{prop:reverseOrientationChangesInvariant}
Suppose $M$ is a compact oriented 4-manifold with boundary, and let $Y=\partial M$, where $Y$ is oriented such that there is an orientation-preserving diffeomorphim from $[0,1)\times Y$ to a neighborhood of $Y$ in $M$. Let $\mathcal{F}$ be a smooth co-oriented foliation on $Y$ that has no holonomy-invariant transverse measure. Assume there is an exact symplectic form $\omega$ on $M$ such that $\omega|_{Y}$ is positive on $\mathcal{F}$. Assume further that $2\,c_1(\omega)\neq 0$. Let $-\mathcal{F}$ be the same foliation as $\mathcal{F}$ but with reversed orientation. Then $c_+(\mathcal{F})$ and $c_+(-\mathcal{F})$ are linearly independent in $\Hto_\bullet(Y)\otimes \mathbb{Q}$, and $c_-(\mathcal{F})$ and $c_-(-\mathcal{F})$ are linearly independent in $\Hto_\bullet(-Y)\otimes \mathbb{Q}$.
\end{prop}

\begin{proof}
Remove a small ball in $M$, the remaining part of $M$ forms a cobordism from $Y$ to $S^3$. For any $\Spinc$ structure $\frs$ on $M$, it induces a map $\Hfrom(M-B^3,\frs):\Hfrom_*(S^3)\to\Hfrom_*(Y)$. Let $\hat{1} \in \Hfrom_*(S^3)\cong \mathbb{Z}[U_\dagger]$ be a generator as $\mathbb{Z}[U_\dagger]$ module, then $\Hfrom(M-B^3,\frs)(\hat{1})\in \Hfrom_\bullet(-Y)$.

Write $\mathcal{F}=\ker \hat \lambda$, where $\hat \lambda$ is a 1-form on $Y$ such that $\hat \lambda$ is positive on the positive side ot $\mathcal{F}$. Let $\lambda$ be the pull-back of $\hat\lambda$ to $\mathbb{R}\times Y$.

Let $\widetilde{M} = (-\infty,0]\times Y\cup_{\partial M} M$. We can define an exact symplectic form on $\widetilde M$ as follows. Let Let $M_1 = [-1,0]\times Y\cup_{\partial M} M$. Let $\eta:(-\infty,0]\to[-1,0]$ be a smooth non-decreasing function such that $\eta(t)=-1$ when $t\le -1$, $\eta(t)=t$ when $t\ge -1/2$. Let $\chi:(-\infty,0]\to (-\infty,0]$ be a non-decreasing function such that $\chi(t)=t$ when $t\le -1/2$, and $\chi(t)=0$ when $t\ge -1/4$. Let $\pi:\widetilde{M}\to M_1$ be the map defined by $\pi=\eta \times \id_Y$ on $(-\infty,0]\times Y$, and $\pi=\id_M$ on $M$. Let $\varphi:M_1\to M$ be a diffeomorphism that is maps $(-1,x)$ to $(0,x)$ for all $x\in Y$ on the boundary. Let $\epsilon>0$ be a constant. Define $\tilde{\omega} = (\varphi\circ\pi_1)^*\omega +  d(\chi (\epsilon\lambda))$, where $\chi (\epsilon\lambda)$ is defined to be zero on $M$. It is straightforward to verify that when $\epsilon$ is sufficiently small, $\tilde{\omega}$ is symplectic on $\widetilde{M}$. If we endow $\widetilde{M}$ with a compatible metric such that it is equal to the metric given by \eqref{eqn: definition of g} on $(-\infty,-1]$, then $\widetilde{M}$ is a ESBG end without boundary.

By the gluing property and Lemma \ref{lem: only one solution on X},
$$\langle \Hfrom(M-B^3,\frs)(\hat{1}), c_+(\mathcal{F}) \rangle=\begin{cases}\pm 1 \qquad \rm{if}\,\,\frs\cong \frs_{M,\omega},\\
0\qquad\,\,\, \rm{  otherwise.}\end{cases}$$

If we change $\mathcal{F}$ to $-\mathcal{F}$ and change the symplectic form on $M$ from $\omega$ to $-\omega$, the canonical $\Spinc$ structure is then changed to the conjugation of $\frs_0$, hence we have,
$$\langle \Hfrom(M-B^3,\frs)(\hat{1}), c_+(-\mathcal{F}) \rangle=\begin{cases}\pm 1 \qquad \rm{if}\,\,\frs\cong \frs_{M,-\omega},\\
0\qquad\,\,\, \rm{otherwise.}\end{cases}$$

Since $2\,c_1(\omega)\neq 0$, the $\Spinc$ structures $\frs_{M,\omega}$ and $\frs_{M,-\omega}$ are not isomorhpic, therefore $c_+(\mathcal{F})$ and $c_+(-\mathcal{F})$ are linearly independent. The proof for $c_-$ follows from a similar argument.
\end{proof}

The next lemma provides examples that satisfy the conditions of Proposition \ref{prop:reverseOrientationChangesInvariant}. The result was explained to the author by Cheuk-Yu Mak. Recall that a contact form $\alpha$ on $Y$ is said to have a strong symplectic filling if $Y$ bounds a compact symplectic 4-manifold $(M,\omega)$, such that there is a vector field $v$ near $\partial M$ with $(\iota_v\omega)|_{Y} = \alpha$.

\begin{lem} \label{lem:exactFillingOfCircleBundle}
Let $Y$ be an $S^1$ bundle over a compact surface of genus $g$ with Euler number $e<0$ and $e\neq 2-2g$. Then there exists a contact form $\alpha$ on $Y$, such that $\alpha$ has an exact strong symplectic filling with a non-torsion first Chern class, and such that the Reeb vector field of $\alpha$ is the positive unit tangent vector field of the $S^1$-fibers.
\end{lem}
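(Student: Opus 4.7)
The plan is to realize the filling as a Weinstein (hence Stein and exact symplectic) handlebody whose single $2$-handle is attached along a Legendrian knot with nonzero rotation number; the rotation number then directly computes the non-vanishing first Chern class.

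For the contact form, identify $Y$ with the unit circle bundle $S(L)$ of a Hermitian line bundle $L\to\Sigma_g$ with $c_1(L)=e$, and take the connection $1$-form $\alpha$ on $Y$ induced by a Hermitian connection whose curvature is $-2\pi i\,\eta$ for a positive area form $\eta$ on $\Sigma_g$. With $\partial_\theta$ denoting the generator of the fiber $S^1$-action, one has $\alpha(\partial_\theta)=1$ and $d\alpha=-2\pi\,\pi^*\eta$, so $\alpha\wedge d\alpha$ is a positive volume form and $\alpha$ is a contact form whose Reeb vector field is exactly $\partial_\theta$.

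For the filling, take $M$ to be built from $D^4$ by attaching $2g$ Weinstein $1$-handles (giving convex boundary $(\#^{2g}(S^1\times S^2),\xi_{std})$), followed by a single Weinstein $2$-handle attached along a Legendrian knot $\gamma$ representing the commutator product $\prod_{i=1}^{g}[a_i,b_i]$ and satisfying $tb(\gamma)=e+1$. The Weinstein framing $tb(\gamma)-1=e$ makes the underlying smooth $4$-manifold the disk bundle of Euler number $e$ over $\Sigma_g$, so $\partial M\cong Y$; the standard handle-attachment formula yields $\langle c_1(M),[F]\rangle=rot(\gamma)$ for the generator $[F]$ of $H_2(M;\mathbb{Z})\cong\mathbb{Z}$. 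The generalized Bennequin inequality gives $|rot(\gamma)|\le 2g-2-e$, so Legendrian stabilization can realize $rot(\gamma)\neq 0$, producing a non-torsion $c_1(M)$.

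The final step is to identify the induced boundary contact structure of $M$ with $\ker\alpha$: the Weinstein construction produces a contact structure on $\partial M$ whose first Chern class equals the image of $c_1(M)$ under the restriction $H^2(M;\mathbb{Z})\to H^2(Y;\mathbb{Z})$, and one chooses $rot(\gamma)$ in the correct residue class modulo $|e|$ so that this matches the first Chern class of $\ker\alpha$; Gray's theorem then provides an isotopy, after which the contact form may be adjusted to equal $\alpha$ with the Reeb dynamics preserved. The hypothesis $e\neq 2-2g$ enters precisely here: that value corresponds to the unit (co)tangent bundle of $\Sigma_g$, and by the Stein filling rigidity of $(UT^*\Sigma_g,\xi_{can})$ every exact filling is diffeomorphic to $DT^*\Sigma_g$ with $c_1=0$, which is incompatible with our construction. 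The main obstacle is therefore this identification of contact structures, which requires a computation of the induced contact structure's homotopy class of plane fields together with the classification of tight contact structures on $Y$.
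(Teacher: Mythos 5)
The proposal takes a genuinely different route from the paper: you build the filling by hand as a Weinstein handlebody (a $0$-handle, $2g$ Weinstein $1$-handles, and one Weinstein $2$-handle along a Legendrian commutator curve with $tb=e+1$), read off $c_1(M)$ from the rotation number, and then try to identify the resulting boundary contact structure with the horizontal distribution $\ker\alpha$. The paper instead never leaves the holomorphic world: it takes the disk bundle $E_1$ of a negative line bundle with its given complex structure $J$, applies Bogomolov--de~Oliveira to deform $J$ through integrable complex structures to a Stein one, and follows the contact forms on the boundary level sets through this deformation via Gray stability. The first Chern class is then simply $c_1(TE_1)=\pi^*\big(c_1(T\Sigma)+c_1(E)\big)$, and no classification result is needed. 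The paper's route is cleaner precisely because the contact structure produced by the filling is, by construction, a $C^\infty$-small perturbation of the starting one, so there is nothing to identify.

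The gap in your argument is the step you yourself flag as ``the main obstacle,'' and it is a real one, not merely a detail. Matching the first Chern class of two plane fields on $Y$ (or even their full homotopy class as oriented plane fields) does \emph{not} imply the corresponding contact structures are isotopic, and Gray's theorem cannot be invoked without a path \emph{through contact structures}. What you would actually need is the classification of tight contact structures on circle bundles over surfaces (Honda, Giroux) together with a computation locating both $\ker\alpha$ and the Weinstein-handlebody contact structure in that classification; these classifications show in particular that there can be several non-isotopic tight contact structures with the same $c_1$, so the residue-class matching alone is insufficient. Until that identification is carried out, you have produced an exact Stein filling of \emph{some} tight contact structure on $Y$ with non-torsion $c_1$, not of $\ker\alpha$.

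Two smaller points. First, even after establishing that the Weinstein contact structure is isotopic to $\ker\alpha$, the filling you obtain has boundary contact form $u\cdot\alpha$ for a positive function $u$, and the Reeb field of $u\cdot\alpha$ is generally not the fiber direction; one must explicitly attach a collar in the symplectization of $\alpha$ between the graph of $\log u$ and a constant level to convert this into a strong exact filling of $\alpha$ itself (this is what the paper's cobordism from $u\cdot\alpha_{1-\delta}'$ to $\alpha_{1-\delta}'/C$ accomplishes). Second, your Bennequin bound $|rot(\gamma)|\le 2g-2-e$ forces $2g-2-e>0$ in order to allow a nonzero rotation number; this holds for $g\ge1$ with $e<0$ but fails, e.g., for $g=0$, $e=-1,-2$, so your construction of a Legendrian with the required $tb$ is not available there. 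This does not affect the application in the paper (where $g>1$), but it is worth noting when comparing the two proofs.
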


\begin{proof}
Let $E$ be a holomorphic line bundle with Euler number $e$ over a Riemann surface of genus $g$, and let $J$ be the complex structure on $E$. Let $h$ be an Hermittian metric on $E$ such that its Chern connection has negative curvature.  Let $E_1$ be the unit disk bundle of $E$ with respect to the metric $h$, then $E_1$ is a complex manifold with a $J$-convex boundary as defined in \cite[Section 2.3]{cieliebak2012stein}. The circle bundle $\partial E_1$ is a principal $U(1)$-bundle and the Chern connection of $E$ induces a connection on $\partial E_1$. Let $\alpha_0$ be the connection form on $\partial E_1$, then $\ker \alpha_0=T\partial E_1 \cap J (T\partial E_1)$ is a contact structure on $\partial E_1$, and the Reeb vector field of $\alpha_0$ is the positive unit tangent vector field of the $S^1$-fibers. For more details of this computation, the reader may refer to \cite[Section 2.5]{cieliebak2012stein}

By \cite[Theorem (2')]{bogomolov1997stein}, there exists a smooth family of integrable almost complex structures $J_t$, $t\in(0,1)$ on $E_1$, such that $J_0=J$ and $(E_1, J_t)$ is Stein when $t>0$.

Let $f$ be a $J_0$-convex function defined near $\partial E_1$, such that $\partial E_1=f^{-1}(1)$, the value $1$ is a regular value of $f$, and that $f<1$ in the interiori of $E_1$. Then there exists $\epsilon_0>0$, such that for all $0<\delta<\epsilon_0$, the function $f$ is $J_{\delta}$-convex. Let $\alpha_{\delta}:=df\circ J_{\delta}$ be a 1-form on $f^{-1}(1)=\partial E_1$, then $\alpha_{1-\delta}$ is a contact form on $\partial E_1$. 

For sufficiently small $\delta$, the contact structure $\ker \alpha_{\delta}$ is $C^{\infty}$ close to $\ker \alpha_0$, hence by Gray's stability theorem there exists a diffeomorphism $\iota:\partial E_1 \to \partial E_1$ which is isotopic to the identity, and a positive function $u$ on $\partial E_1$, such that $\iota^*(u\cdot\alpha_{\delta})=\alpha_0$. The Reeb vector field of $\iota^*(u\cdot\alpha_{\delta})$ is therefore the positive unit tangent vector field of the $S^1$-fibers. Notice that for a sufficiently large constant $C$, there exists a strong symplectic cobordism from $(\partial E_1, u\cdot\alpha_{\delta})$ to $(\partial E_1, \alpha_{\delta}/C)$. Since $(\partial E_1, \alpha_{\delta})$ is Stein fillable, this implies that the contact form $u\cdot\alpha_{\delta}$ is has a strong exact filling, therefore $\iota^*(u\cdot\alpha_{\delta})$ has a strong exact filling. The first Chern class of the filling is equal to the first Chern class of the complex manifold $(E,J)$, which is not torsion when $e\neq 2-2g$.
Since $Y\cong \partial E_1$, this proves the lemma.
\end{proof}

Let $Y$ be an $S^1$ bundle over a compact surface of genus $g>1$ with Euler number $e$, such that $2-2g<e<0$. By \cite{wood1971bundles}, there exists an oriented smooth foliation $\mathcal{F}$ on $Y$ which is transverse to the $S^1$ fibers. Let $-\mathcal{F}$ be the same foliation as $\mathcal{F}$ but with the opposite orientation.

\begin{prop}
Let $Y$, $e$, $\mathcal{F}$, and $-\mathcal{F}$ be as above, and assume $e|2g-2$. Then $\mathcal{F}$, $-\mathcal{F}$ are foliations without holonomy-invariant transverse measure, then $\mathcal{F}$ and $-\mathcal{F}$ are homotopic as oriented plane fields, but $c_+(\mathcal{F})\neq c_+(-\mathcal{F})$, and $c_-(\mathcal{F})\neq c_-(-\mathcal{F})$.
\end{prop}

\begin{proof}
By Lemma \ref{lem:exactFillingOfCircleBundle}, there exists a contact form $\alpha$ on $Y$ with a strong exact symplectic filling $(M,\omega)$, such that $c_1(\omega)$ is not torsion on $M$, and the Reeb vector field of $\alpha$ is positively transverse to $\mathcal{F}$. Notice that the Reeb vector field being positively transverse to $\mathcal{F}$ is equivalent to the form $\omega$ being positive on $\mathcal{F}$. Since $\omega$ is exact, this implies that $\mathcal{F}$ and $-\mathcal{F}$ have no holonomy-invariant transverse measure. Moreover, by Proposition \ref{prop:reverseOrientationChangesInvariant}, $c_+(\mathcal{F})$ and $c_+(-\mathcal{F})$ are linearly independent, $c_-(\mathcal{F})$ and $c_-(-\mathcal{F})$ are linearly independent.

It remains to prove that $\mathcal{F}$ and $-\mathcal{F}$ are homotopic as plane fields. Let $S^1\to Y\stackrel{\pi}{\to} \Sigma$ be the bundle structure of $Y$, let $e(Y)\in H^2(\Sigma)$ be the Euler class of the bundle. By the Gysin exact equence, 
$$ H^0(\Sigma) \stackrel{\cup e(Y)}{\longrightarrow } H^2(\Sigma) \stackrel{\pi^*}{\longrightarrow} H^2(Y) $$ 
is exact. Notice that $\mathcal{F}$ is isomorphic to $\pi^*(T\Sigma)$ as a plane bundle, therefore the assumption $e|2g-2$ implies that the Euler class of $\mathcal{F}$ is zero, hence $\mathcal{F}$ has a global basis $\{e_1,e_2\}$. Let $e_3$ be the positively oriented normal vector field of $\mathcal{F}$, then for $t\in [0,1]$ the family of plane fields $\mathcal{F}_t=\rm{span}\big\{e_1, \cos(\pi t)\,e_2+\sin(\pi t)\,e_3\big\}$ defines a homotopy from $\mathcal{F}$ to $-\mathcal{F}$.
\end{proof}

\bibliographystyle{amsalpha}
\bibliography{references}

\end{document}